\newtheorem{thm}{\textit{Theorem}}
\newtheorem{lem}{\textit{Lemma}}
\newtheorem{rem}{\textit{Remark}}
\newtheorem{cor}{\textit{Corollary}}
\newtheorem{result}{\textit{Result}}
\newtheorem{pro}{\textit{Proposition}}
\newcommand{\Rmnum}[1]{\expandafter\@slowromancap\romannumeral #1@}
\def\dd{\text{d}}
\title{Stochastic Weak Passivity Based Stabilization of Stochastic Systems with Nonvanishing Noise\thanks{This work was supported by
the National Natural Science Foundation of China under Grant No.
11271326 and 61611130124, and the Research Fund for the Doctoral
Program of Higher Education of China under Grant No.
20130101110040.}}
\author{Zhou fang\footnotemark[2]
\and Chuanhou Gao \footnotemark[2]}
\begin{document}
\maketitle
\slugger{mms}{xxxx}{xx}{x}{x--x}%slugger should be set to mms, siap, sicomp, sicon, sidma, sima, simax, sinum, siopt, sisc, or sirev

\renewcommand{\thefootnote}{\fnsymbol{footnote}}

\footnotetext[2]{School of Mathematical Sciences, Zhejiang University,
Hangzhou 310027, P. R. China. (\email{Zhou\_Fang@zju.edu.cn,
gaochou@zju.edu.cn (Correspondence)}).}

\renewcommand{\thefootnote}{\arabic{footnote}}

\begin{abstract}
{For stochastic systems with nonvanishing noise, i.e., at the desired state the noise port does not vanish, it is impossible to achieve the global stability of the desired state in the sense of probability. This bad property also leads to the loss of stochastic passivity at the desired state if a radially unbounded Lyapunov function is expected as the storage function. To characterize a certain (globally) stable behavior for such a class of systems, the stochastic asymptotic weak stability is proposed in this paper which suggests the transition measure of the state to be convergent and the ergodicity. By defining stochastic weak passivity that admits stochastic passivity only outside a ball centered around the desired state but not in the whole state space, we develop stochastic weak passivity theorems to ensure that the stochastic systems with nonvanishing noise can be globally$\backslash$ locally stabilized in weak sense through negative feedback law. Applications are shown to stochastic linear systems and a nonlinear process system, and some simulation are made on the latter further.}
\end{abstract}

\begin{keywords}
Stochastic differential systems, transition
measure, ergodicity, stochastic weak passivity, asymptotic weak stability, stabilization
\end{keywords}

\begin{AMS}
60H10, 62E20, 70K20, 93C10, 93D15, 93E15
\end{AMS}

\section{Introduction}
Stochastic phenomena have emerged universally in
many physical systems due to noise, disturbance and uncertainty. The
unpredictability to them leads to it a great challenge to stabilize
a stochastic system. During the past decades, the stabilization of
nonlinear stochastic systems had constituted one of central problems
in stochastic process control both theoretically and practically. A
great deal of methods emerge as the times require, among which
stochastic passivity based control is a popular one. Rooting in the
passivity theory \cite{Willems72,Desoer75} and the stochastic
version of Lyapunov theorem \cite{Khasminskii11}, the stochastic
passivity theory \cite{Florchinger99} was developed for
stabilization and control of nonlinear stochastic systems. By means
of state feedback laws, the asymptotic stabilization in probability
can be achieved for a stochastic affine system provided some rank
conditions are fulfilled and the unforced stochastic affine system
is Lyapunov stable in probability \cite{Florchinger99}. Following
this study, Lin et. al. \cite{Lin12} explored the relationship
between a stochastic passive system and the corresponding
zero-output system, and further established the global stabilization
results. Parallelizing to the development of stochastic passivity in
theory, Satoh et. al. \cite{Satoh13} applied this methodology to
port-Hamiltonian systems, and the solutions for stabilization of a
large class of nonlinear stochastic systems are thus available.
There are also some reports that stochastic passivity is applied to
$H_\infty$ filtering problem \cite{Zhang06} and controlling
stochastic mechanical systems \cite{Mehra12}.

{Despite the large success achieved, stochastic
passivity based control seems to only work under the condition that the
noise vanishes at the stationary solution (very often being at the
origin) if a radially unbounded Lyapunov function is expected as the storage function. This means that if a stochastic system has nonzero noise port at the stationary solution or has persistent noise port, such a
method may be out of action. One of the aims of this paper is to
derive the necessary conditions that a stochastic system is
stochastically passive, and further give the sufficient conditions
to say a stochastic system losing stochastic passivity.
Equivalently, we prove that there does not exist a radially unbounded Lyapunov
function rendering the stochastic system to be globally
asymptotically stable in probability provided the noise does not
vanish at the desired state. The ubiquitousness of such a class of systems in the mechanical\mbox{\cite{Satoh14,Satoh15}} and biological\mbox{\cite{Rufino-Ferreira12}} fields motivates us to define a kind of novel stability, termed as stochastic asymptotic weak stability, to characterize a certain (globally) stable behavior for them. The stochastic asymptotic weak stability requests the system state to be convergent in distribution and ergodic. The former means the state to evolve within a small region around the desired state in a large probability while the latter ensures that the state evolution almost always take place within this region.}

{On the face of it, the stochastic asymptotic weak stability is somewhat similar to the concept of stochastic bounded stability proposed in\mbox{\cite{Satoh14,Satoh15}} in that a stochastic system with persistent noise is considered for the same purpose. That concept also means that the state will evolve within a bounded
region around a desired state with a large probability which depends on the region radius. Especially when the region radius goes infinite, the probability will be one. However, there is evident difference between these two kinds of stability. Stochastic bounded stability cannot characterize the ergodicity of the state. Namely, once the trajectory of the state runs out of the bounded region with a small probability, the coming evolution will take place in a
larger bounded region to reach a ``new" stochastic bounded stability with a larger probability. In addition, the stochastic asymptotic weak stability is different from stochastic noise-to-state\mbox{\cite{Rufino-Ferreira12,Deng01}} and input-to-state stability\mbox{\cite{Liu08}} too. The latter two
kinds of stability also serves for characterizing the stable behavior of stochastic systems with nonvanishing noise. They describe the convergence of the expectation of the state, for which the transition measure is controlled by
defining a particular function. Comparatively speaking, they say nothing about the ergodicity of the state, and do not mean either that the state must evolve within a small region around the desired state. Therefore, the stochastic asymptotic weak stability is able to provide more details on characterizing the ``stable" evolution of the state.}

In the concept of stochastic asymptotic weak stability, the convergence in distribution describes the evolution trend of the probability distribution of the stochastic system under
consideration. As one may know, for a stochastic system the
probability density function satisfies the Fokker-Planck equation
\cite{Khasminskii11}. Hence, a usual way to achieve convergence in distribution often starts from analyzing the properties of the
solutions of the Fokker-Planck equation, including the existence,
uniqueness and convergence. Based on this equation, Zhu et al.
\cite{Zhu01,Zhu06} studied the exact stationary solution of
distribution density function for stochastic Hamiltonian systems.
Liberzon et. al. \cite{Liberzon2000} developed a feedback controller
to stabilize in distribution a class of nonlinear stochastic systems
for which the steady-state distribution density function can be
solved from the Fokker-Planck equation. In addition, probability
analysis is another way to serve for achieving the weak stability.
Zakai \cite{zakai69} presented a Lyapunov criterion to suggest the
existence of stationary probability distribution and the convergence
of transition probability measure for stochastic systems with
globally Lipschitzian coefficients. Stettner \cite{Stettner94}
pointed out that the strongly feller and irreducible process are
stable in distribution. Khasminskii \cite{Khasminskii60} constructed a
Markov chain to analyze the convergence of the probability
distribution, and further obtained the Markov process to be convergent in distribution \cite{Khasminskii11} if it is ``mix sufficiently well" in an open domain and the recurrent time is finite. The conditions that
renders the recurrent time to be finite give us large inspiration on
developing the stabilizing ways in weak sense for stochastic systems
with nonvanishing noise.

{In this paper, we will show that the recurrent property of a
stochastic system is highly relevant to the stochastic passivity
behavior. Based on this comparison, we define the stochastic
passivity not in the whole state space, but only outside a ball
centered around the desired state, which is labeled as stochastic
weak passivity in the context. Within the framework of stochastic
weak passivity, we do not need to care whether the noise port of a
stochastic system vanishes at the desired state or not. Therefore, it is suited to handle the stabilization issue of stochastic differential systems with nonvanishing noise. Further, we link the stochastic weak passivity with the stochastic asymptotic weak stability, and develop
stabilizing controllers using the stochastic weak passivity to
achieve the asymptotic weak stability of stochastic systems. The sufficient
conditions for global and local asymptotic stabilization in weak
sense are provided by means of negative feedback laws, respectively.}

The rest of the paper is organized as follows. Section $2$ presents
some preliminaries on stochastic passivity. In section $3$, the loss
of stochastic passivity is analyzed and the problem of interest is
formulated. In section $4$, we propose the framework of stochastic
weak passivity theory and make a link between stochastic weak
passivity and asymptotic weak stability. Some basic concepts and the main
results (expressed as two stochastic weak passivity theorem and one
refined version) for stabilizing stochastic systems in weak sense
are given in this section. Section $5$ illustrates the efficiency of
the stochastic weak passivity theory through two application
examples. Finally, section $6$ concludes this paper and makes a
prospect of future research.
%%%%%%%%%%%%%%%%%%%%%%%%%%%%%%%%%%%%%%%%%%%%%%%%%%%%%%%%%%%%%%%%%%%%
\section{Preliminaries of stochastic passivity}
In this section, we will give a birds-eye view of mathematical
systems theory related to stochastic differential systems.

{We begin with a stochastic differential equation
written in the sense of It\^{o}
\begin{eqnarray}{\label{StochasticEquation}}
\dd\bm{x}=\bm{f}(\bm{x})\dd t+\bm{h}(\bm{x})\dd \boldsymbol{\omega}
\end{eqnarray}
where $\bm{x}\in \mathbb{R}^{n}$, $t\in \mathbb{R}_{\geq 0}$,
$\bm{f}:\mathbb{R}^{n}\mapsto \mathbb{R}^{n}$ and
$\bm{h}:\mathbb{R}^{n}\mapsto \mathbb{R}^{n\times r}$ are locally
Lipschitz continuous functions, and $\boldsymbol{\omega}\in
\mathbb{R}^r$ is a standard Wiener process defined on a complete
probability space. Assume $\bm{x}(t)$ to be the stochastic process
solution and $\bm{x}^*$ to be the equilibrium solution (if exists)
of Eq. (\ref{StochasticEquation}), then we have }

\begin{definition}[Transition Measure \cite{Khasminskii11}] The transition measure of
$\bm{x}(t)$, denoted by $\mathcal{P}(\cdot,\cdot,\cdot)$, is a
function from $\mathbb{R}_{\geq
0}\times\mathbb{R}^{n}\times\mathscr{B}$ to $[0,1]$ such that
\begin{eqnarray}{\label{TransitionMeasure}}
\mathcal{P}(t,\bm{x}_0,\mathbb{A})=\mathrm{P}\left(\bm{x}(t)\in
\mathbb{A} | \bm{x}(0)=\bm{x}_0\right)
\end{eqnarray}
where $\mathscr{B}$ is the $\sigma$-algebra of Borel sets in
$\mathbb{R}^{n}$, $\mathbb{A}\in \mathscr{B}$ is a Borel subset, and
$\mathrm{P}(\cdot)$ denotes the probability function.
\end{definition}

\begin{definition}[Invariant Measure \cite{Khasminskii11}]
Let $\pi$ be a measure defined on a Borel space $\mathscr{B}$, then
$\pi$ is an probability invariant measure for a stochastic system of
Eq. (\ref{StochasticEquation}) if $\pi(\mathbb{R}^{n})=1$ and
\begin{equation}\label{Def-InvariantMeasure}
\pi(\mathbb{A})=\int_{\mathbb{R}^{n}}\mathcal{P}(t,\bm{x},\mathbb{A})\pi(\dd\bm{x}),~~
\forall~ t\textgreater 0~ \rm{and}~\forall~\mathbb{A}\in \mathscr{B}
\end{equation}
\end{definition}

\begin{definition}[Stable in Probability \cite{Khasminskii11}] The equilibrium solution $\bm{x}^*$
of Eq. (\ref{StochasticEquation}) is \\~ $~~~~~~\mathrm{(1)}$ stable
in probability if
\begin{equation*}
\lim_{\bm{x}(0)\to
\bm{x}^*}\mathrm{P}\big(\sup{\parallel\bm{x}(t)-\bm{x}^*\parallel_2}<\epsilon\big)=1,~
\forall~ \epsilon\textgreater 0;
\end{equation*}
\\~ $~~~~~~\mathrm{(2)}$ locally asymptotically stable in probability if
\begin{equation*}
\lim_{\bm{x}(0)\to
\bm{x}^*}\mathrm{P}\left(\lim_{t\to\infty}{\parallel\bm{x}(t)-\bm{x}^*\parallel_2}=0\right)=1;
\end{equation*}
\\~ $~~~~~~\mathrm{(3)}$ globally asymptotically stable in probability if
\begin{equation*}
\mathrm{P}\left(\lim_{t\to\infty}{\parallel\bm{x}(t)-\bm{x}^*\parallel_2}=0\right)=1,~\forall~
\bm{x}(0).
\end{equation*}
\end{definition}

{In order to analyze the stability of stochastic
systems, the stochastic version of the second Lyapunov theorem and
passivity theorem were proposed in succession. }

\begin{thm}[Stochastic Lyapunov Theorem\cite{Khasminskii11}]\label{StochasticLyapunov}
If there exists a positive definite
$\mathscr{C}^2(\mathbb{D};\mathbb{R})$ function $V(\bm{x})$ with
respect to $\bm{x}-\bm{x}^*$ such that
\begin{equation}\label{LV}
\mathcal{L}\big[{V(\bm{x})}\big]\leq 0,~\forall~ \bm{x}\in\mathbb{D}
\end{equation}
then the equilibrium solution $\bm{x}^*$ of Eq.
(\ref{StochasticEquation}) is stable in probability, where
$\mathbb{D}\subseteq \mathbb{R}^{n}$ is a bounded open neighborhood
of $\bm{x}^*$ and $\mathcal{L}[\cdot]$ is the infinitesimal
generator of the solution of Eq. (\ref{StochasticEquation}),
calculated through
\begin{equation}{\label{InfinitesimalGenerator}}
\mathcal{L}[\cdot]=\frac{\partial (\cdot)}{\partial
\bm{x}}\bm{f}+\frac{1}{2}\mathrm{tr}\left\{\frac{\partial^{2}
(\cdot)}{\partial \bm{x}^{2}}\bm{h}\bm{h}^\top\right\}
\end{equation}

If the equality in Eq. (\ref{LV}) holds if and only if
$\bm{x}=\bm{x}^*$, then $\bm{x}^*$ is locally asymptotically stable
in probability.

Further, if $\mathbb{D}=\mathbb{R}^{n}$,
$\lim_{\parallel\bm{x}\parallel_2\to\infty}V(\bm{x})=\infty$ (often
said that the Lyapunov function $V(\bm{x})$ is {radially unbounded}) and
$\mathcal{L}\big[{V(\bm{x})}\big]=0\Leftrightarrow \bm{x}=\bm{x}^*$,
then $\bm{x}^*$ is globally asymptotically stable in probability.
\end{thm}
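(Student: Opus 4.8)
The plan is to realize $V(\bm{x}(t))$ as a nonnegative supermartingale and then extract the three conclusions in succession; throughout I take $\bm{x}^*=0$ without loss of generality. The starting point is It\^{o}'s formula applied to $V(\bm{x}(t))$, giving
\begin{equation*}
V(\bm{x}(t))=V(\bm{x}_0)+\int_0^{t}\mathcal{L}\big[V(\bm{x}(s))\big]\,\dd s+\int_0^{t}\frac{\partial V}{\partial \bm{x}}\bm{h}(\bm{x}(s))\,\dd\boldsymbol{\omega}.
\end{equation*}
Since the excerpt defines $\mathcal{L}$ as exactly the drift coefficient in this expansion, the hypothesis $\mathcal{L}[V]\le 0$ makes the bounded-variation part nonincreasing. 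After localizing by the exit time $\tau_\epsilon=\inf\{t:\parallel\bm{x}(t)\parallel_2\ge\epsilon\}$, the stopped process $V(\bm{x}(t\wedge\tau_\epsilon))$ is a genuine (not merely local) supermartingale: the localization is what turns the stochastic integral into a true martingale, because on the compact closure of the ball the quantities $\partial V/\partial\bm{x}$ and $\bm{h}$ are bounded.

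For stability in probability (part 1) I would run a stochastic maximal-inequality argument. Positive definiteness of $V$ yields $m_\epsilon:=\inf_{\parallel\bm{x}\parallel_2=\epsilon}V(\bm{x})>0$, and path continuity gives $V(\bm{x}(\tau_\epsilon))\ge m_\epsilon$ on $\{\tau_\epsilon\le t\}$. The supermartingale estimate $\mathbb{E}[V(\bm{x}(t\wedge\tau_\epsilon))]\le V(\bm{x}_0)$ then forces $\mathrm{P}(\tau_\epsilon\le t)\le V(\bm{x}_0)/m_\epsilon$, and letting $t\to\infty$ bounds the probability of ever leaving the $\epsilon$-ball by $V(\bm{x}_0)/m_\epsilon$. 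Because $V$ is continuous with $V(0)=0$, sending $\bm{x}_0\to 0$ drives this bound to zero, which is precisely the definition of stability in probability.

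For local asymptotic stability (part 2) I would combine the almost-sure convergence of the nonnegative supermartingale $V(\bm{x}(t))$ (from the martingale convergence theorem) with the strict sign condition $\mathcal{L}[V]=0\Leftrightarrow\bm{x}=0$. Taking expectations in the It\^{o} expansion (Dynkin's formula) and using $V\ge 0$ gives $\mathbb{E}\int_0^{\infty}\big(-\mathcal{L}[V(\bm{x}(s))]\big)\,\dd s\le V(\bm{x}_0)<\infty$, so $-\mathcal{L}[V]$ is integrable along almost every trajectory; since $-\mathcal{L}[V]$ is bounded below by a positive constant outside any neighborhood of the origin, this prevents the trajectory from lingering away from $\bm{x}^*$ and forces $\bm{x}(t)\to 0$. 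The step needing care is upgrading ``$V$ possesses an a.s.\ limit'' to ``that limit is $0$,'' and I expect this to be the main obstacle: it is exactly where a LaSalle-type invariance argument (or Khasminskii's recurrence construction) is needed to exclude the trajectory from settling on a nonzero level set of $V$.

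Finally, for the global statement (part 3), radial unboundedness supplies global confinement: the sublevel sets $\{V\le c\}$ are compact, so the supermartingale bound $\mathbb{E}[V(\bm{x}(t))]\le V(\bm{x}_0)$ simultaneously precludes finite-time explosion (the trajectory cannot escape to infinity while keeping $\mathbb{E}V$ bounded) and guarantees that $V(\bm{x}(t))\to 0$ actually forces $\bm{x}(t)\to 0$ globally. With $\mathbb{D}=\mathbb{R}^{n}$ the neighborhood restriction disappears, the localizing stopping times may be sent to infinity, and the convergence argument of part 2 applies from an arbitrary initial condition, yielding $\mathrm{P}(\lim_{t\to\infty}\parallel\bm{x}(t)\parallel_2=0)=1$ for every $\bm{x}(0)$.
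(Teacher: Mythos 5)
The paper does not prove this theorem at all: it is quoted verbatim as a known result from \cite{Khasminskii11} in the preliminaries, so there is no in-paper proof to compare against. Your argument is, in essence, the standard supermartingale proof from that reference, and its skeleton is sound. Part 1 is complete as written: the localized process $V(\bm{x}(t\wedge\tau_\epsilon))$ is a true supermartingale, the bound $\mathrm{P}(\tau_\epsilon\le t)\le V(\bm{x}_0)/m_\epsilon$ follows from optional stopping together with $V(\bm{x}(\tau_\epsilon))\ge m_\epsilon$, and continuity of $V$ at the origin finishes it (you should add the small caveat that $\epsilon$ must be taken small enough that the closed $\epsilon$-ball lies in $\mathbb{D}$; larger $\epsilon$ then follow a fortiori).

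The one place you flag as a genuine obstacle in part 2 --- upgrading ``$V(\bm{x}(t))$ has an a.s.\ limit'' to ``that limit is $0$'' --- does not actually require a LaSalle invariance argument; it closes with the ingredients you already have. On the event that the trajectory never leaves a small ball (which by part 1 has probability close to $1$ when $\bm{x}_0$ is close to $\bm{x}^*$), your Dynkin estimate gives $\int_0^\infty\bigl(-\mathcal{L}[V(\bm{x}(s))]\bigr)\,\dd s<\infty$ a.s. Since $-\mathcal{L}[V]$ is bounded below by a positive constant on the closed annulus $\{\delta\le\|\bm{x}\|_2\le\epsilon\}$ for every $\delta>0$, the trajectory can spend only finite total time outside each $\delta$-ball, hence enters every neighborhood of the origin along some sequence $t_k\to\infty$; then $V(\bm{x}(t_k))\to 0$ by continuity, and since $V(\bm{x}(t))$ converges a.s.\ its limit must be $0$, whence $\bm{x}(t)\to 0$ by positive definiteness. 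For part 3 you should make explicit that non-explosion is obtained \emph{before} assuming a global solution, by stopping at exit times $\tau_R$ from balls of radius $R$ and using $\mathrm{P}(\tau_R\le t)\le V(\bm{x}_0)/\inf_{\|\bm{x}\|_2=R}V(\bm{x})\to 0$ as $R\to\infty$, which is exactly where radial unboundedness enters. With these two points filled in, the proof is the standard one and is correct.
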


{The stochastic passivity theorem is not handed
directly from the literature, but it may be obtained immediately
from the definition of stochastic passivity.}

\begin{definition}[Stochastic Passivity \cite{Florchinger99}]\label{Defstochasticpassivity} An input-output
stochastic differential system in the sense of It\^{o}
\begin{eqnarray}{\label{StochasticSystem}}
\Sigma_S:~\left\{
\begin{array}{lll}
\dd\bm{x}&=&\bm{f}(\bm{x},\bm{u})\dd t+ \bm{h}(\bm{x},\bm{u})\dd \boldsymbol{\omega} \\
\bm{y}&=&\bm{s}(\bm{x},\bm{u})
\end{array}
\right.
\end{eqnarray}
is said to be stochastically passive if there exists a positive
semi-definite $\mathscr{C}^2(\mathbb{R}^{n};\mathbb{R})$ function
$S(\bm{x})$ such that
\begin{equation}
\mathcal{L}\big[{S(\bm{x})}\big]\leq \bm{u}^\top\bm{y},
~\forall~\bm{x}\in\mathbb{R}^{n}
\end{equation}
where $\bm{x}$ is the state, $\bm{u}\in
\mathbb{U}\subseteq\mathbb{R}^m$ the input, $\bm{y}\in
\mathbb{R}^{m}$ the output, the drift term
$\bm{f}:\mathbb{R}^{n}\times\mathbb{U} \mapsto \mathbb{R}^{n}$, the
diffusion term $\bm{h}:\mathbb{R}^{n}\times\mathbb{U}\mapsto
\mathbb{R}^{n\times r}$ and
$\bm{s}:\mathbb{R}^{n}\times\mathbb{U}\mapsto \mathbb{R}^{m}$ all
satisfy the condition of local Lipschitz continuity, and $t$, $\boldsymbol{\omega}$ share the same meaning with those in Eq.
(\ref{StochasticEquation}). The nonnegative real function
$S(\bm{x})$ is called the storage function, the state where
$S(\bm{x})=0$ is the stochastic passive state and the inner product
$\bm{u}^\top\bm{y}$ is called the supply rate.
\end{definition}

\begin{result}[Stochastic Passivity Theorem]The negative feedback
connection of two stochastic passive systems is stochastically
passive.
\end{result}
\begin{proof}
{Let $1$ and $2$ in the form of subscripts represent
these two stochastic passive systems, respectively, then we have
$$\mathcal{L}\big[{S_1(\bm{x}_1)}\big]\leq \bm{u}_1^\top\bm{y}_1~~~\mathrm{and}~~~\mathcal{L}\big[{S_2(\bm{x}_2)}\big]\leq \bm{u}_2^\top\bm{y}_2$$
Define the storage function of their negative feedback connection by
$$S(\bm{x})=S_1(\bm{x}_1)+S_2(\bm{x}_2)$$
and note the fact that
$$\bm{x}=(\bm{x}_1^\top,\bm{x}_2^\top)^\top,~\bm{y}=\bm{y}_1=\bm{u}_2,~\bm{u}=\bm{u}_1+\bm{y}_2$$
then we get
$$\mathcal{L}\big[{S(\bm{x})}\big]=\mathcal{L}\big[{S_1(\bm{x}_1)}\big]+\mathcal{L}\big[{S_2(\bm{x}_2)}\big]\leq (\bm{u}-\bm{y}_2)^\top\bm{y}_1+\bm{u}_2^\top\bm{y}_2=\bm{u}^\top\bm{y}$$
Therefore, the result is true. }
\end{proof}

\begin{result}[Stochastic Passivity and Stability in Probability]
{A stochastic passive system with a positive definite
storage function is stable in probability if a stochastic passive
controller with a positive definite storage function is connected in
negative feedback. }
\end{result}
\begin{proof}
Based on \textit{Result} 1, the whole negative feedback connection
is stochastically passive. As long as the input of the stochastic
passive system (labeled by the subscript ``$1$") is manipulated
according to $\bm{u}_1=\mathcal{C}(\bm{y}_1,\bm{x}_2)$, then
$\bm{u}=\mathbbold{0}_m$, which means
$\mathcal{L}\big[{S(\bm{x})}\big]\leq 0$. Here, the operator
$\mathcal{C}(\cdot)$ is the stochastic passive controller (labeled
by the subscript ``$2$") defined by
$\bm{y}_2=-\mathcal{C}(\bm{u}_2,\bm{x}_2)$. The stability in
probability of $\bm{x}(t)$ is immediately from \textit{Theorem} $1$,
so is that of $\bm{x}_1(t)$.
\end{proof}

\begin{rem}
{Deterministic passive systems are a kind of special
cases of stochastic passive systems. Therefore, the frequently-used
passive controllers \cite{Schaft00}, such as PID Controller, Model
predictive Controller, etc., can all serve for stabilizing the
stochastic passive systems in probability. }
\end{rem}

%%%%%%%%%%%%%%%%%%%%%%%%%%%%%%%%%%%%%%%%%%%%%%%%%%%%%%%%%%%%%%%%%%%%%%%%%%
\section{Loss of stochastic passivity and Problem setting}
{This section contributes to elaborating that stochastic passivity
will vanish either in some stochastic systems or when some control
problems are addressed, and further to formulating the problem of
interest.}
\subsection{Loss of stochastic passivity} {As can be known
from \textit{Definition} \ref{Defstochasticpassivity}, {a key point to
capture stochastic passivity lies in finding a storage function.} We
will derive the necessary condition for stochastic passivity in the
following, and then get the sufficient condition to say the loss of
stochastic passivity. For this purpose, we go back to the stochastic
differential equation of Eq. (\ref{StochasticEquation}). }

%while this function is closely related to the desired state at which
%it needs to reach the minimum and $\Sigma_S$ is likely to be stable
%in probability. For distinction of the desired state from the
%equilibrium solution $\bm{x}^*$ (if exists), the former is denoted
%by $\bm{x}^\dag$ in this context.

\begin{thm}\label{Vanish1}If a stochastic differential equation
given by Eq. (\ref{StochasticEquation}) has a global solution, then
it must be not stable in probability at those states that result in
the nonzero diffusion term.
\end{thm}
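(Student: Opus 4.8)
The plan is to argue by contradiction: suppose $\bm{x}^*$ is a state at which the diffusion term does not vanish, i.e. $\bm{h}(\bm{x}^*)\neq\bm{0}$, and suppose further that the system is stable in probability there. I would then show that trajectories launched arbitrarily close to $\bm{x}^*$ are nonetheless forced to leave every small ball around $\bm{x}^*$ almost surely, which is incompatible with stability. The natural probe is the quadratic function $V(\bm{x})=\|\bm{x}-\bm{x}^*\|_2^2$, which is nonnegative and vanishes only at $\bm{x}^*$. Applying the generator of Eq. (\ref{InfinitesimalGenerator}) gives $\mathcal{L}[V](\bm{x})=2(\bm{x}-\bm{x}^*)^\top\bm{f}(\bm{x})+\mathrm{tr}\{\bm{h}(\bm{x})\bm{h}(\bm{x})^\top\}$, and the decisive observation is that the drift contribution is annihilated at $\bm{x}=\bm{x}^*$, leaving $\mathcal{L}[V](\bm{x}^*)=\mathrm{tr}\{\bm{h}(\bm{x}^*)\bm{h}(\bm{x}^*)^\top\}$, a quantity that is strictly positive precisely because $\bm{h}(\bm{x}^*)\neq\bm{0}$. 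Thus the noise alone endows $V$ with a strictly positive mean growth rate exactly where stability is sought.

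Since $\bm{f}$ and $\bm{h}$ are continuous, $\mathcal{L}[V]$ is continuous, so I would fix a radius $\epsilon>0$ small enough that $\mathcal{L}[V](\bm{x})\geq c>0$ for all $\bm{x}$ in the closed ball $\overline{B_\epsilon}=\{\bm{x}:\|\bm{x}-\bm{x}^*\|_2\leq\epsilon\}$. Let $\tau$ be the first exit time of the trajectory from the open ball $B_\epsilon$. The core computation is Dynkin's formula for the stopped process: because $\mathcal{L}[V]\geq c$ while the process remains inside $B_\epsilon$, one obtains $\mathbb{E}[V(\bm{x}(t\wedge\tau))]\geq V(\bm{x}(0))+c\,\mathbb{E}[t\wedge\tau]$. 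On the other hand, continuity of the sample paths forces $V(\bm{x}(t\wedge\tau))\leq\epsilon^2$ for every $t$, so that $c\,\mathbb{E}[t\wedge\tau]\leq\epsilon^2$. Letting $t\to\infty$ and invoking monotone convergence yields $\mathbb{E}[\tau]\leq\epsilon^2/c<\infty$, whence $\tau<\infty$ almost surely for every starting point $\bm{x}(0)\in B_\epsilon$.

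Because $\tau<\infty$ almost surely and the paths are continuous, the trajectory reaches the sphere $\|\bm{x}-\bm{x}^*\|_2=\epsilon$ in finite time with probability one, so $\mathrm{P}(\sup_{t\geq 0}\|\bm{x}(t)-\bm{x}^*\|_2<\epsilon)=0$ however close $\bm{x}(0)$ is chosen to $\bm{x}^*$. The limit appearing in the definition of stability in probability is therefore $0$ rather than $1$, which contradicts the assumed stability and proves the claim. The global-solution hypothesis enters here only to guarantee that $\bm{x}(t)$ is well defined for all $t\geq 0$, so that the supremum over $t$ and the passage $t\to\infty$ are meaningful.

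I expect the main obstacle to be the rigorous justification of Dynkin's formula up to the stopping time $\tau$, in particular verifying that the martingale part $\int_0^{t\wedge\tau}2(\bm{x}-\bm{x}^*)^\top\bm{h}(\bm{x})\,\dd\boldsymbol{\omega}$ has vanishing expectation. This is where confinement to the compact set $\overline{B_\epsilon}$ is essential: on $\overline{B_\epsilon}$ the factors $\bm{x}-\bm{x}^*$ and $\bm{h}(\bm{x})$ are bounded, which makes the integrand bounded and the stopped stochastic integral a genuine martingale, while local Lipschitz continuity already rules out blow-up inside the ball. Keeping the entire argument inside $\overline{B_\epsilon}$ is thus the delicate step, and it is also what renders the global-solution assumption a matter of well-posedness rather than of substance.
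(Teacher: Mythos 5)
Your proposal is correct, and it reaches the conclusion by a genuinely different (and cleaner) route than the paper. The paper truncates the quadratic into a globally bounded $\mathscr{C}^2$ function $U$, evaluates $\mathcal{L}[U]$ at the point via the limit definition of the generator to get $\mathrm{E}[U(\bm{x}(\tau))]=c\tau>0$ at a small fixed deterministic time $\tau$ (this is where the global-solution hypothesis is used), converts that into a positive lower bound $c\tau-\epsilon^2$ on the probability of being far from the point at time $\tau$, and then runs a conditioning argument over the first hitting time of a sphere of radius $\delta$ to exhibit, for every $\delta$, an initial point $\bm{y}_\delta$ at distance $\delta$ whose escape probability is at least $c\tau-\epsilon^2$; the limit in the definition of stability therefore cannot equal $1$. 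You instead keep the plain quadratic $V$, use continuity to get $\mathcal{L}[V]\geq c>0$ uniformly on a small closed ball, and apply Dynkin's formula to the process stopped at the exit of that ball to bound the expected exit time by $\epsilon^2/c$, hence almost-sure exit. Your argument is the standard nondegenerate-diffusion exit-time computation: it is shorter, it avoids both the truncation construction and the selection of the points $\bm{y}_\delta$, it yields the strictly stronger conclusion that $\mathrm{P}\big(\sup_t\|\bm{x}(t)-\bm{x}^*\|_2<\epsilon\big)=0$ for every interior starting point (the paper only gets this probability bounded away from $1$), and it does not really need the global-solution hypothesis, since everything takes place before the exit time of a bounded set, where local Lipschitz continuity already precludes explosion and makes the stopped stochastic integral a true martingale. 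The paper's approach, by contrast, is tied to a fixed deterministic time and a bounded test function, which is precisely why it must assume a global solution.
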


\begin{proof}
{Let the set of all states result in the nonzero
diffusion term be given by
$$\mathbb{H}_{\neq 0}:=\{\bm{x}^\ddag\in \mathbb{R}^n|\bm{h}(\bm{x}^\ddag)\neq\mathbbold{0}_{n\times r}\}$$
For any $\bm{x}^\ddag\in\mathbb{H}_{\neq 0}$ and $\gamma\textgreater
0$, it is expected that
$$\lim_{\bm{x}(0)\to
\bm{x}^\ddag}\mathrm{P}\big(\sup{\parallel\bm{x}(t)-\bm{x}^\ddag\parallel_2}<\gamma\big)=1$$
must be not true. Towards this purpose, we assume
$\bm{x}^\ddag=\mathbbold{0}_{n}$ for simplicity but without loss of
generality (which means $\bm{h}(\mathbbold{0}_{n})\neq
\mathbbold{0}_{n\times r}$), and further construct a real-valued
function $\tilde{U}:\mathbb{R}\mapsto \mathbb{R}$ in the form of
\begin{equation*}
\tilde{U}(x)= \left\{
\begin{array}{ll}
x^{2}  & 0\leq |x|\leq\frac{1}{2}\\
-\frac{2}{3}x^{3}+2x^{2}-\frac{1}{2}x+\frac{1}{2} & \frac{1}{2}\leq|x|\leq\frac{3}{2}\\
\frac{1}{3}x^{3}-\frac{5}{2}x^2+\frac{25}{4}x-\frac{79}{24} & \frac{3}{2}\leq|x|\leq\frac{5}{2}\\
\frac{23}{12} & \frac{5}{2}\leq|x|
\end{array}
\right..
\end{equation*}
Based on this function, a positive definite, twice continuously
differentiable and bounded real function mapping $\mathbb{R}^n$ to
$\mathbb{R}$ is defined by
$$U(\bm{x})=\frac{12}{23}\times\tilde{U}(\|\bm{x}\|_{2})$$
Clearly, $U(\bm{x})\in [0,1]$ and, moreover,
$U(\bm{x})=\frac{12}{23}\bm{x}^{\top}\bm{x}$ in the
$\frac{1}{2}$-neighborhood of $\mathbbold{0}_{n}$.}

{In order to finish the proof, we impose the
infinitesimal generator $\mathcal{L}[\cdot]$ on $U(\bm{x})$, and are
only concerned about the result at $\bm{x}=\mathbbold{0}_{n}$. From
Eq. (\ref{InfinitesimalGenerator}), we have
\begin{equation*}
\mathcal{L}[U](\mathbbold{0}_{n})=\frac{12}{23}\text{tr}\big\{\bm{h}(\mathbbold{0}_{n})\bm{h}^\top(\mathbbold{0}_{n})\big\}>0.
\end{equation*}
On the other hand, from the definition of $\mathcal{L}[\cdot]$
\cite{Khasminskii11} we get
\begin{equation*}
\mathcal{L}[U](\mathbbold{0}_{n}) =\lim_{t\to 0}
\frac{\text{E}^{\mathbbold{0}_{n}}\Big[U\big(\bm{x}(t)\big)\Big]-U(\mathbbold{0}_{n})}{t}
\end{equation*}
where $\mathbbold{0}_{n}$ appearing in
$\text{E}^{\mathbbold{0}_{n}}\Big[U\big(\bm{x}(t)\big)\Big]$
indicates that the initial condition is
$\bm{x}(0)=\mathbbold{0}_{n}$. Since the stochastic differential
equation (\ref{StochasticEquation}) has a global solution, there
exist a time $\tau \textgreater 0$ and a constant $c \textgreater 0$
so that
{$\text{E}^{\mathbbold{0}_{n}}\Big[U\big(\bm{x}(\tau)\big)\Big]=c\tau\textgreater
0$}. Also, since
\begin{eqnarray*}
\text{E}^{\mathbbold{0}_{n}}\Big[U\big(\bm{x}(\tau)\big)\Big]&=&
\text{E}^{\mathbbold{0}_{n}}\Big[~U\big(\bm{x}(\tau)\big)~\Big|~U\big(\bm{x}(\tau)\big)<{\epsilon^2}~\Big] \text{P}\Big(U\big(\bm{x}(\tau)\big)<{\epsilon^2}~\big|~\bm{x}(0)=\mathbbold{0}_{n}\Big) \notag \\
&+& \text{E}^{\mathbbold{0}_{n}}\Big[~U\big(\bm{x}(\tau)\big)~\Big|
~U\big(\bm{x}(\tau)\big)\geq{\epsilon^2}~\Big]
\text{P}\Big(U\big(\bm{x}(\tau)\big)\geq{\epsilon^2}~\big|~\bm{x}(0)=\mathbbold{0}_{n}\Big)\notag
\end{eqnarray*}
and
$$\text{E}^{\mathbbold{0}_{n}}\Big[~U\big(\bm{x}(\tau)\big)~\Big|~U\big(\bm{x}(\tau)\big)<{\epsilon^2}~\Big]\textless
\epsilon^2,~\text{P}\Big(U\big(\bm{x}(\tau)\big)<{\epsilon^2}~\big|~\bm{x}(0)=\mathbbold{0}_{n}\Big)\leq
1$$ together with the fact that
$$U\big(\bm{x}(\tau)\big)\in [0,1]\Rightarrow\text{E}^{\mathbbold{0}_{n}}\Big[~U\big(\bm{x}(\tau)\big)~\Big|
~U\big(\bm{x}(\tau)\big)\geq{\epsilon^2}~\Big]\textless 1$$ where
$\epsilon$ is any positive number, we have
$$\text{P}\left(~U\big(\bm{x}(\tau)\big)\geq{\epsilon^2}~\big|~\bm{x}(0)=\mathbbold{0}_{n}~\right)\geq
c\tau-\epsilon^2$$ We set $\epsilon$ to be sufficiently small so
that
\begin{equation*}{\label{ProbabilityEsitimation}}
\text{P}\big(~\|\bm{x}(\tau)\|_{2}\geq{\gamma}~\big|~\bm{x}(0)=\mathbbold{0}_{n}~\big)\geq
c\tau-\epsilon^2>0
\end{equation*}
where $\gamma=\sqrt{\frac{23}{12}}\epsilon$. }

From the definition, the leftmost term in the above inequality can
be calculated by
\begin{align*}
&\text{P}\big(~\|\bm{x}(\tau)\|_{2}\geq{\gamma}~\big|~\bm{x}(0)=\mathbbold{0}_{n}~\big) = \\
&\int_{\|\bm{y}\|_2=\delta}
\text{P}\big(~\bm{x}(\tau_{\delta})=\dd\bm{y}~\big|~\bm{x}(0)=\mathbbold{0}_{n}~\big)
\text{P}\big(~\|\bm{x}(\tau))\|_{2}\geq{\gamma} ~\big|~
\bm{x}(\tau_{\delta})=\bm{y} ~\big)
\end{align*}
where
$$\tau_{\delta}=\tau\wedge\inf\big\{~t~|~\|\bm{x}(t)\|_{2}=\delta<\gamma~\big\}$$
is a stopping time. Then there exists at least one point, denoted
by $\bm{y}_\delta$, on the surface of the ball
$\|\bm{x}(t)\|_2=\delta$ such that
$$\text{P}\big(~\|\bm{x}(\tau))\|_{2}\geq{\gamma} ~\big|~
\bm{x}(\tau_{\delta})=\bm{y}_\delta ~\big)\geq c\tau-\epsilon^2.$$
Note that Eq. (\ref{StochasticEquation}) is autonomous, therefore
$$\text{P}\left(\sup_{t\in[0,\infty)}\|\bm{x}(t)\|_2\geq\gamma ~\Big|~ \bm{x}(0)=\bm{y}_\delta \right)
\geq \text{P}\Big(\|\bm{x}(\tau))\|_{2}\geq{\gamma} \Big|~
\bm{x}(\tau_{\delta})=\bm{y}_\delta \Big) \geq c\tau-\epsilon^2
$$
Namely, for $\forall~ \delta\leq\gamma$ there always exist a
$\bm{y}_\delta$ to make the above inequality be true. Clearly, the
above inequality suggests that Eq. (\ref{StochasticEquation}) must
be not stable in probability at $\bm{x}=\mathbbold{0}_{n}$.
\end{proof}

{It is straightforward to write the inverse negative
proposition of \textit{Theorem} \ref{Vanish1} as a corollary.
\begin{cor}\label{Vanish2}
For a stochastic differential equation in the form of
(\ref{StochasticEquation}) with a global solution, if it is stable
in probability at a desired state $\bm{x}^\dag$ (may be not the
equilibrium $\bm{x}^*$), then $\bm{x}^\dag$ must belong to
$\mathbb{H}_{=0}$ which is defined by
\begin{equation}
\mathbb{H}_{=0}:=\{\bm{x}^\dag\in
\mathbb{R}^n|\bm{h}(\bm{x}^\dag)=\mathbbold{0}_{n\times r}\}
\end{equation}
\end{cor}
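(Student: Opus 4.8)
The plan is to observe that this corollary is precisely the logical contrapositive of \textit{Theorem} \ref{Vanish1}, which the paper itself calls its ``inverse negative proposition''. Since \textit{Theorem} \ref{Vanish1} already carries all the analytic weight, no new estimate is required here; the proof is purely a matter of restating that theorem in contrapositive form. I would therefore proceed by contradiction and simply cite the preceding theorem at the decisive step.

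Concretely, suppose that for a system of the form (\ref{StochasticEquation}) admitting a global solution, the desired state $\bm{x}^\dag$ is stable in probability and yet $\bm{x}^\dag \notin \mathbb{H}_{=0}$. Because the sets $\mathbb{H}_{=0}$ and $\mathbb{H}_{\neq 0}$ partition $\mathbb{R}^n$ — their defining conditions $\bm{h}(\bm{x})=\mathbbold{0}_{n\times r}$ and $\bm{h}(\bm{x})\neq\mathbbold{0}_{n\times r}$ being mutually exclusive and exhaustive — the assumption $\bm{x}^\dag\notin\mathbb{H}_{=0}$ forces $\bm{x}^\dag\in\mathbb{H}_{\neq 0}$, i.e.\ $\bm{h}(\bm{x}^\dag)\neq\mathbbold{0}_{n\times r}$. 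Applying \textit{Theorem} \ref{Vanish1} to the state $\bm{x}^\dag$ (playing the role of $\bm{x}^\ddag$ in that theorem) then yields that the system is not stable in probability at $\bm{x}^\dag$, directly contradicting the hypothesis. Hence $\bm{x}^\dag\in\mathbb{H}_{=0}$, as claimed.

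The only point that deserves any care — and it is more a matter of bookkeeping than of genuine difficulty — is the notion of stability invoked at a general desired state $\bm{x}^\dag$ rather than at the equilibrium $\bm{x}^*$ of \textit{Definition} 3. I would make explicit that ``stable in probability at $\bm{x}^\dag$'' is to be read with $\bm{x}^\dag$ substituted for $\bm{x}^*$ in that definition, exactly as the proof of \textit{Theorem} \ref{Vanish1} already did when it reduced to the case $\bm{x}^\ddag=\mathbbold{0}_n$ without loss of generality. With this reading fixed, the contrapositive transfers verbatim from \textit{Theorem} \ref{Vanish1}, so I anticipate no real obstacle in this corollary itself; the substance lives entirely in the theorem it negates.
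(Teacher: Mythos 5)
Your proposal is correct and matches the paper exactly: the paper gives no separate proof for this corollary, presenting it as the ``inverse negative proposition'' (contrapositive) of \textit{Theorem}~\ref{Vanish1}, which is precisely the contradiction argument you spell out. Your added remark on reading ``stable in probability at $\bm{x}^\dag$'' with $\bm{x}^\dag$ in place of $\bm{x}^*$ is a reasonable clarification but introduces nothing beyond what the paper already assumes.
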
}

{Note that the above result depends on the condition
that the stochastic differential equation (\ref{StochasticEquation})
has a global solution. However, under the condition of local
Lipschitz continuity, Eq. (\ref{StochasticEquation}) has a unique
solution only before explosion time. Based on this result, we will
reveal that there is no explosion for some stochastic passive
systems, so it must have a global solution. To this task, attention
is turned to the Non-explosion condition of a stochastic
differential equation proposed by Narita \cite{narita82}.}

\begin{lem}[Non-explosion Condition \cite{narita82}]
Given a stochastic differential equation represented by Eq.
(\ref{StochasticEquation}), if for $\forall~ T\textgreater 0$, there
exist two positive numbers $c_{T}\textgreater 0$ and
$R_{T}\textgreater 0$, and a scalar function
$U_{T}\in\mathscr{C}^{2}\big([0,T]\times
\mathbb{R}^{n};\mathbb{R}\big)$ such that
\begin{equation}\label{Non-explosion1}
\mathcal{L}[{U_T(t,\bm{x})}]\leq c_T
\end{equation}
holds for all $t\leq T$ and $\|\bm{x}\|_{2}\geq R_T$, and moreover,
\begin{equation}\label{Non-explosion2}
\lim_{\|\bm{x}\|_{2}\to\infty}\inf_{0\leq t\leq
T}U_T(t,\bm{x})=\infty
\end{equation}
then the solutions of Eq. (\ref{StochasticEquation}) are of
non-explosion, i.e., the explosion time beginning at any
$t_0\textgreater 0$ and $\bm{x}_0\in\mathbb{R}^{n}$, denoted by
$t_e(t_{0},\bm{x}_{0})$, satisfying
$$\mathrm{P}\big(t_e(t_{0},\bm{x}_{0})=\infty\big)=1$$
\end{lem}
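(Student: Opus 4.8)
The plan is to establish the contrapositive of explosion by a standard localization argument built on Dynkin's formula, which is the usual route for Khasminskii--Narita type non-explosion criteria. Fix an arbitrary $T>0$ together with the associated constants $c_T,R_T$ and test function $U_T$. For each integer $m>R_T$ introduce the exit time
\[
\tau_m:=\inf\{\,t\geq t_0\,:\,\|\bm{x}(t)\|_2\geq m\,\},
\]
so that, by the very definition of the explosion time, $\tau_m\uparrow t_e(t_0,\bm{x}_0)$ as $m\to\infty$. Since $\{t_e=\infty\}=\bigcap_{T\in\mathbb{N}}\{t_e>T\}$, it suffices to prove $\mathrm{P}(t_e>T)=1$ for every such $T$.

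First I would upgrade the one-sided hypothesis on $\mathcal{L}[U_T]$ to a genuinely global drift bound. On the compact set $[0,T]\times\{\|\bm{x}\|_2\leq R_T\}$ the map $\mathcal{L}[U_T]$ is continuous (as $U_T\in\mathscr{C}^2$ and $\bm{f},\bm{h}$ are continuous), hence bounded above there; combined with \eqref{Non-explosion1} this yields a constant $\bar c_T$ with $\mathcal{L}[U_T](t,\bm{x})\leq\bar c_T$ on all of $[0,T]\times\mathbb{R}^n$. Applying the space--time It\^{o} formula to the stopped process $U_T(t\wedge\tau_m,\bm{x}(t\wedge\tau_m))$ on $[t_0,T]$, and noting that up to $\tau_m$ the path stays in the compact region $\|\bm{x}\|_2\leq m$ so that the diffusion integrand $(\partial U_T/\partial\bm{x})\bm{h}$ is bounded and the It\^{o} integral is a zero-mean martingale, Dynkin's formula gives
\[
\mathrm{E}\big[U_T(t\wedge\tau_m,\bm{x}(t\wedge\tau_m))\big]\leq U_T(t_0,\bm{x}_0)+\bar c_T\,(T-t_0),\qquad t\in[t_0,T].
\]

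For the matching lower bound I would invoke \eqref{Non-explosion2}. The function $g_T(\bm{x}):=\inf_{0\leq t\leq T}U_T(t,\bm{x})$ is continuous, tends to $\infty$ as $\|\bm{x}\|_2\to\infty$, hence is bounded below on $\mathbb{R}^n$ by some $\underline{m}_T$, and $\phi_T(m):=\inf_{\|\bm{x}\|_2=m}g_T(\bm{x})\to\infty$. On $\{\tau_m\leq T\}$ one has $\|\bm{x}(\tau_m)\|_2=m$, so $U_T(\tau_m,\bm{x}(\tau_m))\geq\phi_T(m)$, while on the complement $U_T\geq\underline{m}_T$; taking $t=T$ above therefore yields
\[
\phi_T(m)\,\mathrm{P}(\tau_m\leq T)+\underline{m}_T\,\mathrm{P}(\tau_m>T)\leq U_T(t_0,\bm{x}_0)+\bar c_T\,(T-t_0).
\]
This rearranges to $\mathrm{P}(\tau_m\leq T)\leq K_T/\phi_T(m)$ with $K_T$ independent of $m$. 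Letting $m\to\infty$, using $\phi_T(m)\to\infty$ and the identity $\{t_e\leq T\}=\bigcap_m\{\tau_m\leq T\}$ together with continuity of probability from above, I obtain $\mathrm{P}(t_e\leq T)=0$, i.e. $\mathrm{P}(t_e>T)=1$, and the claim follows.

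I expect the main obstacle to be the rigorous treatment of the time-dependent test function and the localization. One must use the space--time version of It\^{o}'s formula so that the drift coefficient of $U_T(t,\bm{x}(t))$ is precisely $\mathcal{L}[U_T]$ (with the partial time derivative absorbed into the generator), and one must verify that stopping at $\tau_m$ turns the stochastic integral into a true martingale before the limit $m\to\infty$ can be taken. The careful bookkeeping between the exterior region $\{\|\bm{x}\|_2\geq R_T\}$, where the drift bound is assumed, and the interior ball, where it has to be recovered from continuity and compactness, is the delicate point of the estimate.
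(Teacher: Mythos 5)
The paper states this lemma as a result quoted from Narita \cite{narita82} and supplies no proof of its own, so there is nothing internal to compare against; your argument is the standard Khasminskii--Narita localization proof and it is correct. The two points you single out as delicate --- upgrading the drift bound $\mathcal{L}[U_T]\leq c_T$ from the exterior region $\{\|\bm{x}\|_2\geq R_T\}$ to all of $[0,T]\times\mathbb{R}^n$ by continuity on the compact interior, and reading $\mathcal{L}$ as the space--time generator (with $\partial U_T/\partial t$ included) so that Dynkin's formula applies to the process stopped at $\tau_m$ --- are exactly the right ones, and both are handled correctly, as is the passage to the limit via $\{t_e\leq T\}=\bigcap_m\{\tau_m\leq T\}$ and $\phi_T(m)\to\infty$.
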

{~~~~~In the following, that \textit{Lemma} $1$ is
applied to a stochastic passive system yields}
\begin{pro}
For a stochastic differential system $\Sigma_S$ governed by Eq.
(\ref{StochasticSystem}), if there exists a {radially unbounded} Lyapuonv function
so that $\Sigma_S$ is stochastically passive, then the unforced
version of Eq. (\ref{StochasticSystem}) has a global solution.
\end{pro}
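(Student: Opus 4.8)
The plan is to feed the radially unbounded storage function directly into the Non-explosion Condition of \textit{Lemma}~$1$. First I would specialize the stochastic passivity inequality of \textit{Definition}~\ref{Defstochasticpassivity} to the unforced dynamics, which arises by fixing $\bm{u}=\mathbbold{0}_m$ in Eq.~(\ref{StochasticSystem}). Because the supply rate then vanishes, $\bm{u}^\top\bm{y}=\mathbbold{0}_m^\top\bm{y}=0$, and the passivity inequality collapses to
$$\mathcal{L}[S(\bm{x})]\leq 0,\quad\forall~\bm{x}\in\mathbb{R}^n,$$
in which $\mathcal{L}[\cdot]$ is now precisely the infinitesimal generator of the unforced equation, that is, the operator of Eq.~(\ref{InfinitesimalGenerator}) built from the drift $\bm{f}(\bm{x},\mathbbold{0}_m)$ and diffusion $\bm{h}(\bm{x},\mathbbold{0}_m)$.

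Next I would check the two hypotheses of \textit{Lemma}~$1$ by taking the time-independent choice $U_T(t,\bm{x})=S(\bm{x})$ for every $T>0$. Since the passivity assumption furnishes $S\in\mathscr{C}^2(\mathbb{R}^n;\mathbb{R})$, we automatically have $U_T\in\mathscr{C}^2([0,T]\times\mathbb{R}^n;\mathbb{R})$. For inequality~(\ref{Non-explosion1}), the bound just derived gives $\mathcal{L}[U_T(t,\bm{x})]=\mathcal{L}[S(\bm{x})]\leq 0$, so any positive constant, say $c_T=1$, serves, and it holds for all $t\leq T$ and all $\bm{x}$, in particular for $\|\bm{x}\|_2\geq R_T$ with any $R_T>0$. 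For condition~(\ref{Non-explosion2}), the absence of $t$-dependence makes the inner infimum inert, so the requirement reduces to $\lim_{\|\bm{x}\|_2\to\infty}S(\bm{x})=\infty$, which is exactly the radial unboundedness hypothesis on the storage function.

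With both conditions verified, \textit{Lemma}~$1$ delivers $\mathrm{P}\big(t_e(t_0,\bm{x}_0)=\infty\big)=1$, i.e., the unforced dynamics possess a non-explosive and therefore global solution, as claimed. The argument is largely bookkeeping; the one genuine point to confirm is that setting $\bm{u}=\mathbbold{0}_m$ truly converts the $\mathcal{L}$ appearing in the passivity inequality into the generator of the unforced system, so that the sign information on $\mathcal{L}[S]$ attaches to the correct operator, together with the tacit assumption $\mathbbold{0}_m\in\mathbb{U}$. The only mild subtlety is reconciling ``radially unbounded'' with the shell-infimum form~(\ref{Non-explosion2}); here it is immediate because $U_T$ is autonomous, whereas a genuinely $t$-dependent choice would instead require controlling the infimum over $[0,T]$ uniformly in $\bm{x}$.
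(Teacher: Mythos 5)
Your argument is correct and is essentially the paper's own proof: both set $\bm{u}=\mathbbold{0}_m$ to collapse the supply rate, obtain $\mathcal{L}[S(\bm{x})]\leq 0$ for the unforced generator, and then invoke \textit{Lemma}~$1$ with the radially unbounded storage function playing the role of $U_T$. Your version merely spells out the verification of conditions (\ref{Non-explosion1}) and (\ref{Non-explosion2}) in more detail than the paper does.
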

\begin{proof}
{Assume $V(\bm{x})$ to be the {radially unbounded} Lyapuonv
function that suggests $\Sigma_S$ to be stochastically passive, then
by designating the zero controller to $\Sigma_S$, i.e.,
$\bm{u}=\mathbbold{0}_m$, we have {$\mathcal{L}[V(\bm{x})]\leq 0$}. It
is naturally to observe that $V(\bm{x})$ satisfies Eq.
(\ref{Non-explosion2}). Note that the state evolution of this
unforced version of $\Sigma_S$ is just the same as Eq.
(\ref{StochasticEquation}). Hence, the solutions of $\Sigma_S$ are
of non-explosion based on \textit{Lemma} $1$. Namely, Eq.
(\ref{StochasticSystem}) has a global solution. }
\end{proof}

{From \textit{Proposition} $1$, one can know that
some stochastic passive systems must have a global solution without
force. Combining this result with \textit{Corollary} $1$, we get the
necessary condition for saying $\Sigma_S$ to be of stochastic
passivity, which is expressed as follows.}

\begin{thm}[Necessary Condition for Stochastic Passivity] If there exists
a {radially unbounded} Lyapunov function that can render a stochastic differential
system $\Sigma_S$ described by Eq. (\ref{StochasticSystem}) to be
stochastically passive, then the unforced diffusion term must vanish
at the stochastic passive state.
\end{thm}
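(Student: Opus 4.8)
The plan is to chain together the three results already established in this section, namely the Stochastic Lyapunov Theorem (\textit{Theorem} \ref{StochasticLyapunov}), \textit{Proposition} $1$, and \textit{Corollary} \ref{Vanish2}. Let $V(\bm{x})$ denote the radially unbounded Lyapunov function that simultaneously serves as the storage function certifying the stochastic passivity of $\Sigma_S$, and let $\bm{x}^\dag$ be the stochastic passive state, i.e., the point at which $V(\bm{x}^\dag)=0$. Since $V$ is invoked as a Lyapunov function, I would treat it as positive definite with respect to $\bm{x}^\dag$, so that $\bm{x}^\dag$ is the isolated minimizer of $V$. By \textit{Proposition} $1$, the very existence of such a radially unbounded storage function guarantees that the unforced version of Eq. (\ref{StochasticSystem}) admits a global solution.

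Next I would specialize the passivity inequality $\mathcal{L}[V(\bm{x})]\leq \bm{u}^\top\bm{y}$ to the zero input $\bm{u}=\mathbbold{0}_m$. The supply rate then vanishes, leaving $\mathcal{L}[V(\bm{x})]\leq 0$ on all of $\mathbb{R}^n$, where $\mathcal{L}$ is now the generator of the unforced dynamics, which coincides with Eq. (\ref{StochasticEquation}) for the drift $\bm{f}(\bm{x},\mathbbold{0}_m)$ and diffusion $\bm{h}(\bm{x},\mathbbold{0}_m)$. Combining the positive definiteness and radial unboundedness of $V$ with this decrease condition, the Stochastic Lyapunov Theorem yields that $\bm{x}^\dag$ is stable in probability for the unforced system.

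Finally, the unforced system is a stochastic differential equation of the form (\ref{StochasticEquation}) possessing a global solution and being stable in probability at $\bm{x}^\dag$. \textit{Corollary} \ref{Vanish2} then forces $\bm{x}^\dag\in\mathbb{H}_{=0}$, that is, $\bm{h}(\bm{x}^\dag,\mathbbold{0}_m)=\mathbbold{0}_{n\times r}$, so the unforced diffusion term vanishes at the stochastic passive state, which is exactly the assertion.

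The step I expect to be the main obstacle is the bridge between the passivity framework and the Lyapunov hypothesis: \textit{Definition} \ref{Defstochasticpassivity} only requires the storage function to be positive \emph{semi}-definite, whereas \textit{Theorem} \ref{StochasticLyapunov} needs positive definiteness with respect to $\bm{x}^\dag$. The argument therefore hinges on reading ``radially unbounded Lyapunov function'' as supplying precisely this positive definiteness, so that the zero set $\{V=0\}$ reduces to the single passive state $\bm{x}^\dag$ rather than a nontrivial level set; once this alignment is secured, the remaining deductions are direct applications of the preceding results.
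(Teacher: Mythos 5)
Your proposal is correct and follows exactly the same route as the paper's proof: set $\bm{u}=\mathbbold{0}_m$ to get $\mathcal{L}[V]\leq 0$, invoke the Stochastic Lyapunov Theorem for stability in probability at the passive state, use \textit{Proposition} $1$ for the global solution, and conclude via \textit{Corollary} \ref{Vanish2}. Your closing remark about the semi-definite/definite mismatch is a fair observation, but the theorem's hypothesis of a ``Lyapunov function'' supplies the positive definiteness, just as you assume.
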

\begin{proof}
From \textit{Theorem} $1$ of Stochastic Lyapunov theorem, $\Sigma_S$
is stable in probability at the stochastic passive state with the
zero controller. This together with \textit{Proposition} $1$ and
\textit{Corollary} $1$ yields the result to be true.
\end{proof}

We further express the inverse negative proposition of
\textit{Theorem} $3$ to get the sufficient condition for loss of
stochastic passivity.

\begin{cor}[Sufficient Condition for Loss of Stochastic Passivity]\label{Vanish3}
If the unforced diffusion term $\bm{h}(\bm{x},\mathbbold{0}_{m})$ in
a stochastic differential system $\Sigma_S$ in the form of
(\ref{StochasticSystem}) does not vanish at any state $\bm{x}\in
\mathbb{R}^n$, then there does not exist any {radially unbounded} Lyapunov
function to ensure $\Sigma_S$ to be stochastically passive.
\end{cor}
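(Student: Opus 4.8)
The plan is to prove this statement exactly as the contrapositive of \textit{Theorem} $3$, so I would argue by contradiction. Suppose, contrary to the claim, that there does exist a radially unbounded Lyapunov function $S(\bm{x})$ rendering $\Sigma_S$ stochastically passive. The goal is then to exhibit at least one state at which the unforced diffusion term vanishes, which will collide head-on with the standing hypothesis that $\bm{h}(\bm{x},\mathbbold{0}_m)\neq\mathbbold{0}_{n\times r}$ for every $\bm{x}\in\mathbb{R}^n$.

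The first substantive step is to secure the existence of a stochastic passive state, i.e. a point $\bm{x}^*$ with $S(\bm{x}^*)=0$, since this is precisely the point at which \textit{Theorem} $3$ delivers its conclusion. Here I would use that $S$ is $\mathscr{C}^2$, hence continuous, and radially unbounded, hence coercive; its sublevel sets are therefore compact and $S$ attains a global minimum on $\mathbb{R}^n$, say at $\bm{x}^*$ with value $m=S(\bm{x}^*)\geq 0$. If $m>0$ I would replace $S$ by $S-m$: this leaves the defining inequality $\mathcal{L}[S]\leq\bm{u}^\top\bm{y}$ intact, because the infinitesimal generator $\mathcal{L}[\cdot]$ in Eq. (\ref{InfinitesimalGenerator}) annihilates additive constants, while $S-m$ stays positive semi-definite and radially unbounded. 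Thus without loss of generality $S(\bm{x}^*)=0$, so $\bm{x}^*$ is a genuine stochastic passive state in the sense of \textit{Definition} \ref{Defstochasticpassivity}.

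With the passive state in hand the remainder is immediate. Applying \textit{Theorem} $3$ to the zero-input system at $\bm{x}^*$, the unforced diffusion term must satisfy $\bm{h}(\bm{x}^*,\mathbbold{0}_m)=\mathbbold{0}_{n\times r}$. This directly contradicts the hypothesis that $\bm{h}(\bm{x},\mathbbold{0}_m)$ vanishes at no state. Hence no radially unbounded Lyapunov function can render $\Sigma_S$ stochastically passive, which is the desired conclusion.

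I expect the only point requiring genuine care to be the existence of the stochastic passive state argued in the second step; everything downstream is a verbatim invocation of \textit{Theorem} $3$, which itself chains \textit{Proposition} $1$, \textit{Corollary} $1$, and the stochastic Lyapunov theorem. In particular I would be explicit that positive semi-definiteness of $S$ does not by itself force a zero (a function bounded below by a positive constant can still be $\geq 0$), and that coercivity together with the constant-shift invariance of $\mathcal{L}$ is exactly what upgrades the attained minimum to the value zero. Omitting this would leave a gap precisely in the case where $S$ is bounded away from zero, so it is worth one sentence to close.
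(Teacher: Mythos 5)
Your proof is correct and takes essentially the same route as the paper, which offers no separate argument for the corollary but simply declares it to be the contrapositive (``inverse negative proposition'') of \textit{Theorem} $3$. Your extra step securing the existence of a stochastic passive state --- using coercivity of the radially unbounded storage function to obtain a global minimizer and the fact that $\mathcal{L}[\cdot]$ annihilates additive constants to normalize the minimum value to zero --- is a legitimate refinement that closes a gap the paper leaves implicit, but it does not constitute a different approach.
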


\begin{rem}
\textit{Corollary} \ref{Vanish3} implies that stochastic passivity
will lose when the desired state $\bm{x}^\dag$ makes
$\bm{h}(\bm{x}^\dag,\mathbbold{0}_{m})\neq \mathbbold{0}_{n\times
r}$ and the storage function is expected to be a {radially unbounded} Lyapunov
function, so it is impossible for one to use stochastic passivity
theory, and further, stochastic Lyapunov theorem to analyze the
globally asymptotical stability of $\Sigma_S$ at $\bm{x}^\dag$ in
the sense of probability.
\end{rem}

\subsection{Problem setting}
{The above analysis reveals that when
$\bm{h}(\bm{x}^\dag,\mathbbold{0}_{m})\neq \mathbbold{0}_{n\times
r}$, stochastic passivity will fail to capture the globally
asymptotical stability (in the probability sense) of a stochastic
differential system $\Sigma_S$ at the desired state $\bm{x}^\dag$,
often set as the equilibrium state $\bm{x}^*$ (if exists) in many
control problems. In fact, the nonzero diffusion term is frequently
encountered in many real stochastic systems, such as chemical
reaction networks, tracking systems, etc. One case is that the noise
is persistent in quite a few stochastic systems, which means that
for $\forall~ \bm{x}\in \mathbb{R}^n$ and $\forall~\bm{u}\in
\mathbb{U},~\bm{h}(\bm{x},\bm{u})\neq \mathbbold{0}_{n\times r}$ and
thus $\bm{x}^*$ does not exist at all; the other case is that some
special control purposes are served for, such as the desired state
$\bm{x}^\dag$ being not $\bm{x}^*$ so that
$\bm{h}(\bm{x}^\dag,\mathbbold{0}_{m})\neq \mathbbold{0}_{n\times
r}$, even if $\bm{x}^*$ exists. }

Apparently, the nonzero diffusion term in real stochastic systems
restricts greatly the applications of stochastic passivity theory, a
powerful tool for stabilization. However, what is even worse is that
it may lead to the system under consideration being not stable at
all in probability at the desired state, as stated in
\textit{Theorem} $2$. These two awkward situations motivate us to
find a new solution for stabilizing those stochastic systems with
nonzero diffusion term at the desired state. On the one hand, it is
impossible to stabilize some stochastic systems at any state in
probability, on the other hand, the excellent performance of
stochastic passivity is hoped to be used. Thus, we take a hack at
the next best way to address the current control problem, including
{seeking the convergence in distribution and ergodicity instead
of the convergence in probability}, and finding the stochastic
passivity behavior only outside a certain neighborhood of the
desired state instead of in the whole state domain.

%%%%%%%%%%%%%%%%%%%%%%%%%%%%%%%%%%%%%%%%%%%%%%%%%%%%%%%%%%%%%%%%%%%%%%%%%
\section{Stochastic weak passivity theory}
{The objective in this section is to present the theory of
stochastic weak passivity with which some stochastic systems with
nonzero diffusion term can be analyzed concerning the convergence of
the transition measure {and ergodicity}. This theoretical
framework includes some basic concepts related to stochastic weak
passivity, properties of invariant measure, and results for
stabilization which are parallel to those appearing in the
stochastic passivity theory.}
\subsection{Basic concepts} We firstly give the
definitions of convergence in distribution and {of ergodicity}.

\begin{definition}[Convergence in distribution and Ergodicity]
Assume a stochastic differential equation described by Eq.
(\ref{StochasticEquation}) to have an invariant measure $\pi$. If
there exists a subset of $\mathbb{R}^n$, denoted by
$\mathbb{R}^n_\pi$, such that for any Borel subset $\mathbb{A}$ with
zero $\pi$-measure boundary the equation
\begin{equation}\label{StabilityMeasure}
\lim_{t\to\infty}\mathcal{P}(t,\bm{x}(0),\mathbb{A})=\pi(\mathbb{A}),~
\forall~\bm{x}(0)\in \mathbb{R}^n_\pi
\end{equation}
is true, then the stochastic process is said to be locally
convergent in distribution. If $\mathbb{R}^n_\pi=\mathbb{R}^n$, then
the convergence in distribution is globally.

{If for any Borel subset $\mathbb{B}$ the state $\bm{x}(t)$ satisfies}
\begin{equation}{\label{Eq ergodiacity}}
\lim_{T\to\infty} \frac{1}{T}\int_{0}^{T}
\mathbbold{1}_{\left\{\bm{x}(t)\in \mathbb{B}\right\}} dt
=\pi(\mathbb{B}) ,~~~\mathrm{a.s.}~~ \forall~\bm{x}(0)\in \mathbb{R}^n_\pi
\end{equation}
{where ``$\mathrm{a.s.}$" represents ``almost surely" and}
\begin{equation*}
\mathbbold{1}_{\left\{\bm{x}(t)\in \mathbb{B}\right\}}= \left\{
\begin{array}{ll}
1  & \bm{x}(t)\in \mathbb{B},\\
0 & \bm{x}(t)\notin \mathbb{B}
\end{array}
\right.
\end{equation*}
{then the stochastic process is said to be locally ergodic.
Especially, when $\mathbb{R}^n_\pi=\mathbb{R}^n$, the ergodicity is
global.}
\end{definition}

Here, analogous to the definition of stability in probability, we
also distinguish the local and global notations to emphasize the
importance of the initial condition.

\begin{rem} {In the control view point, the convergence of the
transition measure and ergodicity both describe certain senses of stable
behaviors for stochastic systems. The former means that the
distribution of the state will converge to an invariant measure as
time goes infinite. Therefore, as long as the invariant measure is
shaped to fasten on a small region around the desired state, then
the state will evolve within this region with a large probability,
i.e., not to deviate from the desired point too far with a large
probability. The latter implies that the state evolution almost
always take place within the mentioned region. Even if the
trajectory sometimes run from the region, it will come back into the
region immediately.}
\end{rem}

{Clearly, the convergence of the transition measure and
ergodicity reveal that the state of a stochastic system almost
always evolves near the desired state if the invariant measure is
assigned properly. We define this behavior as stochastic asymptotic
weak stability.}

\begin{definition}[Stochastic Asymptotic Weak Stability] A stochastic differential
equation of Eq. (\ref{StochasticEquation}) is of
local$\backslash$global stochastic asymptotic weak stability if its
distribution locally$\backslash$globally converges to an invariant
measure {and its process is of local$\backslash$global
ergodicity}.
\end{definition}

Next, we define the stochastic weak passivity that serves for
stabilizing a stochastic differential system in weak sense. Note
that the loss of stochastic passivity mainly originates from the
nonzero diffusion term at the desired state, which further results
in some unexpected behaviors appearing around it. Thus, a naive idea
is to give up the stochastic passivity near the desired state, but
only to suggest it outside a neighborhood of the desired state.

\begin{definition}[Stochastic Weak Passivity]{\label{DefStochasticWeakPassivity}}
A stochastic differential system $\Sigma_S$, as described by Eq.
(\ref{StochasticSystem}), is said to be of stochastic weak passivity
if there exist a $\mathscr{C}^2(\mathbb{R}^n;\mathbb{R}_{\geq 0})$
function $V(\bm{x})$, i.e., the storage function, such that for
$\forall ~\bm{x}\in \mathbb{R}^n$ and $\|\bm{x}-\bm{x}_R\|_{2}\geq
R$ the following inequality holds
\begin{equation*}
\mathcal{L}[{V(\bm{x})}]\leq \bm{u}^\top\bm{y}
\end{equation*}
where {the state $\bm{x}_R\in \mathbb{R}^n$ is the sole minimum
point for $V(x)$} and $R\geq 0$ is called the stochastic passive
radius.
\end{definition}

%\begin{rem}
%When $R=0$, the condition
%$\|\bm{x}-\bm{x}_R\|_{2}\geq R$ is always true. At this point
%stochastic weak passivity is namely stochastic passivity.
%\textcolor{blue}{\sout{Hence, the stochastic passivity is a special case of the stochastic weak
%passivity.}}
%\textcolor{red}{Particularly, $\bm{x}_R$ is an unique state in the space where we hope to stabilize.}
%\end{rem}

Similar to the concept of the strict passivity, we may further
define strict stochastic weak passivity.

\begin{definition}[Strict Stochastic Weak Passivity]{\label{DefStrictStochasticWeakPassivity}}
Consider a stochastic weak passive system. Suppose that there exists
a positive constant $\delta$ such that for $\forall ~\bm{x}\in
\mathbb{R}^n$ and $\|\bm{x}-\bm{x}_R\|_{2}\geq R$
\begin{equation*}
\mathcal{L}[{V(\bm{x})}]\leq
\bm{u}^\top\bm{y}-\delta\|\boldsymbol{\xi}\|_2^{2}
\end{equation*}
The system is
\begin{itemize}
  \item strictly state stochastic weak passive if $\boldsymbol{\xi}=\bm{x}-\bm{x}_R$.
  \item strictly input stochastic weak passive if $\boldsymbol{\xi}=\bm{u}$.
  \item strictly output stochastic weak passive if $\boldsymbol{\xi}=\bm{y}$.
\end{itemize}
\end{definition}

\subsection{Properties of invariant measure}{Definition
$4.2$ reveals that the stochastic asymptotic weak stability is
concerned with the convergence in distribution of the state {and
ergodic behavior}. However, for a stochastic system, unlike its
equilibrium it is not quite obvious to know something about its
invariant measure, such as the existence, uniqueness, etc. We
separate this subsection to analyze the properties of the invariant
measure of the stochastic differential equation under consideration.
}

{In fact, it is not a new research issue to analyze the properties
of the invariant measure of a stochastic system
\cite{Khasminskii60,Stettner94}. A sufficient condition to say it
convergent in distribution was reported as follows.}

\begin{thm}[cf. \cite{Stettner94}]
If a right Markov process on $\mathbb{R}^{n}$ is strongly Feller,
i.e., $\forall ~t\textgreater 0$ the transition semigroup
$\mathcal{P}(t,\cdot,\cdot)$ transforms bounded Borel functions into
$\mathscr{C}(\mathbb{R}^{n})$, and moreover $\mathcal{P}$ is
irreducible, i.e., $\forall ~t\textgreater 0,~\forall~
\bm{x}\in\mathbb{R}^{n}$ and any open set $\mathcal{O}\neq
\varnothing$ there is $\mathcal{P}(t,\bm{x},\mathcal{O})>0$, then
any probability measure converges to the invariant measure (if
exists). Moreover the invariant measure (if exists) is equivalent to
each transition measure $\mathcal{P}(t,\bm{x},\cdot)$, $t>0$,
$\bm{x}\in\mathbb{R}^{n}$.
\end{thm}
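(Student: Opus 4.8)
The plan is to follow the classical Doob--Khasminskii route, separating an equivalence step from a convergence step. First I would fix $t>0$ and use the strong Feller property to observe that for every Borel set $\mathbb{A}$ the map $\bm{x}\mapsto\mathcal{P}(t,\bm{x},\mathbb{A})$ is continuous, being the image of the bounded Borel function $\mathbbold{1}_{\mathbb{A}}$ under the transition semigroup; hence the set $\mathbb{G}:=\{\bm{x}\in\mathbb{R}^n:\mathcal{P}(t,\bm{x},\mathbb{A})>0\}$ is open. If $\mathbb{G}\neq\varnothing$, irreducibility gives $\mathcal{P}(s,\bm{x},\mathbb{G})>0$ for every $\bm{x}$ and $s>0$, and the Chapman--Kolmogorov identity
\begin{equation*}
\mathcal{P}(t+s,\bm{x},\mathbb{A})=\int_{\mathbb{R}^n}\mathcal{P}(t,\bm{y},\mathbb{A})\,\mathcal{P}(s,\bm{x},\dd\bm{y})\geq\int_{\mathbb{G}}\mathcal{P}(t,\bm{y},\mathbb{A})\,\mathcal{P}(s,\bm{x},\dd\bm{y})>0
\end{equation*}
forces $\mathcal{P}(t+s,\bm{x},\mathbb{A})>0$ for all $\bm{x}$. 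Thus the null sets of the kernels do not depend on the starting point, so the measures $\mathcal{P}(t,\bm{x},\cdot)$, $t>0$, $\bm{x}\in\mathbb{R}^n$, are mutually equivalent.

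To reach the ``moreover'' claim I would integrate the invariance relation $\pi(\mathbb{A})=\int_{\mathbb{R}^n}\mathcal{P}(t,\bm{x},\mathbb{A})\,\pi(\dd\bm{x})$, which shows that $\pi(\mathbb{A})=0$ occurs exactly when $\mathcal{P}(t,\bm{x},\mathbb{A})=0$ for $\pi$-almost every $\bm{x}$; combined with the mutual equivalence just established, this yields $\pi\sim\mathcal{P}(t,\bm{x},\cdot)$ for every $t>0$ and $\bm{x}$, i.e. the invariant measure is equivalent to each transition measure.

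For the convergence I would invoke Doob's theorem. Since a Markov operator is a contraction in the total-variation norm, $t\mapsto\|\mathcal{P}(t,\bm{x},\cdot)-\pi\|_{\mathrm{TV}}$ is non-increasing; using the equivalence above I would produce a uniform Doob minorization, i.e. a time $t_0>0$ and a constant $\gamma\in(0,1)$ with $\|\mathcal{P}(t_0,\bm{x},\cdot)-\pi\|_{\mathrm{TV}}\leq\gamma$ for all $\bm{x}$, and then iterate along multiples of $t_0$ to obtain geometric decay. This gives $\mathcal{P}(t,\bm{x},\cdot)\to\pi$ in total variation for every $\bm{x}$, hence for every initial probability measure by integration, and in particular the weak convergence in Eq.~(\ref{StabilityMeasure}) on Borel sets with $\pi$-null boundary. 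Uniqueness of $\pi$ is then immediate: any second invariant measure $\pi'$ satisfies $\pi'(\mathbb{A})=\int_{\mathbb{R}^n}\mathcal{P}(t,\bm{x},\mathbb{A})\,\pi'(\dd\bm{x})\to\pi(\mathbb{A})$ by dominated convergence, so $\pi'=\pi$.

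The main obstacle I anticipate is the last step, upgrading the pointwise positivity furnished by irreducibility and strong Feller to the \emph{uniform} minorization $\|\mathcal{P}(t_0,\bm{x},\cdot)-\pi\|_{\mathrm{TV}}\leq\gamma<1$ valid for all $\bm{x}$ at once. Pointwise equivalence does not by itself control the total-variation overlap uniformly in the starting point, and closing this gap generally requires an extra compactness or tightness argument, or a local minorization on a recurrent set together with finiteness of the return time to it. This is precisely where the strong Feller and irreducibility hypotheses must be used in full strength, and it is the technical core of Doob's theorem.
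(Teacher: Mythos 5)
First, note that the paper offers no proof of this statement: it is imported verbatim from Stettner \cite{Stettner94} (it is essentially Doob's theorem), so there is no internal argument to compare yours against. Your first two steps are sound and standard: the strong Feller property makes $\bm{x}\mapsto\mathcal{P}(t,\bm{x},\mathbb{A})$ continuous, irreducibility plus Chapman--Kolmogorov propagates positivity, and integrating the invariance identity transfers the resulting mutual equivalence to $\pi$. (One small omission: your argument only pushes positivity \emph{forward} in time, from $t$ to $t+s$; to get equivalence of $\mathcal{P}(t,\bm{x}_1,\cdot)$ and $\mathcal{P}(t,\bm{x}_2,\cdot)$ at the \emph{same} time $t$, or at smaller times, you must first use Chapman--Kolmogorov to locate a point $\bm{y}_0$ with $\mathcal{P}(t-s,\bm{y}_0,\mathbb{A})>0$ and run the open-set argument at the shorter time $t-s$.)

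The genuine gap is in the convergence step, and you have correctly located it but misdiagnosed the cure. A uniform minorization $\sup_{\bm{x}}\|\mathcal{P}(t_0,\bm{x},\cdot)-\pi\|_{\mathrm{TV}}\leq\gamma<1$ amounts to uniform geometric ergodicity, which is strictly stronger than the conclusion of the theorem and is simply false under the stated hypotheses: an Ornstein--Uhlenbeck process on $\mathbb{R}^n$ is strongly Feller and irreducible with a Gaussian invariant measure, yet for each fixed $t_0$ the kernel $\mathcal{P}(t_0,\bm{x},\cdot)$ is a Gaussian whose mean escapes to infinity as $\|\bm{x}\|_2\to\infty$, so its total-variation distance to $\pi$ approaches the maximal value and no uniform $\gamma<1$ exists. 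Hence no amount of work closes the gap along the route you chose; and contrary to your diagnosis, no extra compactness or drift hypothesis is needed either, because the classical proof obtains only \emph{pointwise} (in $\bm{x}$) total-variation convergence and does so by a different mechanism. One standard version: the equivalence $\mathcal{P}(t_0,\bm{x},\cdot)\sim\pi$ for all $\bm{x}$, which you already established, makes the $t_0$-skeleton a positive, aperiodic Harris chain, and the aperiodic ergodic theorem then yields $\|\mathcal{P}(nt_0,\bm{x},\cdot)-\pi\|_{\mathrm{TV}}\to 0$ for every $\bm{x}$; the monotonicity of the total-variation distance in $t$, which you noted, upgrades this to continuous time, and integrating against an arbitrary initial law gives the stated convergence of probability measures together with uniqueness of $\pi$. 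Alternatively one can argue via triviality of the invariant $\sigma$-field and backward martingale convergence of $\mathcal{P}(t,\bm{x},\Gamma)$. Replacing your uniform-minorization step by one of these arguments completes the proof.
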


{This theorem provides a solution to capture the convergence in
distribution for a right Markov process. However, it is not easy to
verify the conditions of ``strongly Feller" and ``irreducible" in
practical applications. As an alternative, Khasminskii
\cite{Khasminskii11} proposed a more practical way to say a
stochastic system to be convergent in distribution, which works if a
Markov process is ``mix sufficiently well" in an open domain
$\mathcal{O}$ and the recurrent time is finite (cf.
\textit{Theorems} $4.1$, $4.3$ and \textit{Corollary} $4.4$ in
\cite{Khasminskii11}). Here, we will combine this practical way with
Zakai's work \cite{zakai69}, and give a Lyapunov criterion to say
stochastic asymptotic weak stability. However, the drift and
diffusion terms of the stochastic system are set to have local
Lipschitz continuity instead of global Lipschitz continuity in
\cite{zakai69}. }

\begin{lem}[Finite Mean Recurrent Time \cite{zakai69}]{\label{lem finite mean recurrent time}}
For a
stochastic differential equation (\ref{StochasticEquation}) having a
global solution $\bm{x}(t)$, if there exist a function $V(\bm{x})\in
\mathscr{C}^{2}(\mathbb{R}^{n};\mathbb{R}_{\geq 0})$, a state
$\bm{x}_{\tilde{R}}$, and two positive numbers $\tilde{R}$ and $k$
such that
\begin{equation}\label{WeakStabilityLya}
\mathcal{L} [V(\bm{x})]<-k, ~\forall~ \|\bm{x}(t)-
\bm{x}_{\tilde{R}}\|_{2}\geq \tilde{R}
\end{equation}
then for all $\bm{x}(0)\in\mathbb{R}^n$ the first passage time from
$\bm{x}(0)$ to the sphere $\|\bm{x}(t)-\bm{x}_{\tilde{R}}\|_{2}\leq
\tilde{R}$, denoted by $\tau$, satisfies
\begin{equation}{\label{IneqFMRT}}
\mathrm{E}\big[\tau|\bm{x}(0)\big]\leq\frac{V\big(\bm{x}(0)\big)}{k}
\end{equation}
\end{lem}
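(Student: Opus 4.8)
The plan is to apply It\^{o}'s (Dynkin's) formula to the storage function $V$ along the trajectory stopped at $\tau$, and then to exploit the strict sign condition (\ref{WeakStabilityLya}) together with the nonnegativity of $V$. First I would dispose of the trivial case: if $\bm{x}(0)$ already lies in the ball $\|\bm{x}-\bm{x}_{\tilde R}\|_2\le\tilde R$ then $\tau=0$ and (\ref{IneqFMRT}) holds since $V\ge 0$; so assume $\bm{x}(0)$ lies outside. Because $V\in\mathscr{C}^2(\mathbb{R}^n;\mathbb{R}_{\geq 0})$, It\^{o}'s formula gives, for any fixed $t>0$,
\[
V\big(\bm{x}(t\wedge\tau)\big)=V\big(\bm{x}(0)\big)+\int_0^{t\wedge\tau}\mathcal{L}[V(\bm{x}(s))]\,\dd s+\int_0^{t\wedge\tau}\frac{\partial V}{\partial\bm{x}}\bm{h}(\bm{x}(s))\,\dd\boldsymbol{\omega}.
\]
For every $s<\tau$ the trajectory still satisfies $\|\bm{x}(s)-\bm{x}_{\tilde R}\|_2\ge\tilde R$, so by hypothesis $\mathcal{L}[V(\bm{x}(s))]<-k$, whence the drift integral is bounded above by $-k(t\wedge\tau)$.

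The key technical step is to argue that the stochastic integral drops out in expectation, yielding $\mathrm{E}\big[V(\bm{x}(t\wedge\tau))\big]\le V(\bm{x}(0))-k\,\mathrm{E}[t\wedge\tau]$. Since the process may wander arbitrarily far before returning to the ball, the integrand $\frac{\partial V}{\partial\bm{x}}\bm{h}$ is not square-integrable a priori and the stochastic integral is only a local martingale. I would therefore localize by introducing the exit times $\tau_m=\inf\{t:\|\bm{x}(t)-\bm{x}_{\tilde R}\|_2\ge m\}$ for $m>\tilde R$ and working with $t\wedge\tau\wedge\tau_m$. On $[0,t\wedge\tau\wedge\tau_m]$ the path is confined to a bounded annulus, so the integrand is bounded, the stochastic integral is a genuine martingale of zero mean, and Dynkin's formula combined with $V\ge 0$ gives
\[
0\le\mathrm{E}\big[V(\bm{x}(t\wedge\tau\wedge\tau_m))\big]\le V\big(\bm{x}(0)\big)-k\,\mathrm{E}[t\wedge\tau\wedge\tau_m],
\]
which rearranges to $\mathrm{E}[t\wedge\tau\wedge\tau_m]\le V(\bm{x}(0))/k$.

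It then remains to remove the two cutoffs. The hypothesis that (\ref{StochasticEquation}) has a global solution means the solution is of non-explosion, so $\tau_m\to\infty$ almost surely as $m\to\infty$; letting $m\to\infty$ and invoking monotone convergence gives $\mathrm{E}[t\wedge\tau]\le V(\bm{x}(0))/k$ uniformly in $t$. Finally, letting $t\to\infty$, the random variables $t\wedge\tau$ increase monotonically to $\tau$, so monotone convergence once more yields $\mathrm{E}[\tau\,|\,\bm{x}(0)]\le V(\bm{x}(0))/k$, which is exactly (\ref{IneqFMRT}). The main obstacle is precisely this localization: because the sign condition (\ref{WeakStabilityLya}) is only assumed on the \emph{unbounded} exterior of the ball, one cannot directly claim the martingale part has zero expectation, and the non-explosion property supplied by the global-solution hypothesis is exactly what licenses sending $m\to\infty$ and interchanging the limits.
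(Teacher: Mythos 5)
Your proposal follows essentially the same route as the paper's proof: Dynkin's formula applied at $t\wedge\tau$, the bound $\mathcal{L}[V]<-k$ outside the ball, nonnegativity of $V$, and two applications of monotone convergence. The only difference is that you explicitly justify the vanishing of the martingale term by localizing with the exit times $\tau_m$ and then using non-explosion to send $m\to\infty$ --- a step the paper's proof takes for granted --- so this is a careful refinement of the same argument rather than a different one.
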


\begin{proof}
At the time of $t\wedge\tau$, by Dynkin's formula we have
\begin{eqnarray*}
\text{E}\Big[V\big(\bm{x}(t\wedge\tau)\big)~\Big|~\bm{x}(0)\Big]&=& V\big(\bm{x}(0)\big)+\text{E}\left[\int_{0}^{t\wedge\tau}\mathcal{L}\big[V\big(\bm{x}(s)\big)\big]ds\right]\\
&\leq&
V\big(\bm{x}(0)\big)-\text{E}\Big[\int_{0}^{t\wedge\tau}kds\Big]
\end{eqnarray*}
Note that $V(\bm{x})\geq 0$, so
$\mathrm{E}\big[t\wedge\tau\big|\bm{x}(0)\big]\leq\frac{V\left(\bm{x}(0)\right)}{k}$.
The inequality (\ref{IneqFMRT}) naturally holds due to the monotone
convergence.
\end{proof}

\begin{thm}{\label{WeakStability}}
For a stochastic equation in the form of (\ref{StochasticEquation}),
if there exists a nonnegative function $V(\bm{x})\in
\mathscr{C}^{2}(\mathbb{R}^{n};\mathbb{R}_{\geq 0})$ satisfying the
following conditions:
\begin{itemize}
  \item $\lim_{\|\bm{x}\|_{2}\to\infty} V(\bm{x})=\infty$;
  \item $\exists~\bm{x}_{\tilde{R}}\in\mathbb{R}^n,~ \tilde{R}\textgreater 0~and~k\textgreater 0,~if~\|\bm{x}-\bm{x}_{\tilde{R}}\|_{2}\geq \tilde{R},~then~\mathcal{L}
  [V(\bm{x})]<-k$;
  \item $\exists~\epsilon\textgreater 0$, if $\|\bm{x}-\bm{x}_{\tilde{R}}\|_2< \tilde{R}+\epsilon$, then $rank(\bm{h}(\bm{x})\bm{h}^{\top}(\bm{x}))=n$.
  %, i.e., the smallest eigenvalue of the is $\boldsymbol{\sigma}(\bm{x})\boldsymbol{\sigma}^{\top}(\bm{x})$ is bounded away from zero in this domain.
\end{itemize}
then there is a unique finite invariant measure $\pi$ such that for
any Borel subset $\mathbb{A}$ with zero $\pi$-measure boundary
\begin{equation}{\label{TransitionMeasureConvergence}}
\lim_{t\to\infty}\mathcal{P}(t,\bm{x}(0),\mathbb{A})=\pi(\mathbb{A}),~\forall~
\bm{x}(0)\in\mathbb{R}^{n}
\end{equation}
{and for any Borel subset $\mathbb{B}$}
\begin{equation}{\label{Eq global ergodicity}}
\lim_{T\to\infty} \frac{1}{T}\int_{0}^{T} \mathbbold{1}_{\left\{\bm{x}(t)\in \mathbb{B}\right\}} \dd t =\pi(\mathbb{B}), ~~~\mathrm{a.s.}~~~\forall~
\bm{x}(0)\in\mathbb{R}^{n}
\end{equation}
i.e., Eq. (\ref{StochasticEquation}) being globally asymptotical
stable in weak sense.
\end{thm}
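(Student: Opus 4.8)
The plan is to verify that the three hypotheses are exactly what is needed to invoke, in sequence, Narita's non-explosion criterion (\textit{Lemma} 1), Zakai's finite-mean-recurrence estimate (\textit{Lemma} 2), and Khasminskii's ergodic machinery for recurrent diffusions; chained together these deliver the existence, uniqueness, weak convergence, and ergodicity asserted by (\ref{TransitionMeasureConvergence}) and (\ref{Eq global ergodicity}). The first step is to secure a global solution, which \textit{Lemma} 2 presupposes. Taking the time-independent choice $U_T=V$ in \textit{Lemma} 1, the drift condition $\mathcal{L}[V(\bm{x})]<-k<0$ for $\|\bm{x}-\bm{x}_{\tilde{R}}\|_2\geq\tilde{R}$ certainly yields $\mathcal{L}[V]\leq c_T$ for any $c_T>0$ outside a large enough ball, while the radial unboundedness $\lim_{\|\bm{x}\|_2\to\infty}V(\bm{x})=\infty$ supplies (\ref{Non-explosion2}); hence the solution is non-explosive and therefore global.

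With a global solution in hand, I would apply \textit{Lemma} 2 verbatim to the drift condition: the first passage time $\tau$ from any $\bm{x}(0)$ into the closed ball $\mathbb{K}:=\{\bm{x}:\|\bm{x}-\bm{x}_{\tilde{R}}\|_2\leq\tilde{R}\}$ satisfies $\mathrm{E}[\tau\,|\,\bm{x}(0)]\leq V(\bm{x}(0))/k<\infty$ by (\ref{IneqFMRT}). Thus the diffusion is positive recurrent relative to $\mathbb{K}$, with mean return time bounded uniformly on compacta. Next I would exploit the non-degeneracy condition $\operatorname{rank}(\bm{h}(\bm{x})\bm{h}^{\top}(\bm{x}))=n$ on the enlarged ball $\mathcal{O}:=\{\bm{x}:\|\bm{x}-\bm{x}_{\tilde{R}}\|_2<\tilde{R}+\epsilon\}$ to show that the process ``mixes sufficiently well'' there. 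Full rank means $\bm{h}\bm{h}^{\top}$ is uniformly positive definite on $\mathcal{O}$, so the generator $\mathcal{L}$ is uniformly elliptic on this open set; by standard parabolic theory the transition function then admits a smooth, strictly positive density on $\mathcal{O}$, whence the restricted process is strongly Feller and irreducible in the sense of \textit{Theorem} 4. This is precisely the local mixing hypothesis required by Khasminskii's \textit{Theorems} $4.1$, $4.3$ and \textit{Corollary} $4.4$.

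Finally, combining positive recurrence to $\mathbb{K}$ with mixing on $\mathcal{O}\supset\mathbb{K}$, Khasminskii's construction of the embedded Markov chain on successive excursions produces a unique stationary measure $\pi$; the uniform bound on the mean return time forces $\pi$ to be finite, hence normalizable to a probability measure. The convergence (\ref{TransitionMeasureConvergence}) for every initial point and every $\pi$-continuity set $\mathbb{A}$ (Borel sets with $\pi$-null boundary) is the weak-convergence conclusion of that theory, and the time-average identity (\ref{Eq global ergodicity}) follows from the attendant strong law of large numbers for positive-recurrent Markov processes.

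I expect the decisive difficulty to be this last amalgamation rather than any single ingredient. Since $\bm{h}$ is permitted to degenerate \emph{outside} $\mathcal{O}$, the strong Feller and irreducibility arguments of \textit{Theorem} 4 cannot be applied on all of $\mathbb{R}^n$; consequently uniqueness of $\pi$ and convergence from an \emph{arbitrary} $\bm{x}(0)$ cannot be read off from a global ellipticity statement. Instead they must be extracted by coupling the local mixing available only on $\mathcal{O}$ to the global recurrence estimate of the second step, exactly through Khasminskii's cycle decomposition: recurrence guarantees that every trajectory eventually enters $\mathbb{K}\subset\mathcal{O}$ where regeneration occurs, and the uniform return-time bound controls the resulting renewal structure. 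Making this coupling rigorous under only local Lipschitz continuity, rather than the global Lipschitz assumption of \cite{zakai69}, is where the careful work lies.
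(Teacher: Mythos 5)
Your proposal is correct and follows essentially the same route as the paper: \textit{Lemma} 1 with $U_T=V$ for non-explosion, \textit{Lemma} 2 for the finite mean recurrence time (uniformly bounded on compacta), the nondegeneracy of $\bm{h}\bm{h}^{\top}$ on the enlarged ball for local irreducibility (the paper invokes the strong maximum principle and \textit{Lemma} 4.1 of \cite{Khasminskii11} where you invoke uniform ellipticity and positive transition densities), and then Khasminskii's embedded-cycle Markov chain (\textit{Theorems} 4.1--4.3 and \textit{Corollary} 4.4 of \cite{Khasminskii11}) to obtain the unique invariant measure, the convergence on $\pi$-continuity sets, and the ergodic time-average identity. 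The ``decisive difficulty'' you flag at the end is precisely the step the paper delegates wholesale to Khasminskii's cycle construction.
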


\begin{proof}
According to \textit{Lemmas} $1$ and $2$, the first two conditions
could suggest that Eq. (\ref{StochasticEquation}) has a unique
global solution and for any initial state $\tilde{\bm{x}}$
satisfying $\tilde{\bm{x}}\in
\{\tilde{\bm{x}}|\|\tilde{\bm{x}}-\bm{x}_{\tilde{R}}\|_2\geq
\tilde{R}\}$, we have
$$\mathrm{E}\big[\tau\big|\tilde{\bm{x}}\big]\leq\frac{V\left(\tilde{\bm{x}}\right)}{k}$$
Hence, for any compact subset $\mathcal{K}\in \mathbb{R}^{n}$ we get
\begin{equation*}
\sup_{\tilde{\bm{x}}\in\mathcal{K}}
\text{E}[\tau|\tilde{\bm{x}}]\leq
\sup_{\tilde{\bm{x}}\in\mathcal{K}}\frac{V\big(\tilde{\bm{x}}\big)}{k}<\infty.
\end{equation*}
{Further based on the strong maximum principle for solutions of
elliptic equations, the third condition implies the system
\mbox{(\ref{StochasticEquation})} to be irreducible (cf. \textit{Lemma
4.1} in \mbox{\cite{Khasminskii11}}), which combining the above
inequality suggests that an ergodic Markov chain can be induced for
this stochastic process by constructing a circle. The ergoic
property of the Markov chain will ensure that there exists a sole
invariant measure to which the transition measure converges (cf.
\textit{Theorems} $4.1$ and $4.3$, and \textit{Corollary} $4.4$ in
\mbox{\cite{Khasminskii11}}), and the ergodicity of the system under
consideration is true (cf. \textit{Theorem} $4.2$ in
\mbox{\cite{Khasminskii11}}). Namely, Eqs.
\mbox{(\ref{TransitionMeasureConvergence})} and \mbox{(\ref{Eq global
ergodicity})} hold}.
\end{proof}

The theorem provides a Lyapunov function based method to address the
issues of the existence and uniqueness of the invariant measure
together with the convergence of the transition probability measure
{and ergodicity}
for a stochastic differential equation, so it can be associated with
the Lyapunov stability theory conveniently.

\begin{rem}
{There are two differences between the above theorem
and the corresponding result in \cite{zakai69}. One is that the
non-singularity of $\bm{h}(\bm{x})\bm{h}^{\top}(\bm{x})$ is not
necessary in the whole state space but only holds in an open ball
($\|\bm{x}-\bm{x}_{\tilde{R}}\|_2< \tilde{R}+\epsilon$). The latter
is believed to be achieved more easily in practice. The other is
that the storage function must be {radially unbounded} here. In fact, this is not
a necessary condition, which can be removed if the drift term and
diffusion term in the stochastic equation are assumed to be globally
Lipschitz continuous. }
\end{rem}

{For a stochastic asymptotic weak stable system, to ensure the state to evolve
within a small region around the desired point, the invariant measure needs to be assignable or at least partially shaped by the control to fasten on this region. In the sequel, we will prove that the invariant measure can be shaped purposefully by controlling the change rates of the nonnegative function $V(\bm{x})$ and the radius $\tilde{R}$ of the ball $\|\bm{x}-\bm{x}_{\tilde{R}}\|_{2}\geq \tilde{R}$.}

\begin{lem}{\label{lem stop time estimation1}}
{For a stochastic differential equation \mbox{(\ref{StochasticEquation})} admitting a
global solution $\bm{x}(t)$, if $\exists$ $V(\bm{x})\in
\mathscr{C}^{2}(\mathbb{R}^{n};\mathbb{R}_{\geq 0})$ and $k,C\in\mathbb{R}_{\textgreater 0}$ such that}
\begin{equation*}
\mathcal{L} V(\bm{x})\leq \left\{
\begin{array}{ll}
-k  & \forall~ \bm{x}\in\left\{\bm{x}~|~V(\bm{x})\geq V_{1}\right\},\\
C & \forall~ \bm{x}\in\mathbb{R}^{n}
\end{array}
\right.
\end{equation*}
{and}
\begin{equation}{\label{eq infinite loops}}
\text{P}(\tau_{2i}-\tau_{2i-1}=\infty)=0 \quad \forall~ i\in\mathbb{Z}_{\textgreater 0}
\end{equation}
{then for any $i\geq 1,~i\in\mathbb{Z}_{\textgreater 0}$ we have}
$$~~~~~~~~~~~\mathrm{(1)~ E}\left[\tau_{2i}-\tau_{2i-1}\right]\geq \frac{\left(V_{2}-V_{1}\right)^{2}}{2 C V_{2}};
~\mathrm{(2)~ E}\left[\tau_{2i-1}-\tau_{2i-2}\right]\leq \frac{V_{2}-V_{1}}{k};~~\mathrm{and}$$
$$\mathrm{(3)~ E}\left[\liminf_{T\to\infty}~
\frac{1}{T}\int_{0}^{T}\mathbbold{1}_{\left\{V\left(\bm{x}(t)\right)\geq V_{2}\right\}} \dd t\right]
\leq \frac{2 C V_{2}}{2 C V_{2}+k\left(V_{2}-V_{1}\right)}$$
{where $V_1,V_2\in\mathbb{R}_{\textgreater 0}$ satisfying $V_{2}>V_{1}$, $\tau_{2i-2}$ represents the first time at which the state hits the region $\{V(x)\geq V_{2}\}$ after $\tau_{2i-3}$,
$\tau_{2i-1}$ is the first time at which the trajectory reaches
the surface of $\{V(x)\leq V_{1}\}$ after $\tau_{2i-2}$, and $\tau_{-1}$ means the initial time.}
\end{lem}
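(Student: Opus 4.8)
The plan is to split each trajectory into the alternating \emph{down} excursions $[\tau_{2i-2},\tau_{2i-1}]$, running from the level set $\{V=V_2\}$ to $\{V=V_1\}$, and \emph{up} excursions $[\tau_{2i-1},\tau_{2i}]$, running from $\{V=V_1\}$ back to $\{V=V_2\}$, and to apply a Dynkin-formula estimate to each. By continuity of $t\mapsto V(\bm{x}(t))$ together with the definitions of the hitting times, $V(\bm{x}(\tau_{2i-2}))=V_2$ at every even time and $V(\bm{x}(\tau_{2i-1}))=V_1$ at every odd time; moreover $V>V_1$ throughout a down excursion (so that $\mathcal{L}V\le -k$ is in force there) while $0\le V\le V_2$ throughout an up excursion. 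Condition (\ref{eq infinite loops}) makes each up excursion terminate almost surely, which, combined with the finite expected length of the down excursions proved below, shows that all the $\tau_j$ are finite and increase to $\infty$; it also lets me pass every Dynkin estimate to the limit by stopping at $t\wedge\tau$ and invoking monotone/dominated convergence, exactly as in the proof of \textit{Lemma} \ref{lem finite mean recurrent time}.

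For (2) I would apply Dynkin's formula to $V$ on a down excursion: since $\mathcal{L}V\le -k$ while $V>V_1$, the stopped expectation gives $V_1-V_2=\mathrm{E}\big[\int_{\tau_{2i-2}}^{\tau_{2i-1}}\mathcal{L}V\,\dd s\big]\le -k\,\mathrm{E}[\tau_{2i-1}-\tau_{2i-2}]$, which rearranges to $\mathrm{E}[\tau_{2i-1}-\tau_{2i-2}]\le (V_2-V_1)/k$. For (1) the crude bound $\mathrm{E}[\tau_{2i}-\tau_{2i-1}]\ge (V_2-V_1)/C$ follows at once from $\mathcal{L}V\le C$, but to land exactly on the stated constant I would instead apply Dynkin to the concave auxiliary function $g(v)=v-v^2/(2V_2)$. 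On an up excursion $V\in[0,V_2]$, so $g'(V)=1-V/V_2\in[0,1]$ and $g''\equiv -1/V_2<0$; consequently $\mathcal{L}[g(V)]=g'(V)\mathcal{L}V+\tfrac12 g''(V)\,\|\bm{h}^\top\nabla V\|_2^2\le C$, the nonnegative diffusion term being annihilated by the concavity. As $g(V_2)-g(V_1)=(V_2-V_1)^2/(2V_2)$, the stopped Dynkin estimate yields $\mathrm{E}[\tau_{2i}-\tau_{2i-1}]\ge (V_2-V_1)^2/(2CV_2)$. Both estimates in fact hold conditionally on the excursion's starting point, uniformly over the relevant level set, a feature I will use in (3).

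For (3) I would decompose $[0,T]$ along the cycles. Because the indicator $\mathbbold{1}_{\{V\ge V_2\}}$ vanishes on the initial segment $[0,\tau_0]$ and on every up excursion (where $V<V_2$), the time spent in $\{V\ge V_2\}$ up to $\tau_{2m-1}$ is at most $\sum_{i=1}^m D_i$, with $D_i=\tau_{2i-1}-\tau_{2i-2}$, whereas $\tau_{2m-1}\ge \sum_{i=1}^m D_i+\sum_{i=1}^{m-1}U_i$ with $U_i=\tau_{2i}-\tau_{2i-1}$. Evaluating the time average along the subsequence $T=\tau_{2m-1}$ therefore gives the pathwise bound $\liminf_{T\to\infty}\tfrac1T\int_0^T\mathbbold{1}_{\{V\ge V_2\}}\dd t\le \liminf_m \frac{\sum_{i\le m}D_i}{\sum_{i\le m}D_i+\sum_{i<m}U_i}$, and since $a/(a+b)$ increases in $a$ and decreases in $b$ it suffices to control the two partial sums. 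This conversion of the ratio of random sums into the ratio of the per-cycle bounds is the main obstacle. I would supply it through the ergodic theorem applied to the embedded Markov chain $\{\bm{x}(\tau_{2i-1})\}$ on $\{V=V_1\}$, whose positive recurrence and irreducibility follow from the finite-mean recurrence estimate and the nondegeneracy hypothesis exactly as in the proof of \textit{Theorem} \ref{WeakStability}; this produces almost sure limits satisfying $\lim_m\tfrac1m\sum_{i\le m}D_i\le (V_2-V_1)/k$ and $\lim_m\tfrac1m\sum_{i<m}U_i\ge (V_2-V_1)^2/(2CV_2)$ (the latter possibly $+\infty$, in which case the fraction is $0$). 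Monotonicity then forces the limiting ratio below $2CV_2/\big(2CV_2+k(V_2-V_1)\big)$, and as the time average lies in $[0,1]$ its $\liminf$ converges almost surely to this deterministic constant, so taking expectations (via Fatou's lemma if the limit is only shown to be bounded) gives assertion (3). The delicate points that must be handled with care are the regenerative/ergodic step and the interchange of the $\liminf$ with the expectation.
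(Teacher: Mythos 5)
Your treatments of parts (1) and (2) are correct. Part (2) coincides with the paper's proof (Dynkin's formula on the down excursion where $\mathcal{L}V\le -k$). For part (1) you take a genuinely different route: the paper stops $V$ itself at $t\wedge\tau_{2i}$, uses $\mathrm{E}\left[V\left(\bm{x}(t\wedge\tau_{2i})\right)\right]\ge V_{2}\,\mathrm{P}(\tau_{2i}\le t)$ to obtain the tail bound $\mathrm{P}(\tau_{2i}-\tau_{2i-1}\le s)\le (V_{1}+Cs)/V_{2}$, and integrates the tail; your concave auxiliary function $g(v)=v-v^{2}/(2V_{2})$, for which $g(V_{2})-g(V_{1})=(V_{2}-V_{1})^{2}/(2V_{2})$, reaches the same constant in a single Dynkin step, with the diffusion contribution killed by $g''<0$ and the drift absorbed via $g'\in[0,1]$ and $C>0$. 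Both are valid; yours is arguably cleaner.

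Part (3) contains a genuine gap. Your argument hinges on the ergodic theorem for the embedded chain $\{\bm{x}(\tau_{2i-1})\}$, whose irreducibility and positive recurrence you propose to import from ``the nondegeneracy hypothesis as in the proof of \textit{Theorem} \ref{WeakStability}.'' But \textit{Lemma} \ref{lem stop time estimation1} has no nondegeneracy hypothesis: its only assumptions are the two bounds on $\mathcal{L}V$ and Eq.~(\ref{eq infinite loops}). The rank condition on $\bm{h}\bm{h}^{\top}$ belongs to the theorems in which this lemma is later applied (where ergodicity is obtained separately from \textit{Theorem} \ref{WeakStability}); the lemma itself must be proved without it, and without it the embedded chain need not be irreducible nor admit a stationary law. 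Moreover, even granting ergodicity, the uniform per-excursion bounds $\mathrm{E}[D_i\mid\mathcal{F}_{\tau_{2i-2}}]\le (V_{2}-V_{1})/k$ and $\mathrm{E}[U_i\mid\mathcal{F}_{\tau_{2i-1}}]\ge (V_{2}-V_{1})^{2}/(2CV_{2})$ are first-moment statements only; they do not by themselves yield the almost sure Ces\`{a}ro limits $\lim_m m^{-1}\sum_{i\le m}D_i\le (V_{2}-V_{1})/k$ and $\lim_m m^{-1}\sum_{i<m}U_i\ge (V_{2}-V_{1})^{2}/(2CV_{2})$ that your argument needs: a law of large numbers for non-identically-distributed excursion lengths requires control beyond the first moment, or integrability of the cycle lengths under the stationary measure together with Harris recurrence, none of which you establish. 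The paper instead stays entirely at the level of expectations: it asserts from (1) and (2) that $\mathrm{E}\bigl[\sum_{i=1}^{j}(\tau_{2i}-\tau_{2i-1})/\sum_{i=1}^{j}(\tau_{2i+1}-\tau_{2i})\bigr]\ge k(V_{2}-V_{1})/(2CV_{2})$ for every finite $j$, passes to $\limsup_j$ by Fatou, and combines this with the pathwise bound $\frac{1}{T}\int_{0}^{T}\mathbbold{1}_{\{V\ge V_{2}\}}\,\dd t\le \frac{\tau_{1}-\tau_{0}}{T}+\bigl(1+\sum(\tau_{2i}-\tau_{2i-1})/\sum(\tau_{2i+1}-\tau_{2i})\bigr)^{-1}$, so that no ergodicity or strong-law input is invoked inside the lemma. (That expectation-of-a-ratio step is itself stated rather tersely in the paper, but it is the intended mechanism.) To repair your route you would need either to add the missing hypotheses to the lemma or to reformulate step (3) so that only expectations of ratios of partial sums are required.
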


\begin{proof}
{(1) According to Dynkin's formula, for any $i\geq 1,~i\in\mathbb{Z}_{\textgreater 0}$}
\begin{eqnarray*}
\text{E} \left[V\left(\bm{x}(t\wedge\tau_{2i})\right)\right]
&=&V\left(\bm{x}(\tau_{2i-1})\right)
+\text{E} \left[\int_{\tau_{2i-1}}^{t\wedge\tau_{2i}} \mathcal{L}V\left(\bm{x}(s)\right) ds\right] \\
&\leq&V\left(\bm{x}(\tau_{2i-1})\right)
+\text{E}\left[ \int_{\tau_{2i-1}}^{t\wedge\tau_{2i}} C ds\right]\\
&\leq&V_{1}
+C\left(t-{\tau_{2i-1}}\right)
\end{eqnarray*}
Also, since
\begin{eqnarray*}
\text{E} \left[V\left(\bm{x}(t\wedge\tau_{2i})\right)\right]&=&\text{E} \left[V\left(\bm{x}(t)\right)\right]\text{P}\left(\tau_{2i}\textgreater t\right)+\text{E} \left[V\left(\bm{x}(\tau_{2i})\right)\right]\text{P}\left(\tau_{2i}\leq t\right)\geq V_{2} \text{P}\left(\tau_{2i}\leq t\right)\\
\end{eqnarray*}
we have
\begin{equation*}
\text{P}\left(\tau_{2i}\leq t\right)
\leq\frac{\text{E} \left[V\left(\bm{x}(t\wedge\tau_{2i})\right)\right]}{V_{2}}
\leq\frac{V_{1}+C\left(t-{\tau_{2i-1}}\right)}{V_{2}}
\end{equation*}
Therefore, we get
\begin{eqnarray*}
\text{E}\left[\tau_{2i}-\tau_{2i-1}\right]
&=&\int_{0}^{\infty} \text{P}\left(\tau_{2i}-\tau_{2i-1} > s \right) ds
\geq \int_{0}^{\frac{V_{2}-V_{1}}{C}} \text{P}\left(\tau_{2i}> s+\tau_{2i-1} \right) ds\\
&\geq& \int_{0}^{\frac{V_{2}-V_{1}}{C}} 1-\frac{V_{1}+Cs}{V_{2}}ds=\frac{\left(V_{2}-V_{1}\right)^{2}}{2 C V_{2}}
\end{eqnarray*}

{(2) On the other side, for any $i>1$ we have}
\begin{eqnarray*}
\text{E} \left[V\left(\bm{x}(t\wedge\tau_{2i-1})\right)\right]
&=&V\left(\bm{x}(\tau_{2i-2})\right)
+\text{E} \left[\int_{\tau_{2i-2}}^{t\wedge\tau_{2i-1}} \mathcal{L}V\left(\bm{x}(s)\right) ds\right] \\
&\leq&V\left(\bm{x}(\tau_{2i-2})\right)
+\text{E}\left[ \int_{\tau_{2i-2}}^{t\wedge\tau_{2i-1}} -k ds\right]\\
&=&V_{2}
-k\text{E}\left[t\wedge\tau_{2i-1}-{\tau_{2i-2}}\right]
\end{eqnarray*}
i.e., $$\text{E}\left[t\wedge\tau_{2i-1}-{\tau_{2i-2}}\right]\leq\frac{V_{2}-\text{E} \left[V\left(\bm{x}(t\wedge\tau_{2i-2})\right)\right]}{k}\leq\frac{V_{2}-V_{1}}{k}$$
{By monotone convergence theorem, the inequality
$\text{E}\left[\tau_{2i-1}-{\tau_{2i-2}}\right]\leq\frac{V_{2}-V_{1}}{k}$ is true.}

{(3) Based on the results of (1) and (2), we have that for any $j\in\mathbb{Z}_{\textgreater 0}$}
\begin{equation*}
\text{E}\left[\frac{\sum_{i=1}^{j} (\tau_{2i}-\tau_{2i-1})}{\sum_{i=1}^{j} (\tau_{2i+1}-\tau_{2i})}\right]
\geq \frac{k \left(V_{2}-V_{1}\right)}{2 C V_{2}}.
\end{equation*}
{Besides, Eq. \mbox{(\ref{eq infinite loops})} and the result (2) imply there're almost surely infinite many $\tau_{i}$, so the notations ``$\limsup$" and ``$\liminf$" in the following are not in vain.}
{Applying Fatou's lemma yields}
\begin{equation}{\label{eq lower bound}}
\text{E}\left[\limsup_{j\to\infty}\frac{\sum_{i=1}^{j} (\tau_{2i}-\tau_{2i-1})}{\sum_{i=1}^{j} (\tau_{2i+1}-\tau_{2i})}\right]
\geq \frac{k \left(V_{2}-V_{1}\right)}{2 C V_{2}}
\end{equation}

{Let $i(T)=\max\{i~| \tau_{2i} \leq T\}$, utilizing which we have}
\begin{eqnarray*}
\frac{1}{T}\int_{0}^{T}\mathbbold{1}_{\left\{V\left(\bm{x}(t)\right)\geq V_{2}\right\}} dt
&=& \frac
{\sum_{i=0}^{i(T)-1} \int^{\tau_{2i+2}}_{\tau_{2i}}\mathbbold{1}_{\left\{V\left(\bm{x}(t)\right)>V_{2}\right\}} dt
+\int^{T}_{\tau_{2i(T)}}\mathbbold{1}_{\left\{V\left(\bm{x}(t)\right)>V_{2}\right\}} dt}
{T}\\
&\leq& \frac{\tau_{1}-\tau_{0}}{T}\mathbbold{1}_{\{i(T)\geq 1\}} +
       \frac{\sum_{i=1}^{i(T)-1} (\tau_{2i+1}-\tau_{2i})}
        {\tau_{0}+\sum_{i=0}^{i(T)-1} (\tau_{2i+2}-\tau_{2i})+(T-\tau_{2i(T)})}\\
&&     +\frac{(\tau_{2i(T)+1}-\tau_{2i(T)})\mathbbold{1}_{\{T>\tau_{2i(T)+1}\}}
         +(T-\tau_{2i(T)})\mathbbold{1}_{\{T<\tau_{2i(T)+1}\}}}
         {\tau_{0}+\sum_{i=0}^{i(T)-1} (\tau_{2i+2}-\tau_{2i})+(T-\tau_{2i(T)})} \\
&\leq&  \frac{\tau_{1}-\tau_{0}}{T}\mathbbold{1}_{\{i(T)\geq 1\}}+
        \frac{\sum_{i=1}^{i(T)-1} (\tau_{2i+1}-\tau_{2i})+(\tau_{2i(T)+1}-\tau_{2i(T)})}
        {\sum_{i=0}^{i(T)-1} (\tau_{2i+2}-\tau_{2i})+(\tau_{2i(T)+1}-\tau_{2i(T)})}\\
&=&     \frac{\tau_{1}-\tau_{0}}{T}\mathbbold{1}_{\{i(T)\geq 1\}}+
        \frac{\sum_{i=1}^{i(T)} (\tau_{2i+1}-\tau_{2i})}
        {\sum_{i=0}^{i(T)} (\tau_{2i+1}-\tau_{2i})
         +\sum_{i=1}^{i(T)} (\tau_{2i}-\tau_{2i-1})}\\
&\leq&  \frac{\tau_{1}-\tau_{0}}{T}\mathbbold{1}_{\{i(T)\geq 1\}}+
        \frac{1}{1+\frac{\sum_{i=1}^{i(T)} (\tau_{2i}-\tau_{2i-1})}{\sum_{i=1}^{i(T)} (\tau_{2i+1}-\tau_{2i})}}
\end{eqnarray*}
Hence,
\begin{eqnarray*}
\liminf_{T\to\infty}
\frac{1}{T}\int_{0}^{T}\mathbbold{1}_{\left\{V\left(\bm{x}(t)\right)\geq V_{2}\right\}} dt
&\leq&
\liminf_{T\to\infty}
\left[
\frac{\tau_{1}-\tau_{0}}{T}\mathbbold{1}_{\{i(T)\geq 1\}}
+\frac{1}{1+\frac{\sum_{i=1}^{i(T)} (\tau_{2i}-\tau_{2i-1})}{\sum_{i=1}^{i(T)} (\tau_{2i+1}-\tau_{2i})}}
\right]\\
&=&\liminf_{T\to\infty}\frac{1}{1+\frac{\sum_{i=1}^{i(T)} (\tau_{2i}-\tau_{2i-1})}{\sum_{i=1}^{i(T)} (\tau_{2i+1}-\tau_{2i})}}
\end{eqnarray*}
{By taking the expectation of both sides and further combining Eq. \mbox{(\ref{eq lower bound})}, we will get the result of (3).}
\end{proof}

\begin{thm}
{For a stochastic equation in the form of \mbox{(\ref{StochasticEquation})},
if there exists a nonnegative function $V(\bm{x})\in
\mathscr{C}^{2}(\mathbb{R}^{n};\mathbb{R}_{\geq 0})$ satisfying the
following conditions:}
\begin{itemize}
  \item {$\lim_{\|\bm{x}\|_{2}\to\infty} V(\bm{x})=\infty$;}
  \item {$\exists~\bm{x}_{\tilde{R}}\in\mathbb{R}^n,~ \tilde{R}\textgreater 0~and~k\textgreater 0,~if~\|\bm{x}-\bm{x}_{\tilde{R}}\|_{2}\geq \tilde{R},~then~\mathcal{L}
  [V(\bm{x})]<-k$;}
  \item {$\exists~\epsilon\textgreater 0$, if $\|\bm{x}-\bm{x}_{\tilde{R}}\|_2< \tilde{R}+\epsilon$, then $rank(\bm{h}(\bm{x})\bm{h}^{\top}(\bm{x}))=n$;}
  %, i.e., the smallest eigenvalue of the is $\boldsymbol{\sigma}(\bm{x})\boldsymbol{\sigma}^{\top}(\bm{x})$ is bounded away from zero in this domain.
\end{itemize}
{then for any Borel subset $\mathbb{B}$ satisfying $V_{\mathbb{B}}>V_{0}$, we have}
\begin{equation}{\label{eq estimation of pi}}
\pi\left(\mathbb{B}\right) \geq
\frac{k\left(V_{\mathbb{B}}-V_{0}\right)}{2 C V_{\mathbb{B}}+k\left(V_{\mathbb{B}}-V_{0}\right)}.
\end{equation}
{where $C=\sup_{\bm{x}\in\mathbb{R}^{n}}\mathcal{L}[V(\bm{x})]$,
$V_{\mathbb{B}}=\inf_{\bm{x}\notin\mathbb{B}} V(x)$
and $V_{0}=\sup_{\|\bm{x}-\bm{x}_{\tilde{R}}\|_{2}\leq \tilde{R}} V(x)$.}
\end{thm}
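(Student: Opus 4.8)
The plan is to obtain the bound by pairing the global ergodicity already supplied by \textit{Theorem} $5$ with the occupation-time estimate of \textit{Lemma} $3$, read off against the sublevel sets of $V$. Since the three hypotheses imposed here are verbatim those of \textit{Theorem} $5$, I would take for granted that the equation has a unique global solution and a unique invariant measure $\pi$ for which the global ergodic identity holds: for every Borel set $\mathbb{A}$, $\lim_{T\to\infty}\frac1T\int_0^T\mathbbold{1}_{\{\bm{x}(t)\in\mathbb{A}\}}\dd t=\pi(\mathbb{A})$ almost surely. I would also record at the outset that $C=\sup_{\bm{x}\in\mathbb{R}^n}\mathcal{L}[V(\bm{x})]$ is finite and positive: $\mathcal{L}[V]$ is continuous, it is bounded on the compact ball $\{\|\bm{x}-\bm{x}_{\tilde{R}}\|_2\le\tilde{R}\}$, and it is strictly below $-k$ outside that ball, so the supremum is attained on the ball.

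First I would reduce the event to a sublevel event of $V$. By definition $V_{\mathbb{B}}=\inf_{\bm{x}\notin\mathbb{B}}V(\bm{x})$, so every $\bm{x}\notin\mathbb{B}$ obeys $V(\bm{x})\ge V_{\mathbb{B}}$; equivalently $\mathbb{B}^{c}\subseteq\{\bm{x}\mid V(\bm{x})\ge V_{\mathbb{B}}\}$, whence by monotonicity $\pi(\mathbb{B}^{c})\le\pi(\{V\ge V_{\mathbb{B}}\})$. It therefore suffices to establish the upper bound $\pi(\{V\ge V_{\mathbb{B}}\})\le \frac{2CV_{\mathbb{B}}}{2CV_{\mathbb{B}}+k(V_{\mathbb{B}}-V_0)}$, since then $\pi(\mathbb{B})=1-\pi(\mathbb{B}^{c})\ge\frac{k(V_{\mathbb{B}}-V_0)}{2CV_{\mathbb{B}}+k(V_{\mathbb{B}}-V_0)}$ by elementary algebra.

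Next I would invoke \textit{Lemma} $3$ with $V_1=V_0$ and $V_2=V_{\mathbb{B}}$, which is legitimate because $V_{\mathbb{B}}>V_0$ by hypothesis. Its drift requirements hold: $\mathcal{L}[V]\le C$ everywhere by definition of $C$, and $\mathcal{L}[V]<-k$ throughout $\{V>V_0\}$, because $V(\bm{x})>V_0=\sup_{\|\bm{x}-\bm{x}_{\tilde{R}}\|_2\le\tilde{R}}V$ forces $\|\bm{x}-\bm{x}_{\tilde{R}}\|_2>\tilde{R}$ so that the second hypothesis applies; in the excursions driving \textit{Lemma} $3$ the state stays strictly above the level $V_1$, so only this bound on $\{V>V_0\}$ is actually used (to match the closed-set wording exactly one may instead run the lemma with $V_1=V_0+\eta$ and let $\eta\downarrow0$, leaving the final estimate unchanged by continuity). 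The remaining hypothesis $\mathrm{P}(\tau_{2i}-\tau_{2i-1}=\infty)=0$ follows from the recurrence structure behind \textit{Theorem} $5$: by \textit{Theorem} $4$ the invariant measure is equivalent to every transition measure, hence has full support and $\pi(\{V>V_{\mathbb{B}}\})>0$; the ergodic identity then forces the process to spend a positive time fraction in $\{V\ge V_{\mathbb{B}}\}$, so it visits that set infinitely often and every $\tau_{2i}$ is almost surely finite.

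Finally, conclusion (3) of \textit{Lemma} $3$ gives $\mathrm{E}\big[\liminf_{T\to\infty}\frac1T\int_0^T\mathbbold{1}_{\{V(\bm{x}(t))\ge V_{\mathbb{B}}\}}\dd t\big]\le \frac{2CV_{\mathbb{B}}}{2CV_{\mathbb{B}}+k(V_{\mathbb{B}}-V_0)}$. Applying the global ergodicity of \textit{Theorem} $5$ to the Borel set $\{V\ge V_{\mathbb{B}}\}$ upgrades the $\liminf$ to a genuine limit equal to the deterministic constant $\pi(\{V\ge V_{\mathbb{B}}\})$ almost surely, so the left-hand expectation is exactly $\pi(\{V\ge V_{\mathbb{B}}\})$; the sublevel bound of the previous paragraph then follows, and combining it with $\pi(\mathbb{B}^{c})\le\pi(\{V\ge V_{\mathbb{B}}\})$ closes the argument. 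The main obstacle, and the only place demanding care, is verifying the two technical hypotheses of \textit{Lemma} $3$: the behaviour at the boundary level $\{V=V_0\}$ (dispatched either by the strict-excursion remark or by the $\eta\downarrow0$ limit) and the almost-sure finiteness of the recurrence times $\tau_{2i}$ (dispatched through the full support of $\pi$). Once these are secured, the rest is bookkeeping.
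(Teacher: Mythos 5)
Your proposal is correct and follows essentially the same route as the paper: apply \textit{Lemma} $3$ with $V_1=V_0$ and $V_2=V_{\mathbb{B}}$, identify the expected occupation fraction of $\{V\ge V_{\mathbb{B}}\}$ with $\pi(\{V\ge V_{\mathbb{B}}\})$ via the ergodicity supplied by \textit{Theorem} $5$, and conclude from $\mathbb{B}^{c}\subseteq\{V\ge V_{\mathbb{B}}\}$. Your extra care at the boundary level $\{V=V_0\}$ and in justifying the a.s.\ finiteness of the return times is a minor tightening of the same argument, not a different approach.
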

\begin{proof}
{If $\pi(\mathbb{B})=1$, then the result holds true automatically. So we only prove the result in the case of $\pi(\mathbb{B})<1$ which implies $\pi\left(\{V(\bm{x})>V_{\mathbb{B}}\}\right)>0$.}

{From $\lim_{\|\bm{x}\|_{2}\to\infty} V(\bm{x})=\infty$ and $V_{0}=\sup_{\|\bm{x}-\bm{x}_{\tilde{R}}\|_{2}\leq \tilde{R}} V(x)$, we obtain} $$\{\bm{x}|V(\bm{x})\geq V_0\}\subseteq \{\bm{x}|~\|\bm{x}-\bm{x}_{\tilde{R}}\|_{2}\textgreater \tilde{R}\} $$
{Therefore, when $V(\bm{x})\geq V_0$, $\mathcal{L}[V(\bm{x})]<-k$.
    Let $V_{1}$ and $V_{2}$ mentioned in \textit{Lemma 3} be chosen as $V_{0}$ and $V_{\mathbb{B}}$.
    Then, by ergodicity in \textit{Theorem} \mbox{\ref{WeakStability}}, Eq. (\mbox{\ref{eq infinite loops}}) is achieved.
    }

{According to \textit{Theorem} \mbox{\ref{WeakStability}} and Eq. \mbox{(\ref{Eq global ergodicity})} we have}
\begin{eqnarray*}
\text{E}\left[\liminf_{T\to\infty}
\frac{1}{T}\int_{0}^{T}\mathbbold{1}_{\left\{V\left(\bm{x}(t)\right)\geq V_{\mathbbold{B}}\right\}} dt\right]
&=&\liminf_{T\to\infty}
\frac{1}{T}\int_{0}^{T}\mathbbold{1}_{\left\{V\left(\bm{x}(t)\right)\geq V_{\mathbbold{B}}\right\}} dt\\
&=&\pi\left(V\left(\bm{x}\right)\geq V_{\mathbbold{B}}\right)
\end{eqnarray*}
{By applying the result (3) of \textit{Lemma 3} we can get}
\begin{equation*}
\pi\left(V\left(\bm{x}\right)\geq V_{\mathbbold{B}}\right)
\leq
\frac{2 C V_{\mathbb{B}}}{2 C V_{\mathbb{B}}+k\left(V_{\mathbb{B}}-V_{0}\right)}
\end{equation*}
{Note that $\{\bm{x}|V(\bm{x})<V_\mathbbold{B}\}\subseteq \mathbbold{B}$, we thus have}
\begin{equation*}
\pi(\mathbbold{B})
\geq \pi\left(V(\bm{x})<V_\mathbbold{B}\right)
= 1 - \pi\left(V\left(\bm{x}\right)\geq V_{\mathbbold{B}}\right)
\geq
\frac{k\left(V_{\mathbb{B}}-V_{0}\right)}{2 C V_{\mathbb{B}}+k\left(V_{\mathbb{B}}-V_{0}\right)}
\end{equation*}
\end{proof}

\begin{rem}{\label{rem shape the invariant measure}}
{The above theorem reveals that the invariant measure can be shaped by controlling the change rates $k$ and $C$ of $V(\bm{x})$ together with the ball radius $\tilde{R}$ that takes effect through affecting $V_0$ in terms of $V_{0}=\sup_{\|\bm{x}-\bm{x}_{\tilde{R}}\|_{2}\leq \tilde{R}} V(x)$. When $\tilde{R}$ is fixed, the invariant measure will become larger if $k$ increases and$\backslash$or $C$ decreases. The larger the invariant measure is, the greater possibility the trajectory of the state fastens on the region near the desired state, i.e., the more stable the stochastic system is in the weak sense.}
\end{rem}

\subsection{Stochastic weak passivity theorems}
{Now, we are able to tackle the problem of
stabilizing the stochastic systems in weak sense based on the
stochastic weak passivity. We name the main result as stochastic
weak passivity theorems in the context. Here, concern is only given
to the implicit negative proportional controller
$\bm{u}=-\bm{K}\bm{y}(\bm{x},\bm{u})$ for the purpose of
stabilization, where $\bm{K}$ is a positive definite matrix with
suitable dimension. }

\begin{thm}[Stochastic Weak Passivity Theorem $1$]{\label{StochasticWeakPassivityTheorem}}
For a stochastic differential system $\Sigma_S$ in the form of Eq.
(\ref{StochasticSystem}), assume that there exists a {radially unbounded} storage
function
$V(\bm{x})\in\mathscr{C}^{2}(\mathbb{R}^{n};\mathbb{R}_{\geq 0})$
suggesting it to be stochastically weakly passive with the
stochastic passive radius $R$ and the desired state $\bm{x}^\dag$.
Also, we suppose that there exists a negative proportional
controller $\bm{u}(\bm{x})=-\bm{K}\bm{y}(\bm{x},\bm{u})$ connected
with $\Sigma_S$ in feedback so that
\begin{itemize}
  \item $\|\bm{y}\big(\bm{x},\bm{u}(\bm{x})\big)\|_{2}$ be bounded away from zero when
  $\|\bm{x}-\bm{x}^\dag\|_{2}\textgreater R$;
  \item $\exists~\epsilon\textgreater 0$, rank($\bm{h}\big(\bm{x},\bm{u}(\bm{x})\big)\bm{h}^{\top}\big(\bm{x},\bm{u}(\bm{x})\big)=n$ when $\|\bm{x}-\bm{x}^\dag\|_2 < R+\epsilon$.
\end{itemize}
Then there exists an unique finite invariant measure $\pi$, and
moreover, for any Borel subset $\mathbb{A}$ with zero $\pi$-measure
boundary
$$\lim_{t\to\infty}\mathcal{P}(t,\bm{x}(0),\mathbb{A})=\pi(\mathbb{A}),~~~~\forall~
\bm{x}(0)\in\mathbb{R}^{n}$$
{and for any Borel subset $\mathbbold{B}$}
\begin{equation*}
\lim_{T\to\infty} \frac{1}{T}\int_{0}^{T} \mathbbold{1}_{\left\{\bm{x}(t)\in \mathbb{B}\right\}} dt =\pi(\mathbb{B}),~~~\mathrm{a.s.}~~\forall~
\bm{x}(0)\in\mathbb{R}^{n}
\end{equation*}
That is to say $\Sigma_S$ being
globally asymptotical stable in weak sense.
\end{thm}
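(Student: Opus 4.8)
The plan is to reduce the claim to \textit{Theorem} \ref{WeakStability} by checking that the closed-loop system inherits its three hypotheses. Substituting the feedback $\bm{u}(\bm{x})=-\bm{K}\bm{y}(\bm{x},\bm{u})$ into $\Sigma_S$ produces an autonomous It\^o equation of the form (\ref{StochasticEquation}) with diffusion term $\bm{h}(\bm{x},\bm{u}(\bm{x}))$; I would identify the center and radius of the exceptional ball in \textit{Definition} \ref{DefStochasticWeakPassivity} with those in \textit{Theorem} \ref{WeakStability}, taking $\bm{x}_{\tilde{R}}=\bm{x}^\dag$ (the sole minimizer of $V$, i.e.\ the stochastic passive state) and $\tilde{R}=R$.

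The first hypothesis of \textit{Theorem} \ref{WeakStability} holds by assumption, as $V$ is radially unbounded. For the second, I would exploit the controller structure: for $\|\bm{x}-\bm{x}^\dag\|_2\geq R$ stochastic weak passivity gives $\mathcal{L}[V(\bm{x})]\leq\bm{u}^\top\bm{y}$, and inserting $\bm{u}=-\bm{K}\bm{y}$ yields
\begin{equation*}
\mathcal{L}[V(\bm{x})]\leq-\bm{y}^\top\bm{K}\bm{y}\leq-\lambda_{\min}(\bm{K})\,\|\bm{y}\|_2^2,
\end{equation*}
with $\lambda_{\min}(\bm{K})>0$ because $\bm{K}$ is positive definite. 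The first controller assumption supplies an $\eta>0$ with $\|\bm{y}(\bm{x},\bm{u}(\bm{x}))\|_2\geq\eta$ on $\{\|\bm{x}-\bm{x}^\dag\|_2>R\}$, so setting $k:=\lambda_{\min}(\bm{K})\,\eta^2>0$ delivers $\mathcal{L}[V(\bm{x})]<-k$ there, matching the second hypothesis. The third hypothesis is precisely the second controller assumption, namely the full-rank condition on $\bm{h}(\bm{x},\bm{u}(\bm{x}))\bm{h}^\top(\bm{x},\bm{u}(\bm{x}))$ for $\|\bm{x}-\bm{x}^\dag\|_2<R+\epsilon$. Once all three are in place, \textit{Theorem} \ref{WeakStability} (through \textit{Lemmas} $1$ and $2$) yields the global solution, the unique finite invariant measure, the convergence of the transition measure, and the ergodicity in one stroke.

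The step I expect to require the most care is reconciling the regions: stochastic weak passivity is stated on the closed set $\{\|\bm{x}-\bm{x}^\dag\|_2\geq R\}$, whereas the controller bound $\|\bm{y}\|_2\geq\eta$ is assumed only on the open set $\{\|\bm{x}-\bm{x}^\dag\|_2>R\}$. I would close this small gap by invoking continuity of $\bm{y}(\bm{x},\bm{u}(\bm{x}))$ to extend the bound to the boundary sphere, or, more cleanly, by enlarging $\tilde{R}$ to any value in $(R,R+\epsilon)$: on $\{\|\bm{x}-\bm{x}^\dag\|_2\geq\tilde{R}\}$ both the passivity inequality and the lower bound on $\|\bm{y}\|_2$ hold, while the rank condition persists on $\{\|\bm{x}-\bm{x}^\dag\|_2<\tilde{R}+(R+\epsilon-\tilde{R})\}$, so the hypotheses of \textit{Theorem} \ref{WeakStability} are met with this adjusted radius. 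A secondary point to confirm, which the statement tacitly presupposes, is that the implicit relation $\bm{u}=-\bm{K}\bm{y}(\bm{x},\bm{u})$ determines a single-valued, locally Lipschitz feedback $\bm{u}(\bm{x})$, so that the closed loop is genuinely of the admissible form (\ref{StochasticEquation}) to which \textit{Theorem} \ref{WeakStability} applies.
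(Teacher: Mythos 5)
Your proposal is correct and follows essentially the same route as the paper: substitute the negative proportional feedback into the weak passivity inequality to obtain $\mathcal{L}[V(\bm{x})]\leq-\lambda_{\min}(\bm{K})\|\bm{y}\|_2^2<-k$ outside the ball, then invoke the rank condition and conclude via \textit{Theorem} \ref{WeakStability}. Your two supplementary observations (reconciling the open/closed regions and the well-posedness of the implicit feedback) are points the paper passes over silently, but they do not alter the argument.
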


\begin{proof}
From the given conditions {and the definition of $\mathcal{L}[\cdot]$ in Eq. \mbox{(\ref{InfinitesimalGenerator})}}, for any state outside the ball
$\|\bm{x}-\bm{x}^\dag\|_{2}\geq R$ we have
\begin{equation*}
\mathcal{L}[V(\bm{x})]\leq \bm{y}^{\top}\bm{u} =
-\bm{y}^{\top}(\bm{x},\bm{u})\bm{K}\bm{y}(\bm{x},\bm{u})\leq-\lambda_{\min}\|\bm{y}\big(\bm{x},\bm{u}(\bm{x})\big)\|_{2}^{2}
\end{equation*}
where $\lambda_{\min}\textgreater 0$ is the minimum eigenvalue of
$\bm{K}$. Since $\|\bm{y}\big(\bm{x},\bm{u}(\bm{x})\big)\|_{2}$ is
bounded away from zero when $\|\bm{x}-\bm{x}^\dag\|_{2}\textgreater
R$, we could find a positive number $k$ such that for any
$\|\bm{x}-\bm{x}^\dag\|_{2}\textgreater R$
\begin{equation*}
\mathcal{L} [V(\bm{x})]\textless -k
\end{equation*}
Combining it with the non-singularity of
$\bm{h}\big(\bm{x},\bm{u}(\bm{x})\big)\bm{h}^{\top}\big(\bm{x},\bm{u}(\bm{x})\big)$
when $\|\bm{x}-\bm{x}^\dag\|_2 < R+\epsilon$ yields the results
(based on \textit{Theorem} $5$).
\end{proof}

Note that the above theorem works when the output norm is bounded
way from zero outside a ball. In fact, this condition is not
necessary. It may be replaced by setting the system to be strictly
state stochastic weak passive.

\begin{thm}[Stochastic Weak Passivity Theorem $2$]{\label{StochasticWeakPassivityTheorem2}}
Consider a strictly state stochastic weak passive system under the
conditions of radially unbounded $V(\bm{x})$, $R$, $\bm{x}^\dag$ and $\delta$. Suppose
that there exists a negative proportional controller
$\bm{u}(\bm{x})=-\bm{K}\bm{y}(\bm{x},\bm{u})$ connecting with the
system in feedback so that
\begin{itemize}
  \item $\exists~\epsilon\textgreater 0$, rank($\bm{h}\big(\bm{x},\bm{u}(\bm{x})\big)\bm{h}^{\top}\big(\bm{x},\bm{u}(\bm{x})\big)=n$ when $\|\bm{x}-\bm{x}^\dag\|_2 < R+\epsilon$.
\end{itemize}
Then there exists an unique finite invariant measure $\pi$, and
moreover, for any Borel subset $\mathbb{A}$ with zero $\pi$-measure
boundary
$$\lim_{t\to\infty}\mathcal{P}(t,\bm{x}(0),\mathbb{A})=\pi(\mathbb{A}),~~~~\forall~
\bm{x}(0)\in\mathbb{R}^{n}$$
{and for any Borel subset $\mathbbold{B}$}
\begin{equation*}
\lim_{T\to\infty} \frac{1}{T}\int_{0}^{T} \mathbbold{1}_{\left\{\bm{x}(t)\in \mathbb{B}\right\}} dt =\pi(\mathbb{B}), ~~~\mathrm{a.s.}~~\forall~
\bm{x}(0)\in\mathbb{R}^{n}
\end{equation*}
That is to say $\Sigma_S$ being
globally asymptotical stable in weak sense.
\end{thm}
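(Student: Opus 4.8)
The plan is to reduce the claim to \textit{Theorem}~\ref{WeakStability} by verifying its three hypotheses for the closed-loop equation obtained after substituting $\bm{u}(\bm{x})=-\bm{K}\bm{y}(\bm{x},\bm{u})$ into $\Sigma_S$. Two of those hypotheses come for free: the radial unboundedness $\lim_{\|\bm{x}\|_2\to\infty}V(\bm{x})=\infty$ is assumed outright, and the rank condition on $\bm{h}\bm{h}^\top$ inside the ball $\|\bm{x}-\bm{x}^\dag\|_2<R+\epsilon$ is exactly the single standing assumption of the theorem (taking $\bm{x}_{\tilde R}=\bm{x}^\dag$ and $\tilde R=R$). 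Hence the entire content lies in producing a uniform strictly-negative drift bound $\mathcal{L}[V(\bm{x})]<-k$ outside the ball $\|\bm{x}-\bm{x}^\dag\|_2\geq R$.

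For that step I would start from the defining inequality of strict state stochastic weak passivity (\textit{Definition}~\ref{DefStrictStochasticWeakPassivity} with $\boldsymbol{\xi}=\bm{x}-\bm{x}^\dag$),
\begin{equation*}
\mathcal{L}[V(\bm{x})]\leq \bm{u}^\top\bm{y}-\delta\|\bm{x}-\bm{x}^\dag\|_2^2,\qquad \|\bm{x}-\bm{x}^\dag\|_2\geq R,
\end{equation*}
and then insert the negative proportional feedback. Since $\bm{K}$ is positive definite, the supply rate is nonpositive, $\bm{u}^\top\bm{y}=-\bm{y}^\top\bm{K}\bm{y}\leq 0$, so it can simply be discarded, leaving
\begin{equation*}
\mathcal{L}[V(\bm{x})]\leq -\delta\|\bm{x}-\bm{x}^\dag\|_2^2\leq -\delta R^2,\qquad \|\bm{x}-\bm{x}^\dag\|_2\geq R.
\end{equation*}
Choosing any $k\in(0,\delta R^2)$ then yields $\mathcal{L}[V(\bm{x})]<-k$ throughout $\|\bm{x}-\bm{x}^\dag\|_2\geq R$, which is precisely the middle hypothesis of \textit{Theorem}~\ref{WeakStability}. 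Invoking that theorem with the data $(V,\bm{x}^\dag,R,k,\epsilon)$ then delivers the unique finite invariant measure together with the weak convergence of the transition measure and the ergodic time-average identity, i.e.\ global asymptotic weak stability.

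The conceptual point I would emphasise --- and the only place where this argument genuinely differs from \textit{Theorem}~\ref{StochasticWeakPassivityTheorem} --- is that the role previously played by the assumption ``$\|\bm{y}\|_2$ bounded away from zero'' is now discharged automatically by the state-dissipation term $\delta\|\bm{x}-\bm{x}^\dag\|_2^2$: on the complement of the ball this term is bounded below by $\delta R^2>0$, and that is exactly what manufactures the constant $k$. There is therefore no real analytic obstacle in the estimate itself; the only care needed is the bookkeeping in the degenerate case $R=0$ (where the drift is strictly negative everywhere except at $\bm{x}^\dag$ itself, so one must fix an arbitrary $\tilde R\in(0,\epsilon)$ and rerun the bound with $R$ replaced by $\tilde R$), and the implicit verification that $\bm{u}=-\bm{K}\bm{y}(\bm{x},\bm{u})$ determines a well-defined feedback so that the closed loop is again an autonomous equation of the form~(\ref{StochasticEquation}) to which \textit{Theorem}~\ref{WeakStability} may be applied.
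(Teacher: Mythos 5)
Your proposal is correct and follows essentially the same route as the paper's own proof: substitute the negative proportional feedback into the strict state weak passivity inequality, discard the nonpositive supply rate $-\bm{y}^\top\bm{K}\bm{y}$, obtain $\mathcal{L}[V(\bm{x})]\leq-\delta R^2$ outside the ball, and invoke \textit{Theorem}~\ref{WeakStability}. Your extra remarks on the degenerate case $R=0$ and on the well-posedness of the implicit feedback are refinements the paper silently omits, but they do not change the argument.
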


\begin{proof}
From the known conditions {and the definition of $\mathcal{L}[\cdot]$ in Eq. \mbox{(\ref{InfinitesimalGenerator})}}, we have at
$\|\bm{x}-\bm{x}^\dag\|_{2}\geq R$
\begin{equation*}
\mathcal{L}[V(\bm{x})]\leq
\bm{y}^{\top}\bm{u}-\delta\|\bm{x}-\bm{x}^{\dag}\|_2^{2} =
-\bm{y}^{\top}(\bm{x},\bm{u})\bm{K}\bm{y}(\bm{x},\bm{u})-\delta\|\bm{x}-\bm{x}^{\dag}\|_2^{2}
\leq-\delta R^{2}
\end{equation*}
i.e., $\exists~ k\textgreater 0,~\mathcal{L} [V(\bm{x})]\textless
-k$. Further applications of \textit{Theorem} $5$ yield the results
immediately.
\end{proof}

\begin{rem}
{Clearly, the change rate $k$ of the energy function $V(\bm{x})$ is closely dependent on the the feedback gain matrix $\bm{K}$. When $\bm{K}$ is designed to be stronger, $k$ will be larger. This can lead to the increase of the invariant measure $\pi\left(\mathbb{B}\right)$, and further the state evolution taking place within a more intensive region around the desired state $\bm{x}^\dag$. Therefore, it is an effective way for shaping the invariant measure $\pi\left(\mathbb{B}\right)$ purposefully to strengthen the feedback gain matrix $\bm{K}$.}
\end{rem}

\begin{rem}
Stochastic weak passivity theorems suggest
sufficient conditions to stabilize a stochastic differential system
in weak sense. Although some items are difficult to realize, such as
the non-singularity of
$\bm{h}\big(\bm{x},\bm{u}(\bm{x})\big)\bm{h}^{\top}\big(\bm{x},\bm{u}(\bm{x})\big)$
when $\|\bm{x}-\bm{x}^\dag\|_2 < R+\epsilon$, some ones are
relatively weak, e.g., the energy function $V(\bm{x})$ only requires
to be positive semi-definite, and the simple proportional controller
is qualified.
\end{rem}

{To weaken the harsh condition on the non-singularity
of
$\bm{h}\big(\bm{x},\bm{u}(\bm{x})\big)\bm{h}^{\top}\big(\bm{x},\bm{u}(\bm{x})\big)$
in the Stochastic weak passivity theorem, we might separate those
linear independent rows from $\bm{h}\big(\bm{x},\bm{u}(\bm{x})\big)$
to construct a new diffusion term
$\bm{h}_1(\cdot,\cdot)\in\mathbb{R}^{n_1\times r}$. Clearly, it is
much easier to realize
$\bm{h}_1(\cdot,\cdot)\bm{h}_1^\top(\cdot,\cdot)$ to be full-rank
than
$\bm{h}\big(\bm{x},\bm{u}(\bm{x})\big)\bm{h}^{\top}\big(\bm{x},\bm{u}(\bm{x})\big)$
to be nonsingular. For this purpose, we first define a
transformation.}
\begin{definition}[Decomposition Transformation]
A homeomorphism
$\boldsymbol{\Phi}(\bm{x})\in\mathscr{C}^{2}(\mathbb{R}^{n};\mathbb{R}^n)$,
expressed as
\begin{equation}\label{DecompositionTransformation}
\boldsymbol{\Phi}(\bm{x})=\left(
\begin{array}{c}
\bar{\bm{x}}_{1}\\
\bar{\bm{x}}_{2}
\end{array}
\right)
\end{equation}
is called a decomposition transformation of system $\Sigma_{S}$ if it can transform the
stochastic differential system $\Sigma_{S}$ equipped with Eq.
(\ref{StochasticSystem}) into two subsystems: one is a stochastic
differential system $\Sigma_{subS}$, the other is a deterministic
system $\Sigma_{subD}$. Here,
$\bar{\bm{x}}_{1}\in\mathbb{R}^{n_1},~\bar{\bm{x}}_{2}\in\mathbb{R}^{n_2}$
and $n_1+n_2=n$. These two subsystems are written respectively as
\begin{eqnarray}
 \Sigma_{subS}:&~&\left\{
\begin{array}{lll}\label{subSystem1}
\dd \bar{\bm{x}}_{1}&=&\bm{f}_{1}(\bar{\bm{x}}_{1},\bm{u}) \dd t+ \bm{h}_{1}(\bar{\bm{x}}_{1},\bm{u}) \dd \boldsymbol{\omega} \\
\bm{y}_1&=&\bm{s}_{1}(\bar{\bm{x}}_{1},\bm{u})
\end{array}
\right.\\
\Sigma_{subD}:&~&\left\{
\begin{array}{lll}\label{subSystem2}
\dd\bar{\bm{x}}_{2}&=& \mathbbold{0}_{n_2}\dd t \\
\bm{y}_{2}&=&\bm{s}_{2}(\bar{\bm{x}}_{2},\bm{u})
\end{array}
\right..
\end{eqnarray}
where the drift term
$\bm{f}_{1}(\bar{\bm{x}}_{1},\bm{u})\in\mathbb{R}^{n_1}$ and
diffusion term
$\bm{h}_{1}(\bar{\bm{x}}_{1},\bm{u})\in\mathbb{R}^{n_1\times r}$ in
$\Sigma_{subS}$ are both locally Lipschitz continuous.
\end{definition}

\begin{rem}
{The diffusion term
$\bm{h}_{1}(\bar{\bm{x}}_{1},\bm{u})$ in $\Sigma_{subS}$ can be
extracted from $\bm{h}\big(\bm{x},\bm{u}(\bm{x})\big)$ to the
greatest extent according to the rank so that the rank of
$\bm{h}_{1}(\bar{\bm{x}}_{1},\bm{u})$ may reach $n_1$.
$\Sigma_{subD}$ is obviously a fixed point, and is certainly stable.
Thus, the stabilization of $\Sigma_{S}$ may be realized by
stabilizing $\Sigma_{subS}$.}
\end{rem}

\begin{rem}{\label{rem explaination of how to find decompation transformation}}
{In practice, the defined decomposition transformation \mbox{(\ref{DecompositionTransformation})} is not difficult to be constructed. For many nonlinear stochastic systems, the state evolution really takes place in an invariant manifold, denoted by $\mathcal{M}$, but not in the whole state space $\mathbb{R}^n$. For example, the state of a chemical reaction network will evolve in a positive stoichiometric compatibility class, which is a subset of $\mathbb{R}^n$. Therefore, an immediate idea to construct the decomposition transformation is to decompose the state space $\mathbb{R}^n$ into the invariant manifold $\mathcal{M}$ and its orthogonal complement $\mathcal{M}^\perp$. The projection from $\bm{x}\in\mathbb{R}^n$ to $\bar{\bm{x}}_1\in\mathcal{M}$ leads to the stochastic subsystem $\Sigma_{subS}$ while the projection from $\bm{x}\in\mathbb{R}^n$ to $\bar{\bm{x}}_2\in\mathcal{M}^\perp$ induces the fixed point subsystem $\Sigma_{subD}$. }
\end{rem}

Utilizing the decomposition transformation, we can give the refined
stochastic weak passivity theorem.
\begin{lemma}\label{TransformeStateManifold}
For a stochastic differential system (\ref{StochasticSystem}) if
there exists a decomposition transformation
$\boldsymbol{\Phi}(\bm{x})$ transform the system into two subsystems
$\Sigma_{subS}$ (\ref{subSystem1}) and $\Sigma_{subD}$
(\ref{subSystem2}), then the state $\bm{x}(t)$ staring from any
initial state $\bm{x}(0)$ satisfies
\begin{equation*}
\bm{x}(t) \in
\boldsymbol{\Phi}^{-1}\big(\mathbb{R}^{n_1}\otimes\{\bar{x}_{2}(0)\}\big),
~~~\forall~ t\in[0,\infty)
\end{equation*}
\end{lemma}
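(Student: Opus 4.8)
The plan is to exploit directly the defining feature of the decomposition transformation in \textit{Definition} $4.8$, namely that the $\bar{\bm{x}}_2$-component is governed by the trivial deterministic dynamics of $\Sigma_{subD}$ in Eq. (\ref{subSystem2}). First I would fix notation by writing the transformed state as $\boldsymbol{\Phi}\big(\bm{x}(t)\big)=(\bar{\bm{x}}_1(t)^\top,\bar{\bm{x}}_2(t)^\top)^\top$. Because $\boldsymbol{\Phi}$ is a $\mathscr{C}^2$ homeomorphism, the two blocks $\bar{\bm{x}}_1(t)$ and $\bar{\bm{x}}_2(t)$ are well-defined processes, and the inverse map $\boldsymbol{\Phi}^{-1}$ is available to recover $\bm{x}(t)$ from them; this is the only structural fact about $\boldsymbol{\Phi}$ I need.

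Next I would integrate the second subsystem. Since $\bar{\bm{x}}_2$ obeys $\dd\bar{\bm{x}}_2=\mathbbold{0}_{n_2}\dd t$, it carries neither a drift nor a diffusion contribution, so integrating from $0$ to $t$ gives
\begin{equation*}
\bar{\bm{x}}_2(t)=\bar{\bm{x}}_2(0)+\int_0^t\mathbbold{0}_{n_2}\,\dd s=\bar{\bm{x}}_2(0),\qquad\forall~t\in[0,\infty).
\end{equation*}
Hence the second component stays frozen at its initial value along the whole trajectory. Consequently $\boldsymbol{\Phi}\big(\bm{x}(t)\big)=(\bar{\bm{x}}_1(t)^\top,\bar{\bm{x}}_2(0)^\top)^\top$ with $\bar{\bm{x}}_1(t)\in\mathbb{R}^{n_1}$, which is precisely the statement that $\boldsymbol{\Phi}\big(\bm{x}(t)\big)\in\mathbb{R}^{n_1}\otimes\{\bar{x}_{2}(0)\}$ for every $t$. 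Applying the inverse homeomorphism to both sides then yields $\bm{x}(t)\in\boldsymbol{\Phi}^{-1}\big(\mathbb{R}^{n_1}\otimes\{\bar{x}_{2}(0)\}\big)$, which is the claim.

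I do not expect a genuine obstacle here, as the result is essentially a restatement of the fact that $\Sigma_{subD}$ has identically vanishing dynamics; the argument is self-contained and requires no appeal to the earlier recurrence or irreducibility machinery. The only point deserving care is the pathwise nature of the conclusion: because the $\bar{\bm{x}}_2$ equation contains no Wiener increment $\dd\boldsymbol{\omega}$, the process $\bar{\bm{x}}_2$ is in fact deterministic once $\bar{\bm{x}}_2(0)$ is fixed, so the inclusion holds \emph{surely} rather than merely almost surely, and no localization or stopping-time truncation is needed to handle the stochastic part.
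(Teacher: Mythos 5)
Your proposal is correct and follows essentially the same route as the paper's own proof: the paper likewise observes that $\bar{\bm{x}}_2(t)=\bar{\bm{x}}_2(0)$ because $\Sigma_{subD}$ has vanishing dynamics, and then pulls back through $\boldsymbol{\Phi}^{-1}$. Your version merely spells out the integration step and the pathwise (sure, not just almost sure) nature of the conclusion, which the paper leaves implicit.
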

\begin{proof}
Since
$\boldsymbol{\Phi}(\bm{x}(0))=\bar{\bm{x}}(0)=\bar{\bm{x}}_{1}(0)\otimes\bar{\bm{x}}_{2}(0)$,
also since $\bar{\bm{x}}_2(t)=\bar{\bm{x}}_2(0)$, the result is
true.
\end{proof}

\begin{rem}
It is clear that $\bar{\bm{x}}_2(0)$ can be any element in
$\mathbb{R}^{n_2}$. This in turn means that for any
$\bm{z}\in\mathbb{R}^{n_2}$ there is a manifold
$\boldsymbol{\Phi}^{-1}\big(\mathbb{R}^{n_1}\otimes\{\bm{z}\}\big)$
defined. For simplicity of notation, we identify this manifold by
$\mathbb{R}^{n}_{\bm{z}}$.
\end{rem}

\begin{thm}[Refined Stochastic Weak Passivity Theorem]{\label{RefinedSWPT}}
Assume that the system $\Sigma_{S}$ can be decomposed into two
subsystems $\Sigma_{subS}$ (\ref{subSystem1}) and $\Sigma_{subD}$
(\ref{subSystem2}) using the decomposition transformation
$\boldsymbol{\Phi}(\bm{x})$. Further, suppose that there exist a
{radially unbounded} storage function
$V(\bar{\bm{x}}_{1})\in\mathscr{C}^{2}(\mathbb{R}^{n_1};\mathbb{R}_{\geq
0})$ suggesting $\Sigma_{subS}$ to be stochastic weak passivity with
the stochastic passive radius $R$ and the desired state
$\bar{\bm{x}}^\dag_{1}$, and a negative implicit proportional
controller
$\bm{u}(\bar{\bm{x}}_{1})=-\bm{K}\bm{y}_{1}(\bar{\bm{x}}_{1},\bm{u})$
so that
\begin{itemize}
  \item $\|\bm{y}_{1}(\bar{\bm{x}}_{1},\bm{u})\|_{2}$ is bounded away from $0$ when
  $\|\bar{\bm{x}}_{1}-\bar{\bm{x}}^\dag_{1}\|>R$; Or the system is strictly state stochastic weak passive.
  \item $\exists~\epsilon\textgreater 0,~rank(\bm{h}_{1}(\bar{\bm{x}}_{1},\bm{u})\bm{h}_{1}^{\top}(\bar{\bm{x}}_{1},\bm{u}))=n_1$ when $\|\bar{\bm{x}}_{1}-\bar{\bm{x}}^\dag_{1}\|_2\textless R+\epsilon$.
\end{itemize}
Then there exist an invariant measure $\pi$ and a corresponding
manifold $\mathbb{R}^{n}_{\pi}$ such that for any Borel subset
$\mathbb{A}$ with zero $\pi$-measure boundary
\begin{equation*}
\lim_{t\to\infty}\mathcal{P}\big(t,\bm{x}(0),\mathbb{A}\big)=\pi(\mathbb{A}),~\forall~\bm{x}(0)\in\mathbb{R}^{n}_{\pi}
\end{equation*}
{and for any Borel subset $\mathbbold{B}$}
\begin{equation*}
\lim_{T\to\infty} \frac{1}{T}\int_{0}^{T} \mathbbold{1}_{\left\{\bm{x}(t)\in \mathbb{B}\right\}} dt =\pi(\mathbb{B}),~~~\mathrm{a.s.}~~\forall~
\bm{x}(0)\in\mathbb{R}^{n}_{\pi}
\end{equation*}
when the controller $\bm{u}(\bar{\bm{x}}_{1})$ is connected with
$\Sigma_{subS}$ in feedback. That is to say $\Sigma_{S}$ being
locally asymptotically weakly stable.
\end{thm}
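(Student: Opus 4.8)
The plan is to reduce the full $n$-dimensional problem to the $n_1$-dimensional stochastic subsystem $\Sigma_{subS}$ and then transport the conclusions back through the homeomorphism $\boldsymbol{\Phi}$. First I would observe that, once the feedback $\bm{u}(\bar{\bm{x}}_1)=-\bm{K}\bm{y}_1(\bar{\bm{x}}_1,\bm{u})$ is substituted, the closed loop of $\Sigma_{subS}$ in (\ref{subSystem1}) is an autonomous It\^o equation on $\mathbb{R}^{n_1}$ of exactly the form (\ref{StochasticEquation}); its drift and diffusion depend only on $\bar{\bm{x}}_1$ because $\bar{\bm{x}}_2$ is frozen by (\ref{subSystem2}), and $\Sigma_{subD}$ contributes a stable fixed point. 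The two hypotheses listed in the theorem are precisely the running hypotheses of \textit{Theorem} \ref{StochasticWeakPassivityTheorem} (under the ``bounded away from zero'' branch) or \textit{Theorem} \ref{StochasticWeakPassivityTheorem2} (under the strict-state-passive branch), now read in dimension $n_1$. Hence those theorems apply verbatim and yield a unique finite invariant measure $\pi_1$ on $\mathbb{R}^{n_1}$ such that the transition measure $\mathcal{P}_1(t,\bar{\bm{x}}_1(0),\cdot)$ converges to $\pi_1$ on Borel sets of zero $\pi_1$-boundary, and the time average of $\mathbbold{1}_{\{\bar{\bm{x}}_1(t)\in\cdot\}}$ converges almost surely to $\pi_1$, for every $\bar{\bm{x}}_1(0)\in\mathbb{R}^{n_1}$.

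Second, by \textit{Lemma} \ref{TransformeStateManifold} the full state satisfies $\bm{x}(t)\in\mathbb{R}^n_{\bar{\bm{x}}_2(0)}:=\boldsymbol{\Phi}^{-1}\big(\mathbb{R}^{n_1}\otimes\{\bar{\bm{x}}_2(0)\}\big)$ for all $t$, so the entire trajectory lives on the single leaf selected by the initial value $\bar{\bm{x}}_2(0)$. I would therefore set $\mathbb{R}^n_\pi:=\mathbb{R}^n_{\bar{\bm{x}}_2(0)}$ and define $\pi$ as the push-forward of $\pi_1$ under the continuous injection $\bar{\bm{x}}_1\mapsto\boldsymbol{\Phi}^{-1}(\bar{\bm{x}}_1,\bar{\bm{x}}_2(0))$; this $\pi$ is a finite invariant measure supported on $\mathbb{R}^n_\pi$.

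Third, for the distributional limit I would use the slicing identity: for any Borel $\mathbb{A}\subseteq\mathbb{R}^n$, writing $\mathbb{A}_1:=\{\bar{\bm{x}}_1~|~\boldsymbol{\Phi}^{-1}(\bar{\bm{x}}_1,\bar{\bm{x}}_2(0))\in\mathbb{A}\}$, one has $\{\bm{x}(t)\in\mathbb{A}\}=\{\bar{\bm{x}}_1(t)\in\mathbb{A}_1\}$, whence $\mathcal{P}(t,\bm{x}(0),\mathbb{A})=\mathcal{P}_1(t,\bar{\bm{x}}_1(0),\mathbb{A}_1)$ and $\pi(\mathbb{A})=\pi_1(\mathbb{A}_1)$. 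Because $\boldsymbol{\Phi}$ is a homeomorphism, the slice map is a homeomorphism of $\mathbb{R}^{n_1}$ onto $\mathbb{R}^n_\pi$, so it carries the relative boundary of $\mathbb{A}$ to the boundary of $\mathbb{A}_1$; thus $\pi(\partial\mathbb{A})=0$ forces $\pi_1(\partial\mathbb{A}_1)=0$, and the $n_1$-dimensional convergence transfers to the claimed convergence of $\mathcal{P}(t,\bm{x}(0),\mathbb{A})$ for all $\bm{x}(0)\in\mathbb{R}^n_\pi$. The ergodic statement is even more direct: since $\mathbbold{1}_{\{\bm{x}(t)\in\mathbb{B}\}}=\mathbbold{1}_{\{\bar{\bm{x}}_1(t)\in\mathbb{B}_1\}}$ pathwise, the two Birkhoff time averages coincide, and $\pi(\mathbb{B})=\pi_1(\mathbb{B}_1)$ closes the argument with no boundary hypothesis needed.

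The main obstacle is the measure-theoretic bookkeeping around the homeomorphism rather than any new probabilistic estimate: one must verify that ``zero $\pi$-boundary'' sets in $\mathbb{R}^n$ pull back to ``zero $\pi_1$-boundary'' slices in $\mathbb{R}^{n_1}$ (which uses that the relative boundary within $\mathbb{R}^n_\pi$ is contained in the ambient boundary), and one must be explicit that convergence and ergodicity hold only on the leaf $\mathbb{R}^n_\pi$ fixed by $\bar{\bm{x}}_2(0)$. This leaf-dependence is exactly why the conclusion is \emph{local} asymptotic weak stability, with both $\pi$ and $\mathbb{R}^n_\pi$ determined by the initial datum $\bar{\bm{x}}_2(0)$.
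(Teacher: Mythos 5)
Your proposal is correct and follows essentially the same route as the paper: apply the (global) stochastic weak passivity theorems to the closed-loop subsystem $\Sigma_{subS}$ to obtain $\pi_1$ on $\mathbb{R}^{n_1}$, then transfer both the convergence and the ergodicity to the original coordinates through $\boldsymbol{\Phi}$ using the fact that the trajectory stays on the leaf fixed by $\bar{\bm{x}}_2(0)$, with the same boundary bookkeeping (relative boundary of the slice is contained in the ambient boundary). The only cosmetic difference is that the paper realizes the invariant measure as the product $\pi_1\times\delta_{\bm{z}}$ in transformed coordinates before pulling back, whereas you push $\pi_1$ forward directly along the leaf embedding; these yield the same measure.
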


\begin{proof}
{First, we consider the existence of invariant
measure under the transformed coordinate $\bar{\bm{x}}$.}

{For $\Sigma_{subS}$ connected by the controller
$\bm{u}(\bar{\bm{x}}_{1})$ in feedback, by stochastic weak passivity theorems (\textit{Theorem}
\ref{StochasticWeakPassivityTheorem} and \textit{Theorem} \ref{StochasticWeakPassivityTheorem2}) there exists an unique finite
invariant measure $\pi_{1}$ so that for any Borel subset
$\mathbb{A}_{1}\subset\mathbb{R}^{n_{1}}$ with boundary
$\Gamma_{1}$, when $\pi_{1}(\Gamma_{1})=0$ we have
\begin{equation}{\label{eq convergence of pi1}}
\lim_{t\to\infty}
\mathcal{P}(t,\bar{\bm{x}}_{1}(0),\mathbb{A}_{1})=\pi_{1}(\mathbb{A}_{1}),~\forall~
\bar{\bm{x}}_{1}(0)\in\mathbb{R}^{n_{1}}
\end{equation}}
{and for any Borel subset $\mathbbold{B}_{1}\subset\mathbb{R}^{n_{1}}$}
\begin{equation*}
\lim_{T\to\infty} \frac{1}{T}\int_{0}^{T} \mathbbold{1}_{\left\{\bar{\bm{x}}_{1}(t)\in \mathbb{B}_{1}\right\}} \dd t =\pi_{1}(\mathbb{B}_{1}), ~~~\mathrm{a.s.}~~\forall~
\bar{\bm{x}}_{1}(0)\in\mathbb{R}^{n_{1}}
\end{equation*}

For $\Sigma_{subD}$, define $\pi_{2}$ be a measure on
\big($\mathbb{R}^{n_{2}}$,$\mathcal{B}(\mathbb{R}^{n_{2}})$\big)
that satisfies
\begin{equation*} \pi_{2}(\mathbb{A}_2)= \left\{
\begin{array}{ll}
1 &{\bm{z}} \in\mathbb{A}_{2}\\
0 &{\bm{z}} \notin \mathbb{A}_{2}
\end{array}
\right.
\end{equation*}
where ${\bm{z}}$ is a fixed point in $\mathbb{R}^{n_{2}}$ and $\mathbb{A}_2$ is a Borel subset of $\mathbb{R}^{n_{2}}$, then for all $t$ and the initial condition $\bm{z}$ we
have
\begin{equation*}
\mathcal{P}\big(t,\bar{\bm{x}}_{2}(0),\mathbb{A}_{2}\big)=\text{P}\big(\bar{\bm{x}}_{2}(t)\in
\mathbb{A}_2 | \bar{\bm{x}}_{2}(0)=\bm{z}\big)=\pi_{2}(\mathbb{A}_{2})
\end{equation*}

Consider the product measure of $\pi_{1}$ and $\pi_{2}$, denoted by
$\pi_{3}$, i.e.,
$$\pi_{3}(\star\otimes
*)=\pi_{1}(\star)\times\pi_{2}(*)$$
Note that the existence and uniqueness of $\pi_{3}$ are guaranteed
by Hahn-Kolmogorov theorem and $\sigma$-finite property,
respectively. Imposing $\pi_{3}$ on the set
$\mathbb{R}^{n_1}\otimes\big\{\bm{z}\big\}^{c}$ yields
\begin{equation}{\label{Pi3Zeromeasure}}
\pi_{3}\Big(\mathbb{R}^{n_1}\otimes\big\{\bm{z}\big\}^{c}\Big)=\pi_{1}\Big(\mathbb{R}^{n_1}\Big)\times\pi_{2}\Big(\big\{\bm{z}\big\}^{c}\Big)=0.
\end{equation}
where $\big\{\bm{z}\big\}^{c}$ is the complementary set
of $\big\{\bm{z}\big\}$ in $\mathbb{R}^{n_2}$.

For any Borel subset $\mathbb{A}\subset\mathbb{R}^{n}$, we could
express it as
$$\mathbb{A}=\Bigg[\mathbb{A}\cap\Big(\mathbb{R}^{n_{1}}\otimes\big\{\bm{z}\big\}\Big)\Bigg]
    \cup
    \Bigg[\mathbb{A}\cap\Big(\mathbb{R}^{n_{1}}\otimes\big\{\bm{z}\big\}\Big)^{c}\Bigg]$$
which may be further rewritten by defining a map
$\boldsymbol{\Psi}(\bar{\bm{x}}_{1}\otimes\bar{\bm{x}}_{2})\triangleq\bar{\bm{x}}_{1}$
as
\begin{eqnarray}{\label{eq divide A}}
\mathbb{A}&=&
\Bigg[\boldsymbol{\Psi}(\mathbb{A})\otimes\big\{\bm{z}\big\}\Bigg]
     \cup
     \Bigg[\mathbb{A}\cap\Big(\mathbb{R}^{n_{1}}\otimes\big\{\bm{z}\big\}\Big)^{c}\Bigg]
\end{eqnarray}
Here,
$\boldsymbol{\Psi}(\mathbb{A})=\left\{\boldsymbol{\Psi}(\bar{\bm{x}}_{1}\otimes\bar{\bm{x}}_{2})
\big| \bar{\bm{x}}_{1}\otimes\bar{\bm{x}}_{2} \in
\mathbb{A}\cap\left(\mathbb{R}^{n_{1}}\otimes\big\{\bm{z}\}\right)\right\}$.
Then we have
\begin{eqnarray}{\label{PI3TransitionMeasure}}
&&\mathcal{P}(t,\bar{\bm{x}},\mathbb{A})\notag\\
&=&\mathcal{P}\Big(t,\bar{\bm{x}},\big[\boldsymbol{\Psi}(\mathbb{A})\otimes\big\{\bm{z}\big\}\big]
     \cup
     \big[\mathbb{A}\cap\big(\mathbb{R}^{n_{1}}\otimes\big\{\bm{z}\big\}\big)^{c}\big]\Big)\notag\\
&=&\mathcal{P}\Big(t,\bar{\bm{x}},\boldsymbol{\Psi}(\mathbb{A})\otimes\big\{\bm{z}\big\}\Big)+\mathcal{P}\Big(t,\bar{\bm{x}},\mathbb{A}\cap\big(\mathbb{R}^{n_{1}}\otimes\big\{\bm{z}\big\}\big)^{c}\Big)
\end{eqnarray}
Note that for all $\bar{\bm{x}}(0)\in\mathbb{R}^{n_1}\otimes\{\bm{z}\}$ the second term in the above equality satisfies
\begin{eqnarray}{\label{ineq transition measure equals 0}}
&&\mathcal{P}\Bigg(t,\bar{\bm{x}}(0),\mathbb{A}\cap\Big(\mathbb{R}^{n_{1}}\otimes\big\{\bm{z}\big\}\Big)^{c}\Bigg)\notag\\
&\leq&\mathcal{P}\Big(t,\bar{\bm{x}}(0),\mathbb{R}^{n_{1}}\otimes\big\{\bm{z}\big\}^{c}\Big)\notag\\
&=&\mathcal{P}\Big(\bar{\bm{x}}_{2}(t)\notin\big\{\bm{z}\big\}\Big)\notag\\
&=&0
\end{eqnarray}
then we have
\begin{eqnarray*}
&&\int_{\bar{\bm{x}}\in\mathbb{R}^{n}}\mathcal{P}(t,\bar{\bm{x}},\mathbb{A})\pi_{3}(d\bar{\bm{x}})\\
&\xlongequal[Lemma~
\ref{TransformeStateManifold}]{\text{Eqs.~(\ref{PI3TransitionMeasure})~\&~(\ref{ineq
transition measure equals 0})}}&
\int_{\bar{\bm{x}}\in\mathbb{R}^{n_{1}}\otimes\{\bm{z}\}}\mathcal{P}\Big(t,\bar{\bm{x}},\boldsymbol{\Psi}(\mathbb{A})\otimes\big\{\bm{z}\big\}\Big)\pi_{3}(d\bar{\bm{x}})\\
&\xlongequal{\text{~\quad \quad \quad \quad\quad\quad \quad
\quad~}}&
\int_{\bar{\bm{x}}_{1}\in\mathbb{R}^{n_{1}}}\mathcal{P}\Big(t,\bar{\bm{x}}_{1},\boldsymbol{\Psi}(\mathbb{A})\Big)\pi_{1}(d\bar{\bm{x}}_{1})\\
&\xlongequal{~~~~~\text{Eq. (\ref{Def-InvariantMeasure})}~~~~~~}& \pi_{1}\big(\boldsymbol{\Psi}(\mathbb{A})\big)\\
&\xlongequal{\text{Eqs.~(\ref{eq divide A})~\&~(\ref{ineq transition
measure equals 0})}}& \pi_{3}(\mathbb{A})
\end{eqnarray*}
Hence, $\pi_{3}$ is invariant under the coordinate $\bar{\bm{x}}$.

{Next, we discuss the convergency of $\pi_{3}$ under
the transformed coordinate $\bar{\bm{x}}$.}

Let the boundary of
$\boldsymbol{\Psi}(\mathbb{A})\subset\mathbb{R}^{n_1}$ be
$\Gamma_{1}$, then for $\forall~ \bar{\alpha}\in \Gamma_1$ there
exist two sequences of points $\{\bar{\beta}_{i}\}_{i=1}^\infty$
\big($\bar{\beta}_{i}\notin\boldsymbol{\Psi}(\mathbb{A})$\big) and
$\{\bar{\gamma}_{i}\}_{i=1}^\infty$
\big($\bar{\gamma}_{i}\in\boldsymbol{\Psi}(\mathbb{A})$\big) such
that
$$\lim_{i\to\infty}\bar{\beta}_{i}=\lim_{i\to\infty}\bar{\gamma}_{i}=\bar{\alpha}$$
Hence, for any point $\bar{\alpha}\otimes\bm{z}$ in
$\Gamma_{1}\otimes\big\{\bm{z}\big\}$, there exist two
sequences of points
$\big\{\bar{\beta}_i\otimes\bm{z}\}_{i=1}^\infty
\big(\bar{\beta}_{i}\otimes\bm{z}\notin
\mathbb{A}\big)$ and
$\big\{\bar{\gamma}_{i}\otimes\bm{z}\big\}_{i=1}^\infty
\big(\bar{\gamma}_{i}\otimes\bm{z}\in \mathbb{A}\big)$
such that
$$\lim_{i\to\infty}\bar{\beta}_{i}\otimes\bm{z}=\lim_{i\to\infty}\bar{\gamma}_{i}\otimes\bm{z}=\bar{\alpha}\otimes\bm{z}$$
Further let $\Gamma$ denote the boundary of $\mathbb{A}$, then we
have $\Gamma_{1}\otimes\{\bm{z}\}\subset\Gamma$ and
$$\pi_{3}(\Gamma)=\pi_{1}(\Gamma_{1})$$

Assume $\pi_{3}(\Gamma)=0$, i.e., $\pi_{1}(\Gamma_{1})=0$, with
which we get for
$\forall~\bar{\bm{x}}(0)=\bar{\bm{x}}_{1}(0)\otimes\bar{\bm{x}}_{2}(0)\in
\mathbb{R}^{n_1}\otimes\{\bm{z}\}$
\begin{eqnarray*}
&&\lim_{t\to\infty}\mathcal{P}\big(t,\bar{\bm{x}}(0),\mathbb{A}\big)\\
&\xlongequal{\text{Eqs.~(\ref{eq divide A})~\&~(\ref{ineq transition measure equals 0})}}& \lim_{t\to\infty}\mathcal{P}\Big(t,\bar{\bm{x}}(0),\boldsymbol{\Psi}(\mathbb{A})\otimes\big\{\bm{z}\}\Big)\\
&\xlongequal{~~~~~~\quad \quad \quad \quad\quad}&
\lim_{t\to\infty}\mathcal{P}\big(t,\bar{\bm{x}}_{1}(0),\boldsymbol{\Psi}(\mathbb{A})\big)\\
&\xlongequal{~~~\quad\text{Eq.~(\ref{eq convergence of pi1})}\quad~~~}& \pi_{1}\big(\boldsymbol{\Psi}(\mathbb{A})\big)\\
&\xlongequal{\text{Eqs.~(\ref{eq divide A})~\&~(\ref{ineq transition
measure equals 0})}}& \pi_{3}(\mathbb{A})
\end{eqnarray*}
This complete the proof of the convergence of $\pi_3$.

{Finally, we consider the existence and convergence
of invariant measure under original coordinate.}

Let a measure $\pi$ satisfy
$\pi(\mathbb{A})=\pi_{3}(\boldsymbol{\Phi}(\mathbb{A}))$ for all
$\mathbb{A}\in\mathcal{B}(\mathbb{R}^{n})$. We have
\begin{eqnarray*}
\int_{\bm{x}\in\mathbb{R}^{n}}\mathcal{P}(t,\bm{x},\mathbb{A})\pi(d\bm{x})
=\int_{\bar{\bm{x}}\in\mathbb{R}^{n}}\mathcal{P}\big(t,\bar{\bm{x}},\boldsymbol{\Phi}(\mathbb{A})\big)\pi_{3}(d\bar{\bm{x}})
=\pi_{3}\big(\boldsymbol{\Phi}(\mathbb{A})\big) =\pi(\mathbb{A})
\end{eqnarray*}
which means $\pi$ is invariant. Further let $\Gamma_{0}$ denote the
boundary of $\boldsymbol{\Phi}(\mathbb{A})$, then
$\boldsymbol{\Phi}^{-1}(\Gamma_{0})\subset\Gamma$ due to the
bicontinuity of $\boldsymbol\Phi$. Thus, if we assume $\pi(\Gamma)=0$,
then
$$\pi_3(\Gamma_{0})=\pi\big(\boldsymbol{\Phi}^{-1}(\Gamma_{0})\big)=0$$
Hence, for $\forall~\bm{x}(0)\in
\boldsymbol{\Phi}^{-1}\big(\mathbb{R}^{n_1}\otimes\{\bm{z}\}\big)=\mathbb{R}^{n}_{\bm{z}}(\text{denoted as}\mathbb{R}^{n}_{\pi})$
\begin{eqnarray*}
\lim_{t\to\infty}\mathcal{P}\big(t,\bm{x}(0),\mathbb{A}\big)&=&\lim_{t\to\infty}\mathcal{P}\big(t,\bar{\bm{x}}(0),\boldsymbol{\Phi}(\mathbb{A})\big)
=\pi_{3}\big(\boldsymbol{\Phi}(\mathbb{A})\big) =\pi(\mathbb{A})
\end{eqnarray*}
which shows the convergence of the transition measure.

{Similarly, we can prove the local ergodicity of the process.}
\end{proof}

\begin{rem}
{A point should be noted that the current invariant
measure is no longer unique. It is closely dependent on the initial
condition $\bm{x}(0)$. Hence, the refined stochastic weak passivity
theorem actually suggests the conditions of local asymptotic weak stability for
a stochastic differential system. }
\end{rem}

\section{Applications} In this section, the stochastic weak passivity
theory is applied to linear systems and a nonlinear process system.
\subsection{Application to linear systems}
Consider a representative linear time-invariant system described by
\begin{equation}\label{LinearSystem}
\left\{
\begin{array}{ccl}
\dd \bm{x} &=&(\bm{A}\bm{x}+\bm{B}\bm{u})\dd t + \boldsymbol{\sigma} \dd \boldsymbol{\omega}\\
\bm{y}     &=& \bm{C}\bm{x}
\end{array}
\right.
\end{equation}
where $\bm{A},~\bm{B},~\bm{C}$ and $\boldsymbol{\sigma}\neq 0$ are
constant matrices with suitable dimensions. For simplicity let
$\mathbbold{0}_{n}$ be the desired state.

Since the noise port $\boldsymbol{\sigma}\neq 0$, there does not
exist any Lyapunov function that could suggest this linear
system to be {globally} stable in the sense of probability. However, it is
possible for this system to reach stochastic asymptotic weak stability.

\begin{thm}
For a linear system described by Eq. (\ref{LinearSystem}), if there
exists a positive definite matrix $\bm{D}$ with suitable dimension
such that
\begin{itemize}
  \item $\bm{C}=\bm{B}^\top\bm{D}$;
  \item $\bm{DA}+\bm{A}^\top\bm{D}$ is negative definite.
\end{itemize}
then the system is stochastically weak passive.
\end{thm}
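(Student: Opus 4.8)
The plan is to produce an explicit quadratic storage function and then to read off the stochastic passive radius from a single application of the generator. Because $\bm{D}$ is positive definite, the natural candidate is $V(\bm{x})=\tfrac{1}{2}\bm{x}^\top\bm{D}\bm{x}$, which lies in $\mathscr{C}^2(\mathbb{R}^n;\mathbb{R}_{\geq 0})$ and has its unique minimum at $\bm{x}_R=\mathbbold{0}_n$, exactly the designated desired state. By \textit{Definition} \ref{DefStochasticWeakPassivity} it then suffices to exhibit a radius $R\geq 0$ for which $\mathcal{L}[V(\bm{x})]\leq\bm{u}^\top\bm{y}$ holds on $\{\|\bm{x}\|_2\geq R\}$.

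First I would apply the generator \eqref{InfinitesimalGenerator}. Since $\bm{f}=\bm{A}\bm{x}+\bm{B}\bm{u}$ and the diffusion term $\bm{h}=\boldsymbol{\sigma}$ is constant, the gradient $\partial V/\partial\bm{x}=\bm{x}^\top\bm{D}$ and Hessian $\partial^2 V/\partial\bm{x}^2=\bm{D}$ yield
\begin{equation*}
\mathcal{L}[V(\bm{x})]=\bm{x}^\top\bm{D}(\bm{A}\bm{x}+\bm{B}\bm{u})+\tfrac{1}{2}\mathrm{tr}\{\bm{D}\boldsymbol{\sigma}\boldsymbol{\sigma}^\top\}.
\end{equation*}
The next step is to extract the supply rate. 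Symmetrizing gives $\bm{x}^\top\bm{D}\bm{A}\bm{x}=\tfrac{1}{2}\bm{x}^\top(\bm{D}\bm{A}+\bm{A}^\top\bm{D})\bm{x}$, and the first hypothesis $\bm{C}=\bm{B}^\top\bm{D}$ together with the symmetry of $\bm{D}$ turns the cross term into $\bm{x}^\top\bm{D}\bm{B}\bm{u}=(\bm{C}\bm{x})^\top\bm{u}=\bm{u}^\top\bm{y}$, so that
\begin{equation*}
\mathcal{L}[V(\bm{x})]=\bm{u}^\top\bm{y}+\tfrac{1}{2}\bm{x}^\top(\bm{D}\bm{A}+\bm{A}^\top\bm{D})\bm{x}+\tfrac{1}{2}\mathrm{tr}\{\bm{D}\boldsymbol{\sigma}\boldsymbol{\sigma}^\top\}.
\end{equation*}

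To finish I would invoke the second hypothesis. Setting $\bm{Q}:=-(\bm{D}\bm{A}+\bm{A}^\top\bm{D})$, which is positive definite, the drift contribution is bounded above by $-\tfrac{1}{2}\lambda_{\min}(\bm{Q})\|\bm{x}\|_2^2$, so the two residual terms are nonpositive as soon as $\|\bm{x}\|_2^2\geq\mathrm{tr}\{\bm{D}\boldsymbol{\sigma}\boldsymbol{\sigma}^\top\}/\lambda_{\min}(\bm{Q})$. Choosing $R=\sqrt{\mathrm{tr}\{\bm{D}\boldsymbol{\sigma}\boldsymbol{\sigma}^\top\}/\lambda_{\min}(\bm{Q})}$ then delivers $\mathcal{L}[V(\bm{x})]\leq\bm{u}^\top\bm{y}$ for every $\|\bm{x}\|_2\geq R$, which is exactly stochastic weak passivity with this radius and $\bm{x}_R=\mathbbold{0}_n$.

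The calculation itself is routine, so the real content is interpretive rather than a technical obstacle. The nonvanishing noise injects the strictly positive constant $\tfrac{1}{2}\mathrm{tr}\{\bm{D}\boldsymbol{\sigma}\boldsymbol{\sigma}^\top\}$ that no choice of $\bm{x}$ near the origin can offset; this is precisely why $R$ must be positive and why ordinary stochastic passivity (the same inequality on all of $\mathbb{R}^n$, i.e.\ $R=0$) is unattainable here, in agreement with \textit{Corollary} \ref{Vanish3}. The passive radius is simply the threshold beyond which the dissipation furnished by $\bm{Q}$ dominates this constant noise injection.
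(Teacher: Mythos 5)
Your proposal is correct and follows essentially the same route as the paper: the quadratic storage function $V(\bm{x})=\tfrac{1}{2}\bm{x}^\top\bm{D}\bm{x}$, the identical generator computation isolating $\bm{u}^\top\bm{y}$, and the same eigenvalue bound on $\bm{D}\bm{A}+\bm{A}^\top\bm{D}$ yielding the radius $R=\sqrt{\mathrm{tr}\{\bm{D}\boldsymbol{\sigma}\boldsymbol{\sigma}^\top\}/(-\lambda_{\max})}$, which coincides with your $\sqrt{\mathrm{tr}\{\bm{D}\boldsymbol{\sigma}\boldsymbol{\sigma}^\top\}/\lambda_{\min}(\bm{Q})}$. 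The closing interpretive paragraph is a sound addition but not part of the paper's argument.
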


\begin{proof}
Let $V(\bm{x})=\frac{1}{2}\bm{x}^\top\bm{D}\bm{x}$, then
\begin{eqnarray*}
\mathcal{L}[V(\bm{x})]
&=&\bm{x}^\top\bm{D}(\bm{Ax}+\bm{Bu})+\frac{1}{2}\text{tr}\{\bm{D}\boldsymbol{\sigma}\boldsymbol{\sigma}^\top\} \\
&=&\bm{x}^{\top}\bm{D}\bm{Bu}+\bm{x}^{\top}\bm{DA}\bm{x}
+\frac{1}{2}\text{tr}\left\{\bm{D}\boldsymbol{\sigma}\boldsymbol{\sigma}^\top\right\}\\
&=&
\bm{y}^{\top}\bm{u}+\frac{1}{2}\bm{x}^{\top}(\bm{DA}+\bm{A}^\top\bm{D})\bm{x}
+\frac{1}{2}\text{tr}\left\{\bm{D}\boldsymbol{\sigma}\boldsymbol{\sigma}^\top\right\}
\end{eqnarray*}
Assume $\lambda_{\text{max}}$ to be the maximum eigenvalue of
$\bm{DA}+\bm{A}^\top\bm{D}$. Since the matrix is negative definite,
we have $\lambda_{\text{max}}\textless 0$ and
$\frac{1}{2}\bm{x}^{\top}(\bm{DA}+\bm{A}^\top\bm{D})\bm{x}\leq
\frac{1}{2}\lambda_{\text{max}}\|\bm{x}\|^{2}_2$. Note that
$\text{tr}\left\{\bm{D}\boldsymbol{\sigma}\boldsymbol{\sigma}^\top\right\}=\text{tr}\left\{\boldsymbol{\sigma}^\top\bm{D}\boldsymbol{\sigma}\right\}\geq
0$. Hence, as long as $\|\bm{x}\|_2\geq
\sqrt{\frac{\text{tr}\left\{\bm{D}\boldsymbol{\sigma}\boldsymbol{\sigma}^\top\right\}}{-\lambda_\text{max}}}$,
we get
\begin{eqnarray*}
\mathcal{L}[V(\bm{x})]&\leq&\bm{y}^{\top}\bm{u}
\end{eqnarray*}
which completes the proof.
\end{proof}

Clearly, the used Lyapunov function
$V(\bm{x})=\frac{1}{2}\bm{x}^\top\bm{D}\bm{x}$ is {radially unbounded}, so if the
noise port $\boldsymbol{\sigma}$ is full row rank and the
measurement matrix $\bm{C}$ is full column rank, then all conditions
required in \textit{Theorem} \ref{StochasticWeakPassivityTheorem}
are true. Namely, any negative proportional controller
$\bm{u}(\bm{x})=-\bm{K}\bm{y}(\bm{x},\bm{u})$ can globally stabilize
this class of stochastic linear systems in weak sense.

\subsection{Applications to a nonlinear process system}
Next, we manage to analyze the stochastic weak passivity of a
nonlinear process system and further stabilize it based on the
refined stochastic weak passivity theorem.

Consider a continuous stirred tank reactor (CSTR) in which a
first-order chemical reaction takes place
$$X_1\autorightarrow{}{}X_2$$
The pure component $X_1$ with a fixed concentration
$C_1^\text{in}$ is fed at the inlet of the reactor while the fluid
mixture of $X_1$ and $X_2$ is released from the outlet of the
reactor. For simplicity this process is assumed to be isothermal and
isometric. Furthermore, the volume flow rates of inflow and outflow
are regulated to be the same. In addition, the chemical reaction
goes on with a unstable reaction rate coefficient. Denote the
components concentrations of $X_1$ and $X_2$ by $x_1$ and $x_2$,
respectively, the volume flow rate of inflow or outflow by $q$, the
reaction rate coefficient by $k$ and the disturbance on $k$ by
$\sigma$ (note that $\sigma\textgreater 0$ and $\sigma\ll k$), then
we have the dynamical equation
\begin{equation}\label{CSTRModel}
\left\{
\begin{array}{lll}
\dd x_{1}&=& \left[-k{x}_{1}+\left(C_1^\text{in}-{x}_{1}\right)q\right]\dd t -\sigma{x}_{1}\dd \omega \\
\dd x_{2}&=& \left[k{x}_{1}-{x}_{2}q\right] \dd t +\sigma{x}_{1}\dd
\omega
\end{array}
\right.
\end{equation}

Construct the input-output pair as
\begin{equation*}
\left\{
\begin{array}{lll}
u&=&q-\frac{k{x}_{1}^{\dag}}{C_1^\text{in}-{x}_{1}^{\dag}} \\
y&=&\left({x}_{1}-{x}_{1}^{\dag}\right)\left(C_1^\text{in}-{x}_{1}\right)
\end{array}
\right.
\end{equation*}
where ${x}_{1}^{\dag}$ is the desired concentration of $X_1$
constrained by $0\textless {x}_{1}^{\dag} \textless C_1^\text{in}$.
Then the input-output representation of the stochastic CSTR model
(\ref{CSTRModel}) can be written as
\begin{equation*}
\left\{
\begin{array}{ccl}
\dd \left(
\begin{array}{l}
{x}_{1}\\
{x}_{2}
\end{array}
\right) &=& \left(
\begin{array}{c}
-k{x}_{1}+\frac{k{x}_{1}^{\dag}\left(C_1^\text{in}-{x}_{1}\right)}{C_1^\text{in}-{x}_{1}^{\dag}}
+\left(C_1^\text{in}-{x}_{1}\right)u\\
k{x}_{1}-\frac{k{x}_{1}^{\dag}{x}_{2}}{C_1^\text{in}-{x}_{1}^{\dag}}-{x}_{2}u
\end{array}
\right) \dd t +\left(
\begin{array}{c}
-\sigma{x}_{1}\\
\sigma{x}_{1}
\end{array}
\right)
\dd \omega\\
y&=&\left({x}_{1}-{x}_{1}^{\dag}\right)\left(C_1^\text{in}-{x}_{1}\right)
\end{array}
\right.
\end{equation*}
Apparently, at the desired state the diffusion term will not vanish.
Therefore, it is impossible to find a {radially unbounded} Lyapunov function so
that the stochastic CRTR system is stable at the desired point in
the sense of probability.

{According to the conservation law, it is easy to get that the state $(x_1,x_2)^\top$ only evolves in the manifold $\left\{\left(x_{1},C_{1}^\text{in}-x_{1}\right), \forall x_{1}\right\}$. Therefore, based on \textit{Remark} \mbox{\ref{rem explaination of how to find decompation transformation}} the decomposition transformation can be defined as}
\begin{equation*}{\label{DeconmposeTransformationofNonlinearProcess}}
\bm{\Phi}(\bm{x}) = \left(
\begin{array}{c}
\bar{x}_{1}\\
\bar{x}_{2}
\end{array}
\right)= \left(
\begin{array}{c}
{x}_{1}\\
{x}_{1}+{x}_{2}
\end{array}
\right)
\end{equation*}
through which the system is decomposed into
\begin{eqnarray*}
\Sigma_{subS}:&~&\left\{
\begin{array}{lll}\label{NonlinearSubSystem1}
\dd \bar{{x}}_{1}&=&\left[-k\bar{{x}}_{1}+
\frac{k{x}_{1}^{\dag}\left(C_1^\text{in}-\bar{{x}}_{1}\right)}{C_1^\text{in}-{x}_{1}^{\dag}}
+\left(C_1^\text{in}-\bar{{x}}_{1}\right)u\right]\dd t -\sigma\bar{{x}}_{1}\dd \omega\\
{y}_1&=&\left(\bar{{x}}_{1}-{x}_{1}^{\dag}\right)\left(C_1^\text{in}-\bar{{x}}_{1}\right)
\end{array}
\right.
\end{eqnarray*} and
\begin{eqnarray*}
\Sigma_{subD}:&~&\left\{
\begin{array}{lll}\label{NonlinearSubSystem2}
\dd\bar{{x}}_{2}&=& (C_1^\text{in}-\bar{{x}}_{2})q\dd t \\
{y}_{2}&=&{s}_{2}(\bar{{x}}_{2},{u})
\end{array}
\right.
\end{eqnarray*}
Note the facts that
$C_1^\text{in}={x}_{1}(0)+{x}_{2}(0)=\bar{x}_2(0)$ and the volume of
inflow or outflow is finite within a finite interval, i.e.,
$\int_{0}^{t}q(\tau)\dd \tau\leq \infty~(\forall~t\textgreater 0)$,
so we get
\begin{equation*}
\bar{{x}}_{2}(t)=\bar{{x}}_2(0)+\text{e}^{-\int_{0}^{t}q(s)\dd
s}\left(\bar{{x}}_{2}(0)-\bar{{x}}_{2}(0)\right)=\bar{{x}}_2(0)
\end{equation*}
which means the subsystem $\Sigma_{subS}$ following
\begin{equation*}
\dd \bar{{x}}_{2}=0\dd t
\end{equation*}
Hence, the vector field of $\Sigma_{subD}$ equals zero and
$\boldsymbol{\Phi}(\bm{x})$ is a decomposition transformation of the
CSTR process system.

Construct the Lyapunov function to be
$V(\bar{{x}}_{1})=\frac{1}{2}\left(\bar{{x}}_{1}-{x}_{1}^{\dag}\right)^{2}$,
then we have
\begin{eqnarray*}
\mathcal{L}[V(\bar{{x}}_{1})]&=&\left(\bar{{x}}_{1}-{x}_{1}^{\dag}\right)
\left[-k\bar{{x}}_{1}+
\frac{k{x}_{1}^{\dag}\left(C_1^\text{in}-\bar{{x}}_{1}\right)}{C_1^\text{in}-{x}_{1}^{\dag}}
+\left(C_1^\text{in}-\bar{{x}}_{1}\right)u\right]+\frac{1}{2}\sigma^{2}\bar{{x}}_{1}^{2}\\
&=&
yu-\frac{kC_1^\text{in}}{\left(C_1^\text{in}-{x}_{1}^{\dag}\right)}\left(\bar{{x}}_{1}-{x}_{1}^{\dag}\right)^{2}
+\frac{1}{2}\sigma^{2}\bar{{x}}_{1}^{2}
\end{eqnarray*}
Let
$\delta=\frac{kC_1^\text{in}}{2\left(C_1^\text{in}-{x}_{1}^{\dag}\right)}$,
then the above equation changes to be
\begin{eqnarray*}
\mathcal{L}[V(\bar{{x}}_{1})] &=&
yu-2\delta\left(\bar{{x}}_{1}-{x}_{1}^{\dag}\right)^{2}+\frac{1}{2}\sigma^{2}\bar{{x}}_{1}^{2}
\end{eqnarray*}
Hence, for any $\|\bar{\bm{x}}_{1}-\bm{x}_{1}^{\dag}\| \geq R$ where
$$R=\frac{\sigma^2+\sigma\sqrt{2\delta}}{2\delta-\sigma^2}x_1^\dag$$ we have
\begin{eqnarray*}
\mathcal{L}[V(\bar{\bm{x}}_{1})] &\leq& yu
-\delta\|\bar{\bm{x}}_{1}-\bm{x}_{1}^{\dag}\|^{2}
\end{eqnarray*}
This means that the subsystem $\Sigma_{subD}$ is strictly state
stochastic weak passive with respect to the {radially unbounded} storage function
$V(\bar{{x}}_{1})$. Note that the stochastic passive radius $R$ is
quite small due to $\sigma\ll k$.

Additionally, there exists a positive constant $\epsilon$ such that
$0\textless\epsilon<\text{min}\{{x}_1^{\dag}-{R},C_1^\text{in}-R-{x}_{1}^{\dag}\}$
and for any $\|\bar{{x}}_{1}-{x}_{1}^{\dag}\| \textless
{R}+\epsilon$ the noise port $\sigma^{2}\bar{{x}}_{1}^{2}$ is
nonsingular. Thus, based on \textit{Theorem} \ref{RefinedSWPT}, the
system can be locally stochastically asymptotically weakly stable under any
negative proportional controller.

\begin{figure}
  \centering
  % Requires \usepackage{graphicx}
  \includegraphics[width=1\textwidth]{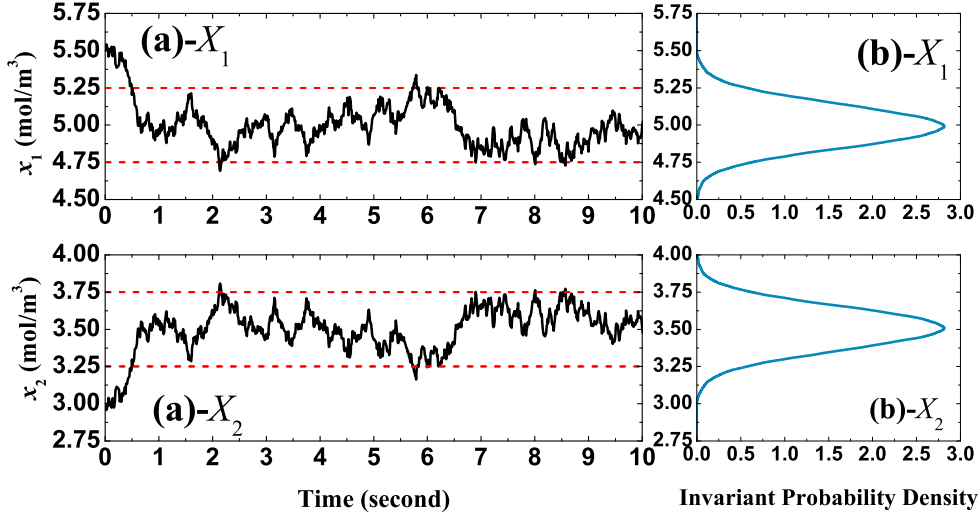}
  \caption{Time evolution (a) of the state of the CSTR process and the corresponding invariant probability density (b) without controller implemented. The bounded areas in (a) are $5.0\pm 0.25$ for $x_1$ and $3.5\pm 0.25$ for $x_2$.}
\end{figure}

{To better exhibit the stochastic asymptotic weak stability, some simulations are made on the above CSTR process with the initial state designated as $\bm{x}(0)=(5.5,3)^{\top}$, the desired state as $\bm{x}^{\dag}=(5,3.5)^{\top}$, and the disturbance as $\sigma=0.03$. The other parameters are $k=1$ mole/m$^3$/s, $q(0)=0.33$ m$^3$/s and $C_1^\text{in}=8.5$ mole/m$^3$. Fig. $1$ shows the time evolution of the state
$\bm{x}(t)=(x_1(t),x_2(t))^\top$ without control and the corresponding invariant probability density functions. The ranges, $5.0\pm 0.25$ for $x_1$ and $3.5\pm 0.25$ for $x_2$, bounded by two dotted lines, respectively, represent the areas in which the state evolves with probability $90\%$. When the controller $u=-y_{1}$ is implemented on this process, the state evolution and the invariant probability density functions will fasten on the region around the desired state more. Shown in Fig. $2$ are the results. We also use two dotted lines to bound the areas in which the state evolves with $90\%$. Now, they change to be $5.0\pm 0.1$ for $x_1$ and $3.5\pm 0.1$ for $x_2$, which is clearly more intensive around the desired state. This information can be also observed from Fig. $2$(b) in which the invariant probability density function changes ``thinner" around the desired state than the corresponding one appearing in Fig. $1$(b). The convergent behaviors of the invariant probability density functions under the control are exhibited in Fig. $3$. To observe more detailed convergent process, we only exhibit the simulation from $t=0$ s to $t=3$ s. As can be seen in Fig. $3$, the initial probability density functions, at $t=0$, deviate the desired state too much, but they will converge to the invariant probability density functions that fasten on the small region around the desired state as the controller is put into force.}

\begin{figure}
  \centering
  % Requires \usepackage{graphicx}
  \includegraphics[width=1\textwidth]{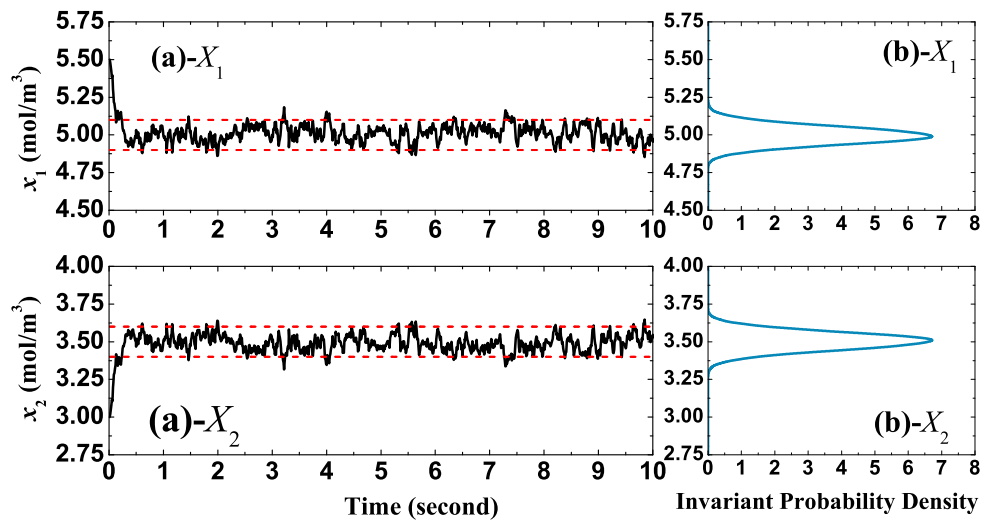}
    \caption{Time evolution (a) of the state of the CSTR process and the corresponding invariant probability density (b) with the controller $u=-y_{1}$ implemented. The bounded areas in (a) are $5.0\pm 0.1$ for $x_1$ and $3.5\pm 0.1$ for $x_2$.}
\end{figure}

\begin{figure}
  \centering
  % Requires \usepackage{graphicx}
  \includegraphics[width=1\textwidth]{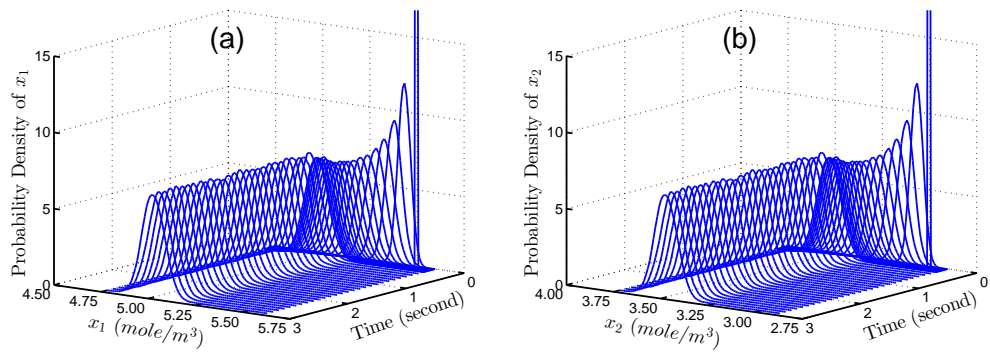}
  \caption{Convergent behaviors of probability density functions (a) for $X_1$ and (b) for $X_2$ with the controller $u=-y_{1}$ implemented.}
\end{figure}

%%%%%%%%%%%%%%%%%%%%%%%%%%%%%%%%%%%%%
\section{Conclusions and future research}
This work has presented a theoretical framework of stochastic weak
passivity serving for stabilizing the stochastic differential systems
with nonvanishing noise. The main contributions include: i) deriving the necessary
conditions to say a stochastic system stochastically passive or the
sufficient conditions that a stochastic system must lose stochastic
passivity; ii) proving that it is impossible for some stochastic
systems to be stabilized in probability; iii) defining a new concept
of stochastic weak passivity to serve for those systems losing
stochastic passivity, which captures the stochastic passivity of the
system not in the whole state space but only outside a ball centered
around the desired state; iv) associating stochastic weak passivity
to asymptotic weak stability of systems, and further providing the sufficient
conditions for global and local asymptotic weak stabilization of
nonlinear stochastic differential systems by means of negative
feedback laws.

The stochastic weak passivity provides an alternative way to stabilize the
transition measure {as well as capturing the ergodicity} of the stochastic differential systems with
nonvanishing noise. However, there is still a large room for this
method to be improved or expanded. {An important issue is that the whole theoretical framework works under the assumption that the stochastic term $\boldsymbol{\omega}$ is a standard Wiener process. The motivation of making such an assumption is that the current concept is developed based on the stochastic passivity. For the latter, the noise term is assumed as a standard Wiener process\mbox{\cite{Florchinger99}}. However, the standard Wiener process is just a kind of ideal noise, and is used mainly for the simplicity of analysis. As far as many practical systems are concerned, this ideal noise is not accurate enough to represent the internal modeling uncertainty. Therefore, it is interesting but challenging to use other stochastic processes instead of the standard Wiener process for developing stochastic weak passivity. Towards this task, the infinitesimal generator $\mathcal{L[\cdot]}$ of Eq. \mbox{(\ref{InfinitesimalGenerator})} needs to be redefined accordingly.} In addition, other possible points of future research include: i) weakening the condition of nonsigularity of the
diffusion matrix $\bm{h}_{1}(\bar{\bm{x}}_{1},\bm{u})\bm{h}_{1}^{\top}(\bar{\bm{x}}_{1},\bm{u})$;
ii) applying the stochastic weak passivity theory to some special
stochastic differential systems, such as stochastic affine systems,
thermodynamic process systems, and drive the development of these
fields in control techniques; iii) developing the determinist
version of stochastic weak passivity.

%%%%%%%%%%%%%%%%%%%%%%%%%%%%%%%%%%%%%%%%%%%%%%%%%%%%%%%%%%%%%%%%%%%%%%%%%%%%%%%%%%%%%%%%%%%
%\bibliographystyle{siam}
%\bibliography{adapmesh}

\pagestyle{myheadings}
\thispagestyle{plain}
\markboth{ ZHOU FANG, AND CHUANHOU GAO}{stochastic weak passivity}

\end{document}